\documentclass[10pt,a4paper]{article}
\usepackage[a4paper,top=3cm,bottom=3cm,left=2.5cm,right=2.5cm,%
bindingoffset=5mm]{geometry}
\pdfoutput=1
\usepackage[utf8]{inputenc}
\usepackage[english]{babel}
\usepackage{amsmath}
\usepackage{amsfonts}
\usepackage{amssymb}
\usepackage{graphicx}
\usepackage{authblk}
\usepackage{amsthm}
\usepackage{epstopdf}
\usepackage{tabularx}
\usepackage{multirow}
\usepackage{booktabs}
\usepackage{caption}
\usepackage{subcaption}
\usepackage[percent]{overpic}
\newcommand{\numberset}{\mathbb}

\newtheorem{theorem}{Theorem}[section]

\newtheorem{lemma}[theorem]{Lemma}
\newtheorem{remark}[theorem]{Remark}
\newtheorem{proposition}{Proposition}[section]

\newcommand{\cfan}[1]{\textnormal{\textbf{\detokenize{#1}}}}

\newcommand{\N}{\numberset{N}}
\newcommand{\R}{\numberset{R}}

\newcommand{\Pk}{\numberset{P}}
\newcommand{\Pkc}{\numberset{P}^{\rm cont}}

\newcommand{\diver}{{\rm div}}
\newcommand{\bdiver}{\boldsymbol{\diver}}
\newcommand{\bnabla}{\boldsymbol{\nabla}}

\newcommand{\bcurl}{\boldsymbol{{\rm curl}}}
\newcommand{\beps}{\boldsymbol{\epsilon}}

\newcommand{\jump}[1]{\lbrack\!\lbrack\,#1\,\rbrack\!\rbrack}
\newcommand{\media}[1]{\left\{\!\left\{\,#1\,\right\}\!\right\}}
\newcommand{\jumpmedia}[1]{\left(\!\left(\,#1\,\right)\!\right)}


\newcommand{\nn}{\boldsymbol{n}}

\newcommand{\Edges}{\Sigma_h}
\newcommand{\EdgesB}{\Sigma_h^{\partial}}
\newcommand{\EdgesI}{\Sigma_h^{\rm {int}}}
\newcommand{\EdgesE}{\Sigma_h^E}

\renewcommand{\epsilon}{\varepsilon}
\renewcommand{\theta}{\vartheta}
\renewcommand{\rho}{\varrho}
\renewcommand{\phi}{\varphi}
\newcommand{\zz}{\boldsymbol{\zeta}}

\newcommand{\BB}{\boldsymbol{B}}
\newcommand{\HH}{\boldsymbol{H}}
\newcommand{\GG}{\boldsymbol{G}}
\newcommand{\TT}{\boldsymbol{\Theta}}

\newcommand{\uu}{\boldsymbol{u}}
\newcommand{\vv}{\boldsymbol{v}}
\newcommand{\ww}{\boldsymbol{w}}
\newcommand{\cc}{\boldsymbol{\chi}}
\newcommand{\ff}{\boldsymbol{f}}

\newcommand{\qq}{\boldsymbol{q}}
\newcommand{\pp}{\boldsymbol{p}}
\newcommand{\xx}{\boldsymbol{x}}


\newcommand{\ns}{\nu_{\rm S}}
\newcommand{\nm}{\nu_{\rm M}}
\newcommand{\bdma}{\mu_a}
\newcommand{\bdmc}{\mu_c}
\newcommand{\bdmJU}{\mu_{J_1}}
\newcommand{\bdmJD}{\mu_{J_2}}
\newcommand{\bdmK}{\mu_{K}}

\newcommand{\bdmY}{\mu_{Y}}

\newcommand{\Huno}{\boldsymbol{H}^1(\Omega)}
\newcommand{\LL}{\boldsymbol{L}}
\newcommand{\WW}{\boldsymbol{W}}
\newcommand{\Ldue}{\boldsymbol{L}^2(\Omega)}
\newcommand{\Hunoenne}{\boldsymbol{H}^1_{\nn}(\Omega)}
\newcommand{\WWc}{\boldsymbol{W}}
\newcommand{\WWs}{\boldsymbol{W_*}}
\newcommand{\WWd}{\boldsymbol{W}^h_k}
\newcommand{\Hunozero}{\boldsymbol{H}^1_0(\Omega)}
\newcommand{\VVc}{\boldsymbol{V}}
\newcommand{\VVs}{\boldsymbol{V_*}}
\newcommand{\VVd}{\boldsymbol{V}^h_k}
\newcommand{\ZZc}{\boldsymbol{Z}}
\newcommand{\ZZs}{\boldsymbol{Z_*}}
\newcommand{\ZZd}{\boldsymbol{Z}^h_k}
\newcommand{\Qc}{Q}
\newcommand{\Qd}{Q^h_{k}}
\newcommand{\Rd}{R^h_{k}}


\newcommand{\am}{a^{\rm M}}
\newcommand{\as}{a^{\rm S}}

\newcommand{\ash}{a^{\rm S}_h}

\newcommand{\astabt}{\mathcal{A}_{3\rm{f}}}
\newcommand{\astabf}{\mathcal{A}_{4\rm{f}}}
\newcommand{\ccoe}{\alpha_*}
\newcommand{\clemma}{c_{\rm L}}

\newcommand{\data}{\gamma_{\rm data}}

\newcommand{\gunoinf}{\Gamma}
\newcommand{\gunohinf}{\Gamma_{h}}
\newcommand{\gzhinf}{\gamma_{h}}
\newcommand{\intcip}{\mathcal{I}_{\mathcal{O}}}
\newcommand{\Pzerok}[1]{\Pi_{#1}}
\newcommand{\PWWd}{\mathcal{I}_{\WWc}}
\newcommand{\PVVd}{\mathcal{I}_{\VVc}}
\newcommand{\Pt}{\mathcal{I}_{\rm L}}
\newcommand{\Pth}{\mathcal{I}_{{\rm L}, h}}
\newcommand{\omegazh}{\Omega_h^{\boldsymbol{\zeta}}}
\newcommand{\omegaEh}{\Omega_h^{M}}
\newcommand{\omegaz}{\omega_{\boldsymbol{\zeta}}}

\newcommand{\normaunoh}[1]{\left\|#1\right\|_{1,h}}
\newcommand{\normamt}[1]{\left\|#1\right\|_{3\rm f}}
\newcommand{\normamf}[1]{\left\|#1\right\|_{4\rm f}}
\newcommand{\normamfc}[1]{\left\|#1\right\|_{4\rm f, \cc}}
\newcommand{\normaupwc}[1]{\left|#1\right|_{\rm upw, \cc}}
\newcommand{\normacipT}[1]{\left|#1\right|_{J_h, \TT}}
\newcommand{\normaupw}[1]{\left|#1\right|_{\rm upw, \uu_h}}
\newcommand{\normacip}[1]{\left|#1\right|_{J_h, \BB_h}}
\newcommand{\normaKc}[1]{\left|#1\right|_{K_h, \cc}}
\newcommand{\normaK}[1]{\left|#1\right|_{K_h, \uu_h}}
\newcommand{\normaY}[1]{\left|#1\right|_{Y_h}}
\newcommand{\normaV}[1]{\left\|#1\right\|_{(\cc, \TT)}}
\newcommand{\normastab}[1]{\left\|#1\right\|_{\rm stab}}

\newcommand{\normai}[1]{{\left\vert\kern-0.25ex\left\vert\kern-0.25ex\left\vert#1\right\vert\kern-0.25ex\right\vert\kern-0.25ex\right\vert}}

\newcommand{\uui}{\PVVd \uu}
\newcommand{\vvi}{\PVVd \vv}
\newcommand{\eei}{\boldsymbol{e}_{\mathcal{I}}}
\newcommand{\eeh}{\boldsymbol{e}_h}
\newcommand{\BBi}{\PWWd \BB}
\newcommand{\HHi}{\PWWd \HH}
\newcommand{\EEi}{\boldsymbol{E}_{\mathcal{I}}}
\newcommand{\EEh}{\boldsymbol{E}_h}


\newcommand{\lmt}{\Lambda_{3{\rm f}}}
\newcommand{\lmf}{\Lambda_{4{\rm f}}}
\newcommand{\ls}{\Lambda_{\rm stab}}

\usepackage{color}


\newcommand{\noStab}{\texttt{noStab}}
\newcommand{\threeF}{\texttt{stab3F}}
\newcommand{\fourF}{\texttt{stab4F}}

\title{Pressure and convection robust Finite Elements for Magnetohydrodynamics}

\author[1,2]{L. Beir\~ao da Veiga \thanks{lourenco.beirao@unimib.it}}

\author[1]{F. Dassi \thanks{franco.dassi@unimib.it}}

\author[3]{G. Vacca \thanks{giuseppe.vacca@uniba.it}}

\affil[1]{Dipartimento di Matematica e Applicazioni,
Universit\`a degli Studi di Milano Bicocca,
Via Roberto Cozzi 55 - 20125 Milano, Italy}

\affil[2]{IMATI-CNR, Via Ferrata 2, 20127 Pavia}

\affil[3]{Dipartimento di Matematica, 
Universit\`a degli Studi di Bari, 
Via Edoardo Orabona 4  - 70125 Bari, Italy}

\begin{document}
\maketitle

\abstract{
We propose and analyze two convection quasi-robust and pressure robust finite element methods for a fully nonlinear time-dependent magnetohydrodynamics problem. 
 Both methods employ  the $H_{\rm div}$ conforming BDM element coupled with an appropriate pressure space guaranteeing the exact diagram for the fluid part, and the $H^1$ conforming Lagrange element for the approximation of the magnetic fluxes, and  make use of suitable DG upwind terms and  CIP stabilizations to handle the fluid and magnetic convective terms.
The main difference between the two approaches here proposed (labeled  as three-field scheme and four field-scheme respectively) lies in the strategy adopted to enforce the divergence-free condition of the magnetic field.
The three-filed scheme implements a grad-div stabilization, whereas the four-field scheme introduces a suitable Lagrange multiplier and additional stabilization terms in the formulation. 
The developed error estimates for the two schemes are uniform in both diffusion parameters and optimal with respect to the diffusive norm. 
Furthermore, in the convection dominated regime, being $k$ the degree of the method and $h$ the mesh size, we are able to prove $O(h^k)$ 
and  $O(h^{k+1/2})$ pre-asymptotic error reduction rate for the three-field scheme and four-filed scheme respectively.
A set of numerical tests support our theoretical findings. 
}

\section{Introduction}\label{sec:intro}

The field of magnetohydrodynamics (MHD) has garnered increased attention in the realm of computational mathematics in recent years. These equations, found in the study of plasmas and liquid metals, have diverse applications in geophysics, astrophysics, and engineering.
The combination of equations from fluid dynamics and electromagnetism results in various models with different formulations and finite element choices. This diversity yields a wide array of methods, each with its own strengths and limitations 
(see for instance \cite{Gerbeau,Gerbeau2,Guermond,Schotzau,Greif,Prohl,Houston,Dong,Hiptmair,Wacker,Badia,Zhang,VEM,hu2021helicity}). 

This article is motivated by the unsteady MHD problem in three space dimensions, here conveniently scaled for ease of exposition. The governing equations involve the velocity field $\uu$, fluid pressure $p$, and magnetic induction $\BB$: 
\begin{equation}
\label{eq:non-linear primale}
\left\{
\begin{aligned}
&\begin{aligned}
\partial_t\uu -  \ns \, \bdiver (\beps (\uu) ) + (\bnabla \uu ) \,\uu    +  \BB \times \bcurl(\BB)  - \nabla p &= \ff \qquad  & &\text{in $\Omega \times I$,} 
\\
\partial_t\BB +  \nm \, \bcurl (\bcurl (\BB) ) - \bcurl(\uu \times \BB)  &= \GG \qquad  & &\text{in $\Omega \times I$,} 
\\
\diver \, \uu = 0 \, , \qquad \diver \, \BB &= 0 \qquad & &\text{in $\Omega \times I$,} 
\end{aligned}
\end{aligned}
\right.
\end{equation}
to be completed with suitable boundary and initial conditions, and where the parameters $\ns$ and $\nm$ represent fluid and magnetic diffusive coefficients, while external forces $\ff$ and $\GG$ account for volumetric effects.

In practical applications, such as aluminum electrolysis or space weather prediction, the scaled diffusion parameters $\ns$ and $\nm$ are often substantially small, see e.g. \cite{Berton, Armero}. It is well known that Finite element schemes in fluid dynamics may encounter instabilities when the convective term dominates the diffusive term, necessitating stabilization techniques. This challenge is addressed in the literature, including references such as \cite{volker:book}, \cite{garcia}, and specific papers like \cite{BH:1982,BFH:2006,OLHL:2009,MT:2015,BDV:NS-TD}. In the context of MHD equations, similar stability issues arise, particularly concerning the electromagnetic aspect which further complicates the problem. Without proper attention, even moderately small diffusion parameters can significantly impact the accuracy of the velocity solution. 

From the theoretical standpoint, a method is said to be quasi-robust if, assuming a sufficiently regular solution, it enjoys error estimates which do not explode for small values of the diffusion parameter, possibly in a norm which includes also some control of the convective term. Such estimates are typically expected to yield convergence rates that are optimal in the diffusive norm. An additional feature which further expresses the robustness of a scheme is the capability of (provably) exhibiting error reduction rates which gain an $O(h^{1/2})$ factor whenever $\nu \ll h$ (the latter representing the mesh size). See, for instance, the classification in \cite{garcia}.
On this authors knowledge, while there are a few articles dealing with the simpler linearized case,
no contribution in the literature develops quasi-robust error estimates (with respect to both diffusion parameters $\ns,\nm$) for the full nonlinear MHD system. 

The present article takes the steps from the results in \cite{MHD-linear}, where a quasi-robust numerical scheme was developed and analysed for the \emph{linearized} stationary version of \eqref{eq:non-linear primale}; we here tackle the more complex nonlinear case.
The method here investigated makes use of $H_{\rm div}$ conforming elements for the fluid part (coupled with a suitable pressure space guaranteeing the exact diagram), combined with an upwind stabilization and DG techniques to preserve the consistency of the discrete formulation. This idea is not novel, see for instance \cite{Hdiv1,Hdiv2,Han}, and has the additional advantage of yielding a pressure-robust method, a property which was recently recognized as critical in incompressible fluid flows, e.g. \cite{linke-merdon:2016,john-linke-merdon-neilan-rebholz:2017}.
We here assume that the domain is convex, which allows to use a globally continuous discrete magnetic field, namely a standard Lagrange finite element space. The proposed approach is combined with a specific stabilization for the magnetic equation, in the spirit of the continuous interior penalty approach (CIP, \cite{CIP,BFH:2006}) and as such taking the form of scaled jumps across element edges. Finally, also a grad-div stabilization is adopted to better take into account the divergence-free condition of the magnetic field, which, differently from the velocities, is not enforced exactly.

Denoting by $k$ the polynomial order of the scheme, we are able to show $O(h^k)$ velocity error estimates for regular solutions in a space-time norm that also includes stabilization terms. Such bounds are uniform with respect to small $\ns$ and $\nm$, thus expressing the quasi-robustness of the scheme, and independent from the pressure solution, thus expressing the pressure robustness.
On the other hand, for this three-field method we are unable to derive the additional $O(h^{1/2})$ factor for the  error reduction rate in convection dominated regimes. We clearly identify the cause of this ``limitation'' in the discrete ${\rm div} {\bf B} = 0$ condition, which is not imposed strongly enough.
We therefore propose also a second (four-field) scheme, in which (1) we impose such solenoidal condition through the introduction of a suitable Lagrange multiplier and (2) add additional stabilization terms to the formulation. For such four-field scheme we are indeed able to prove also the additional $O(h^{1/2})$ factor in the error reduction when $\nu \ll h$.

In the final part of the article we develop some numerical tests in three dimensions in order to evaluate the scheme from the practical perspective and make some comparison among the 3-field method, the 4-field method and a more basic 3-field method without any specific stabilization for the magnetic part of the equations.

The paper is organized as follows. We introduce the continuous problem in Section \ref{sec:cont} and some preliminary results in Section \ref{sec:notations}. Afterwards, the proposed numerical methods are described in Section \ref{sec:stab}. The converge estimates for the velocity and the magnetic field are developed in Sections \ref{sec:theo-3f} and \ref{sec:theo-4f} for the three and four-field schemes, respectively. In Section \ref{sec:pressione} we present briefly some error estimates for the pressure variable.
Finally, numerical results are shown in Section \ref{sec:num}.

%
%
%
%
%
%
%
%
%
%
%
%
%
%
%
%
%
%

\section{Continuous problem}\label{sec:cont}

We start this section with some standard notations. 
Let the computational domain $\Omega \subset \R^3$ be a convex polyhedron with regular boundary $\partial \Omega$ having outward pointing unit normal $\nn$.
%
The symbol $\nabla$    
denotes the gradient for scalar functions while  
$\bnabla$, $\beps$,  $\bcurl$ and $\diver$ 
denote   
the gradient, the symmetric gradient operator, the curl operator,
and  the divergence operator for vector valued functions  respectively. Finally, 
$\bdiver$ denotes the vector valued divergence operator for tensor fields. 

Throughout the paper, we will follow the usual notation for Sobolev spaces
and norms \cite{Adams:1975}.
Hence, for an open bounded domain $\omega$,
the norms in the spaces $W^r_p(\omega)$ and $L^p(\omega)$ are denoted by
$\|{\cdot}\|_{W^r_p(\omega)}$ and $\|{\cdot}\|_{L^p(\omega)}$, respectively.
Norm and seminorm in $H^{r}(\omega)$ are denoted respectively by
$\|{\cdot}\|_{r,\omega}$ and $|{\cdot}|_{r,\omega}$,
while $(\cdot,\cdot)_{\omega}$ and $\|\cdot\|_{\omega}$ denote the $L^2$-inner product and the $L^2$-norm (the subscript $\omega$ may be omitted when $\omega$ is the whole computational
domain $\Omega$).
For the functional spaces introduced above we use the bold symbols to denote the corresponding sets of vector valued functions.
We further introduce the following spaces
\[
\begin{aligned}
\Hunoenne &:= \{\vv \in \Huno \,\,\, \text{s.t.} \,\,\, \vv \cdot \nn = 0 \,\,\, \text{on $\partial \Omega$} \} \,, 
\\
\HH_0(\diver, \Omega) &:= \{\vv \in \boldsymbol{L}^2(\Omega) \,\,\,\text{s.t.} \,\,\, \diver \,\vv \in L^2(\Omega) \,\,\, \text{and} \,\,\, \vv \cdot \nn = 0 \,\,\, \text{on $\partial \Omega$} \} \,,
\\
\HH_0(\diver^0, \Omega) &:= \{\vv \in \HH_0(\diver, \Omega) \,\,\,\text{s.t.} \,\,\, \diver \,\vv = 0 \} 
\,.
\end{aligned}
\]
For a Banach space $V$ we denote with $V'$ the dual space of $V$.
Let $(T_0, T_F) \subset \R$ denote the time interval of interest. For a space-time function $v(\xx, t)$ defined on $\omega \times (T_0, T_F)$, we denote with $\partial_t v$ the derivative with respect to the time variable. Furthermore, using standard notations \cite{quarteroni-valli:book}, for a Banach space $V$ with norm $\|\cdot\|_V$, we introduce the Bochner spaces $W^s_q(T_0, T_F; V)$ and $H^s(T_0, T_F; V)$  endowed with norms $\|{\cdot}\|_{W^s_q(T_0, T_F; V)}$ and $\|{\cdot}\|_{H^s(T_0, T_F; V)}$ respectively.

\smallskip
Let now $\Omega \subseteq \R^3$ be the polyhedral convex domain, let $T > 0$ be the final time and set $I:= (0, T)$. 
We consider the unsteady MagnetoHydroDynamic (MHD) equation 
(see for instance \cite[Chapter 2]{Gerbeau}):
\begin{equation}
\label{eq:primale}
\left\{
\begin{aligned}
&\begin{aligned}
\partial_t \uu -  \ns \, \bdiver (\beps (\uu) ) + (\bnabla \uu ) \,\uu    +  \BB \times \bcurl(\BB)  - \nabla p &= \ff \qquad  & &\text{in $\Omega \times I$,} 
\\
\diver \, \uu &= 0 \qquad & &\text{in $\Omega \times I$,} 
\\
\partial_t \BB +  \nm \, \bcurl (\bcurl (\BB) ) - \bcurl(\uu \times \BB)  &= \GG \qquad  & &\text{in $\Omega \times I$,} 
\\
\diver \, \BB &= 0 \qquad & &\text{in $\Omega \times I$,} 
\end{aligned}
\end{aligned}
\right.
\end{equation}
coupled with the homogeneous boundary conditions
\begin{equation}
\label{eq:bc cond}
\uu = 0 \,, \quad 
\BB \cdot \nn = 0 \,,  \quad 
\bcurl(\BB) \times \nn = 0  \quad \text{on $\partial \Omega$,}
\end{equation}
and the initial conditions
\begin{equation}
\label{eq:ini cond}
\uu(\cdot, 0) = \uu_0 \,, \quad 
\BB(\cdot, 0) = \BB_0  \quad \text{in $\Omega$.}
\end{equation}
We assume that the external loads $\ff \in L^2(0, T; {\LL^2(\Omega)})$ and 
$\GG \in L^2(0, T; \boldsymbol{L}^2(\Omega))$, and initial data
$\uu_0$, $\BB_0 \in \HH_0(\diver^0, \Omega)$.
The parameters $\ns$, $\nm \in \R^+$ in \eqref{eq:primale} represent the viscosity of the fluid and the inverse of the magnetic permeability of the medium, respectively.

Notice that the third and fourth equations in \eqref{eq:primale}, 
the boundary conditions \eqref{eq:bc cond} and the initial condition \eqref{eq:ini cond},  yield the compatibility condition  
$(\GG(\cdot, t) \,, \nabla \phi)  = 0$  for all $\phi \in H^1(\Omega)$, a.e. in $I$.

We now derive the variational formulation for Problem \eqref{eq:primale}.
Consider the following spaces
\begin{equation}
\label{eq:spazi_c}
\VVc := \Hunozero \,, 
\quad
\WWc := \Hunoenne \,,
\quad
\Qc  := L^2_0(\Omega) = \left\{ q \in L^2(\Omega) \quad \text{s.t.} \quad (q, \,1) = 0 \right\}  \,,
\end{equation}
representing the velocity field space, the magnetic induction space and the pressure space, respectively, endowed with the standard norms, and the forms
\begin{equation}
\label{eq:forme_c1}
\as(\uu,  \vv) :=  (\beps(\uu), \, \beps (\vv) ) \,,
\qquad
c(\cc;  \uu, \vv) :=  \left( ( \bnabla \uu ) \, \cc ,\, \vv  \right) \,,
\end{equation}
and 
\begin{equation}
\label{eq:forme_c2}
\begin{aligned}
\am(\BB,  \HH) &:=   \left( \bcurl(\BB) ,\, \bcurl(\HH)\right) +
\left(\diver(\BB) \,, \diver(\HH) \right),
\\
b(\vv, q) &:=  (\diver \vv, \, q) \,,
\\
d(\TT; \HH, \vv) &:=  ( \bcurl (\HH ) \times \TT ,\, \vv) \,.
\end{aligned}
\end{equation}
%
Let us introduce the kernel of the  bilinear form $b(\cdot,\cdot)$
that corresponds to the functions in $\VVc$ with vanishing divergence
\begin{equation}
\label{eq:Z}
\ZZc := \{ \vv \in \VVc \quad \text{s.t.} \quad \diver \, \vv = 0  \}\,.
\end{equation}
We consider the following variational problem  \cite[Definition 2.18]{Gerbeau}: find
\begin{itemize}
\item $\uu \in L^\infty(0, T; \HH_0(\diver, \Omega)) \cap L^2(0, T; \VVc)$,
\item $p \in L^2(0, T; \Qc)$,
\item $\BB \in L^\infty(0, T; \HH_0(\diver, \Omega)) \cap L^2(0, T; \WWc)$,
\end{itemize}
such that for a.e. $t \in I$
\begin{equation}
\label{eq:variazionale}
\left\{
\begin{aligned}
\left(\partial_t \uu, \vv \right) + \ns \as(\uu, \vv) + c(\uu; \uu, \vv)  
-d(\BB; \BB, \vv)  + b(\vv, p) 
&= 
(\ff, \vv)  
&\,\, & \text{$\forall \vv \in \VVc$,} 
\\
b(\uu, q) &= 0 
&\,\, & \text{$\forall q \in \Qc$,}
\\
\left(\partial_t \BB, \HH \right) + \nm \am(\BB, \HH) + d(\BB; \HH, \uu) 
&= 
(\GG, \HH)  
&\,\, & \text{$\forall \HH \in \WWc$,}
\end{aligned}
\right.
\end{equation}
coupled with initial conditions \eqref{eq:ini cond}.
Note that the condition $\diver\BB=0$ is implied by the last equation, see for instance the analogous proof for the linear case in \cite{MHD-linear}.

\begin{proposition}
Assume that the domain $\Omega$ is a convex polyhedron. Then
Problem \eqref{eq:variazionale} admits solution. 
Additionally,  Problem \eqref{eq:variazionale} is a variational formulation of Problem \eqref{eq:primale}.
Moreover a solution $(\uu, p, \BB)$ satisfies the stability bound
\begin{equation}
\label{eq:cont.stab}
\begin{aligned}
\Vert \uu(\cdot, T)\Vert^2+ 
\Vert \BB(\cdot, T)\Vert^2 +
\ns \Vert \uu \Vert_{L^2(0,T; \VVc)}^2 +
\nm \Vert \BB \Vert_{L^2(0,T; \WWc)}^2 +
\Vert p \Vert_{L^2(0,T; \Qc)}^2 \lesssim
\data^2 \,,
\end{aligned}
\end{equation}
where
\begin{equation}
\label{eq:data}
\data^2 :=
\Vert \ff \Vert_{L^1(0,T; \Ldue)}^2 +
\Vert \GG \Vert_{L^1(0,T; \Ldue)}^2
+
\Vert \uu_0\Vert_{\Ldue}^2 + 
\Vert \BB_0\Vert_{\Ldue}^2  \,.
\end{equation}
\end{proposition}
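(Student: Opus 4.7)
I would split the statement into three parts: existence, equivalence with \eqref{eq:primale}, and the stability bound \eqref{eq:cont.stab}.

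\emph{Existence and equivalence.} For existence I would invoke the classical Faedo--Galerkin construction for 3D MHD as in \cite[Chapter 2]{Gerbeau}: choose divergence-free bases of $\ZZc$ (eigenfunctions of Stokes) and of $\WWc$ (eigenfunctions of the $\bcurl\bcurl$ operator with the natural boundary conditions), solve the resulting finite-dimensional ODE system, derive a priori energy estimates identical to those below, and pass to the limit using Aubin--Lions compactness; the pressure is then recovered a posteriori via the inf-sup condition of $b(\cdot,\cdot)$. For the equivalence with the strong form I would just integrate by parts in \eqref{eq:primale} against $\vv\in\VVc$, $q\in\Qc$, $\HH\in\WWc$; the boundary terms vanish thanks to \eqref{eq:bc cond}, in particular $(\bdiver\beps(\uu),\vv)=-\as(\uu,\vv)$ on $\Hunozero$ and $(\bcurl\bcurl\BB,\HH)=(\bcurl\BB,\bcurl\HH)$ on $\Hunoenne$ because $\bcurl\BB\times\nn=0$ on $\partial\Omega$. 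Finally $\diver\BB=0$ follows from testing the induction equation with $\HH=\nabla\phi$ for $\phi\in H^1(\Omega)$ and using the compatibility of $\GG$.

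\emph{Energy estimate for $(\uu,\BB)$.} Take $\vv=\uu(t)$, $q=p(t)$, $\HH=\BB(t)$ and add the three equations in \eqref{eq:variazionale}. Three crucial cancellations occur: $b(\uu,p)=0$ since $\diver\uu=0$; the trilinear $c(\uu;\uu,\uu)=0$ by the standard integration-by-parts identity valid on $\Hunozero\cap\ZZc$; and the Lorentz-coupling terms $-d(\BB;\BB,\uu)+d(\BB;\BB,\uu)$ annihilate by construction. What remains is the pointwise-in-time identity
\begin{equation*}
\tfrac12\tfrac{d}{dt}\bigl(\|\uu\|^2+\|\BB\|^2\bigr)+\ns\|\beps(\uu)\|^2+\nm\bigl(\|\bcurl\BB\|^2+\|\diver\BB\|^2\bigr)=(\ff,\uu)+(\GG,\BB).
\end{equation*}
I would integrate on $(0,\tau)$, take the supremum over $\tau\in[0,T]$, and bound the right-hand side by
\begin{equation*}
\sup_{[0,\tau]}\|\uu\|\cdot\|\ff\|_{L^1(0,T;\Ldue)}+\sup_{[0,\tau]}\|\BB\|\cdot\|\GG\|_{L^1(0,T;\Ldue)}.
\end{equation*}
Setting $X:=\sup_{[0,T]}(\|\uu\|^2+\|\BB\|^2)^{1/2}$, this yields $X^2\le \data^2 + 2\data\, X$, whence $X\lesssim\data$. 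Finally Korn's inequality controls $\ns\|\uu\|^2_{L^2(0,T;\VVc)}$, while on a convex polyhedron the norm equivalence $\|\BB\|_{\Hunoenne}\sim\|\bcurl\BB\|+\|\diver\BB\|$ (valid for $\BB\cdot\nn=0$) controls $\nm\|\BB\|^2_{L^2(0,T;\WWc)}$.

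\emph{Pressure bound.} For the pressure I would rely on the inf-sup condition of $b$ on $\VVc\times\Qc$: from the momentum equation, for a.e. $t$,
\begin{equation*}
\|p(t)\|_{\Qc}\lesssim\sup_{\vv\in\VVc}\frac{\langle\partial_t\uu,\vv\rangle+\ns\as(\uu,\vv)+c(\uu;\uu,\vv)-d(\BB;\BB,\vv)-(\ff,\vv)}{\|\vv\|_{\VVc}}.
\end{equation*}
The linear terms are handled by duality with bounds already obtained. The nonlinear terms are estimated by 3D Sobolev embeddings as $|c(\uu;\uu,\vv)|\lesssim\|\uu\|^{1/2}\|\uu\|_{\VVc}^{3/2}\|\vv\|_{\VVc}$ and $|d(\BB;\BB,\vv)|\lesssim\|\BB\|^{1/2}\|\BB\|_{\WWc}^{3/2}\|\vv\|_{\VVc}$, whose squares, after time integration, are controlled by $\sup_{[0,T]}\|\uu\|\cdot\|\uu\|^2_{L^2(0,T;\VVc)}$ and analogously for $\BB$, hence by $\data^2$ (absorbing $\ns,\nm$ into the hidden constant, as allowed by the symbol $\lesssim$). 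The term $\langle\partial_t\uu,\vv\rangle$ is the delicate one: I would re-read it from the equation itself as a linear functional on $\VVc$ whose $\VVc'$-norm is bounded by the sum of all the others; this gives the required $L^2(0,T;\Qc)$ estimate for $p$.

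\emph{Main obstacle.} The routine steps are the energy identity and equivalence; the only place where care is needed is the pressure estimate in the three-dimensional nonlinear regime, where one has to match the dual norm of $\partial_t\uu$ against the nonlinear fluxes using the correct interpolation/Sobolev inequalities so that everything is controlled by the energy norms already bounded by $\data$.
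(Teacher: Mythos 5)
Your existence, equivalence and energy-estimate steps follow essentially the same route as the paper, whose proof is a pointer to Proposition 2.19, Remark 2.2.1, Proposition 3.18 and Lemma 3.19 of \cite{Gerbeau} together with the convex-domain embedding \eqref{eq:well3}; your Galerkin construction, the three cancellations, Korn, and the equivalence $\Vert\bcurl\BB\Vert+\Vert\diver\BB\Vert\gtrsim\Vert\BB\Vert_{\WWc}$ are exactly those ingredients (with the usual caveat, which you implicitly handle, that the energy identity is performed on the Galerkin approximations and passed to the limit).

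The genuine gap is in your pressure estimate. Squaring $|c(\uu;\uu,\vv)|\lesssim\Vert\uu\Vert^{1/2}\Vert\uu\Vert_{\VVc}^{3/2}\Vert\vv\Vert_{\VVc}$ gives $\Vert\uu\Vert\,\Vert\uu\Vert_{\VVc}^{3}$, and $\int_0^T\Vert\uu\Vert\,\Vert\uu\Vert_{\VVc}^{3}\,{\rm d}t$ is \emph{not} controlled by $\sup_{[0,T]}\Vert\uu\Vert\cdot\Vert\uu\Vert^2_{L^2(0,T;\VVc)}$: that would require a pointwise-in-time bound on $\Vert\uu\Vert_{\VVc}$, which the energy estimate does not supply. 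With only $\uu\in L^\infty(0,T;\Ldue)\cap L^2(0,T;\VVc)$ (and the analogous regularity for $\BB$), the convective and Lorentz terms, and hence $\partial_t\uu$, are bounded in $L^{4/3}(0,T;\VVc')$ but not in $L^2(0,T;\VVc')$, so the inf-sup/duality argument as you set it up delivers at best $p\in L^{4/3}(0,T;\Qc)$, not the claimed $L^2(0,T;\Qc)$ bound by $\data^2$. In addition, "absorbing $\ns,\nm$ into the hidden constant" actually means absorbing their inverses (since $\Vert\uu\Vert^2_{L^2(0,T;\VVc)}\lesssim\data^2/\ns$), which destroys the uniformity in the diffusion parameters that the bound \eqref{eq:cont.stab} is meant to express. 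The paper avoids this by invoking the pressure estimate already established for the MHD system in \cite{Gerbeau} (Proposition 3.18 and Lemma 3.19) rather than the direct Sobolev computation you propose; as written, your argument for the pressure term does not close.
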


\begin{proof}
The proof follows combining the arguments in Proposition 2.19, Remark 2.2.1, Proposition 3.18 and Lemma 3.19 in \cite{Gerbeau}. A key ingredient of the proof is the following embedding valid on the convex polyhedron $\Omega$  (see \cite[Theorem 3.9]{Girault-book} and \cite{Amrouche}): 
there exists a positive constant $c_\Omega$ depending only on the domain $\Omega$ s.t.
\begin{equation}
\label{eq:well3}
\Vert \bcurl(\HH)\Vert^2 +  \Vert \diver \HH\Vert^2 
\geq c_\Omega \Vert \HH \Vert_{\WWc}^2 \qquad \text{for all $\HH \in \WWc$.}
\end{equation}
\end{proof}

\section{Notations and preliminary theoretical results}
\label{sec:notations}

In this section we fix some notations and we introduce some preliminary 
theoretical results that will be instrumental in the forthcoming sections.


Let $\{\Omega_h\}_h$ be a family of  conforming decompositions of $\Omega$ into tetrahedral elements $E$ of diameter $h_E$. We denote by 
$h := \sup_{E \in \Omega_h} h_{E}$ the mesh size associated with $\Omega_h$.
Let $\mathcal{N}_h$ be the set of internal vertices of the mesh $\Omega_h$, and for any $\zz \in \mathcal{N}_h$ we set
\[
\omegazh:= \{E \in \Omega_h \, \, \, \text{s.t.} \,\, \, \zz \in E\}\,,
\quad
\omegaz := \cup_{E \in \omegazh} E \,,
\quad
h_{\zz} := \text{diameter of $\omegaz$}
 \,.
\]

\noindent
For any $E \in \Omega_h$ let $\mathcal{N}_h^E$ be the set of the vertices of $E$. 
We denote by $\Edges$ the set of faces of $\Omega_h$ divided into 
internal $\EdgesI$ and external $\EdgesB$ faces; 
for any $E \in \Omega_h$ we denote by $\EdgesE$ the set of the faces of $E$. Furthermore for any $f \in \Edges$ we denote with $h_f$ the diameter of $f$ and
\begin{equation}
\label{eq:omegaf}
\Omega_h^f:= \{E \in \Omega_h \, \, \, \text{s.t.} \,\, \, f \subset \partial E\}\,,
\quad
\omega_f := \cup_{E \in \Omega_h^f} E \,,
\quad
h_{\omega_f} := \text{diameter of $\omega_f$} \,.
\end{equation}

We make the following mesh assumptions. Note that the second condition \textbf{(MA2)}
is required \emph{only} for the analysis of the lowest order case (that is order $1$).

\smallskip
\noindent
\textbf{(MA1) Shape regularity assumption:}

\noindent
The mesh family $\left\{ \Omega_h \right\}_h$ is shape regular: it exists a  positive  constant $c_{\rm{M}}$ such that each element $E \in \{ \Omega_h \}_h$ is star shaped with respect to a ball of radius $\rho_E$ with  $h_E \leq c_{\rm{M}} \rho_E$. 

\smallskip
\noindent
\textbf{(MA2) Mesh agglomeration with stars macroelements:}

\noindent 
There exists a family of conforming meshes $\{ \widetilde{\Omega}_h \}_h$ of $\Omega$ with the following properties:
(i) it exists a positive constant $\widetilde{c}_{\rm{M}}$ such that each element $M \in \widetilde{\Omega}_h$ is a finite  (connected) agglomeration of elements in $\Omega_h$, i.e., it exists $\omegaEh \subset \Omega_h$ with ${\rm{card}}(\omegaEh) \leq \widetilde{c}_{\rm{M}}$ and $M = \cup_{E \in \omegaEh} E$;
(ii) for any $M \in \widetilde{\Omega}_h$ it exists $\zz \in \mathcal{N}_h$ such that $\omegaz \subseteq M$.

\begin{remark}
\label{rm:mesh}
 Assumption \cfan{(MA1)} is classical in FEM. Assumption \cfan{(MA2)} is needed only for $k=1$ and has a purely theoretical purpose (see Lemma \ref{lm:cip} and Lemma \ref{lm:int-i}).  However, it is easy to see that \cfan{(MA2)} is not restrictive, see Remark 3.1 in \cite{MHD-linear}.
\end{remark}

The mesh assumption \textbf{(MA1)} easily implies  the following property.

\noindent
\textbf{(MP1) local quasi-uniformity:}

\noindent
It exists a positive constant $c_{\rm{P}}$ depending on $c_{\rm{M}}$ such that for any $E \in \Omega_h$, $f \in \Edges$  and $\zz \in \mathcal{N}_h$
\[
\max_{E \in \Omega_h} \max_{f \in \EdgesE} \frac{h_E}{h_f} \leq c_{\rm{P}} 
\,,
\quad
\max_{E' \in \Omega_h^f} \frac{h_{\omega_f}}{h_{E'}} \leq c_{\rm{P}}
\,,
\quad
\max_{E', E'' \in \Omega_h^{\zz}} \frac{h_E'}{h_{E''}} \leq c_{\rm{P}} \,,
\quad
\max_{E' \in \Omega_h^{\zz}} \frac{h_{\zz}}{h_{E'}} \leq c_{\rm{P}} 
\,.
\]

\noindent
For $m \in \N$ and for $S \subseteq \Omega_h$, we introduce the polynomial spaces 
\begin{itemize}
\item $\Pk_m(\omega)$ is the set of polynomials on $\omega$ of degree $\leq m$, with $\omega$ a generic set;
\item $\Pk_m(S) := \{q \in L^2\bigl(\cup_{E \in S}E \bigr) \quad \text{s.t.} \quad q|_{E} \in  \Pk_m(E) \quad \text{for all $E \in S$}\}$;
\item $\Pkc_m(S) := \Pk_m(S) \cap C^0\bigl(\cup_{E \in S}E \bigr)$.
\end{itemize}
For $s \in \R^+$ and  $p \in [1,+\infty]$ let us define the broken Sobolev spaces:
\begin{itemize}
\item $W^s_p(S) := \{\phi \in L^2(\Omega) \quad \text{s.t.} \quad \phi|_{E} \in  W^s_p(E) \quad \text{for all $E \in S$}\}$,
\end{itemize}
equipped with  the standard broken norm 
$\Vert \cdot \Vert_{W^s_p(S)}$ 
and seminorm 
$\vert \cdot \vert_{W^s_p(S)}$.

\noindent
For any $E \in \Omega_h$, $\nn_E$  denotes the outward normal vector to $\partial E$.
For any mesh face $f$ let $\nn_f$ be a fixed unit normal vector to the face $f$.
Notice that for any $E \in \Omega_h$ and $f \in \EdgesE$ it holds $\nn_f = \pm \nn_E$.
We assume that for any boundary face $f \in \EdgesB \cap \EdgesE$ it holds
$\nn_f = \nn_E = \nn$, i.e. $\nn_f$ is the outward to $\partial \Omega$.

\noindent
The jump and the average operators on $f \in \EdgesI$ are defined for every piecewise continuous function w.r.t. $\Omega_h$ respectively by
\[
\begin{aligned}
\jump{\phi}_f(\xx) &:=
\lim_{s \to 0^+} \left( \phi(\xx - s \nn_f) - \phi(\xx + s \nn_f) \right)
\\
\media{\phi}_f(\xx) &:= 
\frac{1}{2}\lim_{s \to 0^+} \left( \phi(\xx - s \nn_f) + \phi(\xx + s \nn_f) \right)
\end{aligned}
\]
and $\jump{\phi}_f(\xx) = \media{\phi}_f(\xx) = \phi(\xx)$ on $f \in \EdgesB$.

\noindent
Let $\mathcal{D}$ denote one of the differential operators $\bnabla$, $\beps$, $\bcurl$.
Then, $\mathcal{D}_h$ represents the broken operator defined for all $\boldsymbol{\phi} \in \HH^1(\Omega_h)$ as
$\mathcal{D}_h (\boldsymbol{\phi} )|_E := \mathcal{D} (\boldsymbol{\phi} |_E)$ for all $E \in \Omega_h$.

\noindent
Finally, given $m \in \N$, we denote with
$\Pzerok{m} \colon L^2(\Omega) \to \Pk_m(\Omega_h)$ the  $L^2$-projection operator onto the space of polynomial functions.
%
The above definitions extend to vector valued and tensor valued functions.

%
%

%

In the following $C$ will denote a generic positive constant, independent of the mesh size $h$, of the diffusive coefficients $\ns$ and $\nm$, of the loadings  $\ff$ and $\GG$, of the problem solution $(\uu, p, \BB)$,  but which may depend on $\Omega$, on the order of the method $k$ (introduced in Section \ref{sec:stab}), on the final time $T$ and on the mesh regularity constants $c_{\rm{M}}$ and  $\widetilde{c}_{\rm{M}}$ in Assumptions \textbf{(MA1)} and \textbf{(MA2)}.
The shorthand symbol $\lesssim$ will denote a bound up to $C$.

We mention a list of classical results (see for instance \cite{brenner-scott:book}) that will be useful in the sequel.

\begin{lemma}[Trace inequality]
\label{lm:trace}
Under the mesh assumption \cfan{(MA1)}, for any $E \in \Omega_h$ and for any  function $v \in H^1(E)$ it holds 
\[
\sum_{f \in \EdgesE}\|v\|^2_{f} \lesssim h_E^{-1}\|v\|^2_{E} + h_E\|\nabla v\|^2_{E} \,.
\]
\end{lemma}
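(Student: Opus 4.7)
The plan is to prove this by the standard reference-element plus scaling argument. First, I would fix a reference tetrahedron $\hat{E}$ and invoke the continuity of the trace operator from $H^1(\hat{E})$ into $L^2(\partial \hat{E})$ to obtain a scale-invariant inequality of the form
\[
\|\hat{v}\|_{L^2(\hat{f})}^2 \lesssim \|\hat{v}\|_{L^2(\hat{E})}^2 + \|\hat{\nabla}\hat{v}\|_{L^2(\hat{E})}^2
\qquad \forall \hat{v}\in H^1(\hat{E}),
\]
valid for each face $\hat{f}$ of $\hat{E}$, with an absolute constant. This is a purely continuous statement and may be established, for instance, by extending $\hat{v}$ and applying the divergence theorem to a smooth vector field $\boldsymbol{\psi}$ on $\hat{E}$ with $\boldsymbol{\psi}\cdot\nn=1$ on $\hat{f}$.

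Second, for a generic element $E\in\Omega_h$ I would transport the inequality via the affine map $F_E\colon \hat{E}\to E$, $F_E(\hat{\xx})=B_E\hat{\xx}+\boldsymbol{b}_E$. Shape regularity (MA1) yields the bounds $\|B_E\|\lesssim h_E$, $\|B_E^{-1}\|\lesssim h_E^{-1}$, and $|\det B_E|\simeq h_E^3$, with implicit constants depending only on $c_{\rm M}$. Setting $\hat{v}:=v\circ F_E$ and applying a change of variables produces the scalings $\|v\|_f^2 \simeq h_E^{2}\,\|\hat{v}\|_{\hat{f}}^2$, $\|v\|_E^2 \simeq h_E^{3}\,\|\hat{v}\|_{\hat{E}}^2$, and $\|\nabla v\|_E^2 \simeq h_E\,\|\hat{\nabla}\hat{v}\|_{\hat{E}}^2$. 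Substituting into the reference inequality, summing over the (uniformly boundedly many, by MA1) faces $f\in\EdgesE$, and dividing by the appropriate power of $h_E$ gives the claimed bound.

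Since the result is classical, I do not expect any real obstacle; the only care required is a careful bookkeeping of the Jacobian factors so that the constants depend exclusively on $c_{\rm M}$ and not on $h_E$ or the individual geometry of $E$. The bound on the cardinality of $\EdgesE$ (which is $4$ for tetrahedra) ensures the summation step is harmless.
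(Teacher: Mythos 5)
Your argument is correct: the reference-element trace inequality combined with the affine scaling bounds $\|B_E\|\lesssim h_E$, $\|B_E^{-1}\|\lesssim h_E^{-1}$, $|\det B_E|\simeq h_E^3$ (all uniform under \textbf{(MA1)}) reproduces exactly the stated powers of $h_E$, and the sum over the four faces is harmless. The paper does not prove this lemma but cites it as a classical result (e.g.\ Brenner--Scott), and your scaling argument is precisely the standard proof behind that citation, so there is nothing further to add.
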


\begin{lemma}[Bramble-Hilbert]
\label{lm:bramble}
Under the mesh assumption \cfan{(MA1)}, let $m \in {\mathbb N}$. For any $E \in \Omega_h$ and for any  smooth enough function $\phi$ defined on $\Omega$, it holds 
\[
\Vert\phi - \Pzerok{m} \phi \Vert_{W^r_p(E)} \lesssim h_E^{s-r} \vert \phi \vert_{W^s_p(E)} 
\qquad  \text{$s,r \in \N$, $r \leq s \leq m+1$, $p \in [1, \infty]$.}
\]
\end{lemma}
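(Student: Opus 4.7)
The plan is to follow the classical Deny--Lions strategy combined with a scaling argument. Since $\Pzerok{m}$ acts elementwise (the $L^2$-projection onto $\Pk_m(\Omega_h)$ decouples on each $E$), I fix an element $E \in \Omega_h$ and work locally. I introduce the affine map $F_E \colon \hat{E} \to E$ from a reference tetrahedron $\hat{E}$ onto $E$; by the shape regularity \textbf{(MA1)}, the singular values of the Jacobian of $F_E$ are comparable to $h_E$, so that for any $\phi$ with $\hat{\phi} := \phi \circ F_E$ and any integer $\sigma \geq 0$ one has the scaling
\[
|\phi|_{W^\sigma_p(E)} \simeq h_E^{\sigma - 3/p} \, |\hat{\phi}|_{W^\sigma_p(\hat{E})},
\]
with constants depending only on $c_{\rm M}$. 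Since $\Pk_m$ is preserved under affine pullback, the projection commutes with $F_E$, so it suffices to prove the estimate on $\hat{E}$ and then scale.

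On the reference element, I would invoke the Deny--Lions lemma: because $s \leq m+1$, the seminorm $|\cdot|_{W^s_p(\hat{E})}$ vanishes on a subspace of $\Pk_m(\hat{E})$, and a standard quotient-space argument (in a finite-dimensional factor space) produces $\hat{q} \in \Pk_m(\hat{E})$ with
\[
\|\hat{\phi} - \hat{q}\|_{W^s_p(\hat{E})} \lesssim |\hat{\phi}|_{W^s_p(\hat{E})}.
\]
Using $\Pzerok{m} \hat{q} = \hat{q}$ and the triangle inequality, I split
\[
\|\hat{\phi} - \Pzerok{m}\hat{\phi}\|_{W^r_p(\hat{E})} \leq \|\hat{\phi} - \hat{q}\|_{W^r_p(\hat{E})} + \|\Pzerok{m}(\hat{\phi} - \hat{q})\|_{W^r_p(\hat{E})}.
\]
The first term is bounded by $\|\hat{\phi} - \hat{q}\|_{W^s_p(\hat{E})}$ because $r \leq s$. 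For the second, since $\Pzerok{m}(\hat{\phi} - \hat{q})$ lies in the finite-dimensional space $\Pk_m(\hat{E})$, all norms on it are equivalent (with a constant depending only on $\hat{E}$ and $m$), so
\[
\|\Pzerok{m}(\hat{\phi} - \hat{q})\|_{W^r_p(\hat{E})} \lesssim \|\Pzerok{m}(\hat{\phi} - \hat{q})\|_{L^p(\hat{E})} \lesssim \|\hat{\phi} - \hat{q}\|_{L^p(\hat{E})},
\]
where the last inequality is the $L^p$-stability of the $L^2$-projection on a fixed element (reducing, via Hölder, to the equivalence of the $L^2$ and $L^p$ norms on $\Pk_m(\hat{E})$). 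Combining the two contributions yields $\|\hat{\phi} - \Pzerok{m}\hat{\phi}\|_{W^r_p(\hat{E})} \lesssim |\hat{\phi}|_{W^s_p(\hat{E})}$.

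The final step is to push forward the reference estimate to $E$ by the scaling relations: the left-hand side picks up a factor $h_E^{-r + 3/p}$ and the right-hand side a factor $h_E^{s - 3/p}$, so the $3/p$ contributions cancel and one is left with the claimed $h_E^{s-r}$. The only subtlety, and the main bookkeeping obstacle, is tracking dependence of the hidden constants solely on the shape regularity constant $c_{\rm M}$ (and on $m$, $p$) for all admissible pairs $(s,r)$ and $p \in [1,\infty]$; the $p = \infty$ case is handled identically with the $3/p$ factors replaced by $0$. This is routine given \textbf{(MA1)}, confirming the stated estimate.
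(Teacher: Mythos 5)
The paper does not prove this lemma; it is stated as a classical result with a pointer to \cite{brenner-scott:book}, and your Deny--Lions-plus-affine-scaling argument (projection commuting with the affine pullback, quotient argument on the reference tetrahedron, $L^p$-stability of the $L^2$-projection there, then scaling back) is precisely the standard proof from that reference, so your proposal is correct and consistent with the paper's treatment. Only note the harmless slip in your displayed scaling relation: with $\hat{\phi}=\phi\circ F_E$ it should read $|\phi|_{W^\sigma_p(E)}\simeq h_E^{3/p-\sigma}\,|\hat{\phi}|_{W^\sigma_p(\hat{E})}$, and for $p<2$ the $L^p$-stability of $\Pzerok{m}$ on $\hat{E}$ is cleanest via an $L^2$-orthonormal polynomial basis and H\"older on the coefficients rather than the chain through the $L^2$ norm of $\hat{\phi}-\hat{q}$; in any case you apply the correct exponents in the final step, so nothing breaks.
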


\begin{lemma}[Inverse estimate]
\label{lm:inverse}
Under the mesh assumption \cfan{(MA1)}, let $m \in {\mathbb N}$. 
Then for any $E \in \Omega_h$, for any $1\leq p,q \leq \infty$, and for any $p_m \in \Pk_m(E)$ it holds 
\[
\|p_m\|_{W^s_p(E)} \lesssim h_E^{3/p - 3/q -s} \|p_m\|_{L^q(E)} 
\]
where the involved constant only depends on $m$, $s$, $p$, $q$ and $c_{\rm{M}}$.
\end{lemma}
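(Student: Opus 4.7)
The plan is to execute a standard scaling-to-a-reference-element argument combined with the equivalence of norms on a finite-dimensional vector space. Thanks to the shape regularity \cfan{(MA1)}, each tetrahedron $E \in \Omega_h$ can be written as the image under an affine bijection $F_E(\hat{\xx}) = B_E \hat{\xx} + b_E$ of a fixed reference tetrahedron $\hat{E}$, where the Jacobian satisfies $\|B_E\| \simeq h_E$, $\|B_E^{-1}\| \simeq h_E^{-1}$ and $|\det B_E| \simeq h_E^3$, with hidden constants depending only on $c_{\rm M}$.

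The next step is to push the problem to $\hat{E}$ via $\hat{p}_m := p_m \circ F_E \in \Pk_m(\hat{E})$. A direct change of variables together with the chain rule yields the scaling relations
\[
\|\hat{p}_m\|_{L^q(\hat{E})} \simeq h_E^{-3/q} \|p_m\|_{L^q(E)},
\qquad
|p_m|_{W^r_p(E)} \simeq h_E^{-r + 3/p} |\hat{p}_m|_{W^r_p(\hat{E})},
\]
for all $0 \leq r \leq s$, with constants depending only on $c_{\rm M}$, $r$, $p$, $q$; the endpoint cases $p,q = \infty$ are handled via the usual convention $1/\infty = 0$.

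Now I would exploit the crucial fact that $\Pk_m(\hat{E})$ is a finite-dimensional vector space, hence the norms $\|\cdot\|_{W^s_p(\hat{E})}$ and $\|\cdot\|_{L^q(\hat{E})}$ restricted to $\Pk_m(\hat{E})$ are equivalent, giving
\[
\|\hat{p}_m\|_{W^s_p(\hat{E})} \leq c(m,s,p,q)\, \|\hat{p}_m\|_{L^q(\hat{E})}.
\]
Substituting back the scaling identities and collecting the $h_E$ factors produces exactly the target estimate $\|p_m\|_{W^s_p(E)} \lesssim h_E^{3/p - 3/q - s}\|p_m\|_{L^q(E)}$, with a constant depending only on $m, s, p, q$ and $c_{\rm M}$.

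I do not expect a real obstacle: the statement is entirely classical and a textbook result (cf.\ \cite{brenner-scott:book}). The only mild bookkeeping point is that to bound the full norm $\|p_m\|_{W^s_p(E)}$ (rather than just the top seminorm) one needs to control all the seminorms $|p_m|_{W^r_p(E)}$ for $r \leq s$; this is harmless because each such seminorm is bounded by $h_E^{3/p - 3/q - r}\|p_m\|_{L^q(E)}$, and one absorbs the lower-order contributions into the top-order one using that $h_E$ is uniformly bounded by $\mathrm{diam}(\Omega)$, possibly at the cost of enlarging the implicit constant by a factor depending on $\Omega$ and $s$.
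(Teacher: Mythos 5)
Your argument is correct and is exactly the classical scaling-plus-finite-dimensional-norm-equivalence proof; the paper itself gives no proof of this lemma, simply citing \cite{brenner-scott:book}, where this is the standard argument. The only cosmetic remark is that absorbing the lower-order seminorms $|p_m|_{W^r_p(E)}$, $r<s$, into the top-order bound via $h_E\lesssim \mathrm{diam}(\Omega)$ makes the constant additionally depend on $\Omega$, which is harmless here (and avoidable by stating the estimate for the seminorm), since the paper's constant convention already allows dependence on $\Omega$.
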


\begin{remark}
\label{rm:jumpmedia}
For any face $f \in \Edges$, let $\jumpmedia{\cdot}_f$ denote the jump or the average operator on the face $f$.
We notice that for any $\mathbb{K} \in [L^{\infty}(\Omega_h)]^{3 \times 3}$ and for any $\ww \in \boldsymbol{H}^1(\Omega_h)$ and $\alpha \in \mathbb{Z}$, mesh assumption \cfan{(MA1)} and  Lemma \ref{lm:trace} yield the following estimate
\begin{equation}
\label{eq:utile-jump1}
\begin{aligned}
\sum_{f \in \Edges} h_f^\alpha \|\jumpmedia{\mathbb{K} \ww}_f\|^2_{f}
&\lesssim  
\sum_{E \in \Omega_h} h_E^\alpha \|\mathbb{K}\|^2_{L^\infty(E)} 
\sum_{f \in \EdgesE} \|\ww\|^2_{f}
\\
& \lesssim  
\sum_{E \in \Omega_h} \|\mathbb{K}\|^2_{L^\infty(E)}  
\left( h_E^{\alpha-1} \|\ww\|^2_{E} + h_E^{\alpha+1} \|\bnabla \ww\|^2_{E} \right) \,.
\end{aligned}
\end{equation}
In particular if $\ww \in [\Pk_m(\Omega_h)]^3$ by Lemma \ref{lm:inverse} it holds that
\begin{equation}
\label{eq:utile-jump}
\sum_{f \in \Edges}  h_f^\alpha \|\jumpmedia{\mathbb{K} \ww}_f\|^2_{f}
\lesssim  
\sum_{E \in \Omega_h}  h_E^{\alpha-1} \|\mathbb{K}\|^2_{L^\infty(E)}   \|\ww\|^2_{E} \,.
\end{equation}
\end{remark}

We close this section with the following instrumental result (for $k>1$ we refer to \cite[Lemma 3.2]{CIP}, whereas for $k=1$ we refer to \cite[Lemma 5.6]{MHD-linear}).

\begin{lemma}[]
\label{lm:cip}
Let Assumption \cfan{(MA1)} hold. Furthermore, if $k=1$ let also Assumption \cfan{(MA2)} hold. 
Let
\begin{equation}
\label{eq:mathbbO}
\mathbb{O}_{k-1}(\Omega_h) := 
\Pkc_{k-1}(\Omega_h)  \quad  \text{for $k>1$,}
\qquad
\mathbb{O}_{k-1}(\Omega_h) := \Pk_{0}(\widetilde{\Omega}_h)  \quad \text{for $k=1$.}
\end{equation}
There exists a projection operator $\intcip \colon \Pk_{k-1}(\Omega_h) \to \mathbb{O}_{k-1}(\Omega_h)$
such that for any $p_{k-1} \in \Pk_{k-1}(\Omega_h)$   the following holds:
\[
\sum_{E \in \Omega_h}h_E  \Vert (I- \intcip) p_{k-1} \Vert_E^2 \lesssim 
\sum_{f \in \EdgesI}
h_f^2 \Vert \jump{p_{k-1}}_f\Vert_f^2
\, .
\]
\end{lemma}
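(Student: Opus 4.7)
The plan is to construct $\intcip$ as an averaging-type quasi-interpolant and reduce the local $L^2$ error to a sum of face jumps by scaling and an inverse estimate. For $k>1$, where the target is $\Pkc_{k-1}(\Omega_h)$, I would define $\intcip p_{k-1}$ through its values at the Lagrange nodes of the continuous piecewise polynomial space of degree $k-1$: at each node $\xx$ lying on the inter-element skeleton of $\Omega_h$, set $(\intcip p_{k-1})(\xx)$ equal to the arithmetic mean of $p_{k-1}|_E(\xx)$ over all elements $E$ containing $\xx$, while at element-interior Lagrange nodes (if any) simply copy the value of $p_{k-1}$. This is the Oswald / Karakashian--Pascal averaging operator. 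For $k=1$, where the target is $\Pk_0(\widetilde{\Omega}_h)$, I would define $\intcip p_0$ on each macroelement $M \in \widetilde{\Omega}_h$ to be the mean value of $p_0$ on $M$ (equivalently, the value of $p_0$ on a chosen element of $\omegaEh$).

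For a fixed $E \in \Omega_h$, both $p_{k-1}|_E$ and $\intcip p_{k-1}|_E$ belong to a finite-dimensional polynomial space on $E$, so norm equivalence and volumetric scaling in three dimensions yield
\[
\Vert (I-\intcip)p_{k-1}\Vert_E^2 \lesssim h_E^{3}\,\max_{\xx \in E^{\ast}}\bigl|p_{k-1}|_E(\xx) - (\intcip p_{k-1})(\xx)\bigr|^2,
\]
where $E^{\ast}$ is the relevant finite set of nodes. By construction, each node discrepancy is a convex combination of pointwise differences $p_{k-1}|_E(\xx) - p_{k-1}|_{E'}(\xx)$ with $E,E'$ belonging to the same vertex star $\omegaz$ (respectively, to the same macroelement $M$ for $k=1$); connecting $E$ and $E'$ by a uniformly short chain of faces (of length bounded through \textbf{(MA1)} / \textbf{(MA2)}) and telescoping, each such difference is a finite sum of face values $\jump{p_{k-1}}_f(\xx)$. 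The inverse estimate of Lemma \ref{lm:inverse} on the 2D face $f$ gives $|\jump{p_{k-1}}_f(\xx)|^2 \lesssim h_f^{-2}\Vert\jump{p_{k-1}}_f\Vert_f^2$, and \textbf{(MP1)} yields $h_E \sim h_f$ on the relevant faces. Assembling these ingredients,
\[
h_E\Vert (I-\intcip)p_{k-1}\Vert_E^2 \lesssim h_E^{4}\sum_{f}h_f^{-2}\Vert\jump{p_{k-1}}_f\Vert_f^2 \lesssim \sum_{f}h_f^{2}\Vert\jump{p_{k-1}}_f\Vert_f^2,
\]
and a finite-overlap summation over $E$ gives the claim.

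The main obstacle I expect is the case $k=1$. A globally continuous piecewise polynomial of degree zero is forced to be a single global constant, so a genuine Oswald-type operator cannot land in a useful piecewise-polynomial space while approximating arbitrary elements of $\Pk_0(\Omega_h)$ well. This is precisely what forces the introduction of $\widetilde{\Omega}_h$ and the vertex-star agglomeration property of \textbf{(MA2)}: within each $M \in \widetilde{\Omega}_h$ the submesh $\omegaEh$ is connected by \emph{interior} faces of $\Omega_h$, whose jumps appear on the right-hand side, while jumps across inter-macroelement faces need not be controlled at all because $\intcip p_0$ is allowed to be discontinuous across them. For $k>1$ the argument is essentially that of \cite[Lemma 3.2]{CIP}; for $k=1$ it follows the macro-star construction of \cite[Lemma 5.6]{MHD-linear}, which is the more delicate half of the statement.
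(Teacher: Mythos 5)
Your proposal is correct and follows essentially the same route as the results the paper relies on: the paper gives no proof of Lemma \ref{lm:cip} itself but defers to \cite[Lemma 3.2]{CIP} for $k>1$ (precisely the Oswald/Karakashian--Pascal nodal-averaging argument with local scaling, nodal telescoping through face jumps, the face inverse estimate and \textbf{(MP1)} that you describe) and to \cite[Lemma 5.6]{MHD-linear} for $k=1$ (the piecewise-constant projection on the macroelements of $\widetilde{\Omega}_h$, where, as you correctly note, only the bounded, face-connected intra-macroelement chains guaranteed by \textbf{(MA2)} are needed and inter-macroelement jumps play no role). Your reconstruction of both halves is sound, including the locality of the weights $h_E$, $h_f^2$ that avoids any global quasi-uniformity assumption.
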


\section{Stabilized Finite Elements discretizations}
\label{sec:stab}

In this section we present the two stabilized methods here proposed (three-field and four-field) and prove some technical results that will be useful in the interpolation and convergence analysis of the forthcoming sections.
 Since the novelty of this contribution is in the space discretization, in our presentation and analysis we will focus on the time-continuous case. Clearly, a choice of a time-stepping time integrator will be taken in the numerical tests section.

\subsection{Discrete spaces and interpolation analysis}
\label{sub:spaces}

Let the integer $k \geq 1$ denote the order of the method. We consider the following discrete spaces
\begin{align}
\label{eq:spazi_3f}
\VVd := [\Pk_k(\Omega_h)]^3 \cap \HH_0(\diver, \Omega)\,, \,\,\,
\Qd  &:= \Pk_{k-1}(\Omega_h) \cap L^2_0(\Omega)\,, \,\,\,
\WWd := [\Pkc_k(\Omega_h)]^3 \cap \Hunoenne\,,
\\
\label{eq:spazi_4f}
\Rd  &:= \Pkc_k(\Omega_h) \cap L^2_0(\Omega),
\end{align}
approximating the velocity field space $\VVc$, the pressure space $\Qc$, the magnetic induction space $\WWc$ and the  lagrangian multiplier space (adopted only in the four-field scheme) respectively. 

Notice that in the proposed method we adopt the $\HH(\diver)$-conforming 
$\boldsymbol{{\rm BDM}}_k$ element \cite{BDM} for the approximation of the velocity space
that provides exact divergence-free discrete velocity,
and preserves the pressure-robustness of the resulting scheme \cite{Hdiv1,Hdiv2,Han}.
%
%
Let us introduce the discrete kernel
\begin{equation}
\label{eq:Z_h}
\ZZd := \{ \vv_h \in \VVd \quad \text{s.t.} \quad \diver \, \vv_h = 0  \}\,.
\end{equation}

We now define the interpolation operators $\PVVd$ and $\PWWd$, acting on the spaces $\VVd$ and $\WWd$ respectively, satisfying optimal approximation estimates and suitable \emph{local} orthogonality properties that will be instrumental to prove the convergence results 
(without the need to require a quasi-uniformity property on the mesh sequence, as it often happens in CIP stabilizations for nonlinear problems).  For what concerns the operator $\PVVd$, we recall from \cite{BDM,GSS} the  following result.

\begin{lemma}[Interpolation operator on $\VVd$]
\label{lm:int-v}
Under the Assumption \cfan{(MA1)} let $\PVVd \colon \VVc \to \VVd$ be the interpolation operator defined in equation (2.4) of \cite{BDM}.
The following hold

\noindent
$(i)$ if $\vv \in \ZZc$ then $\vvi \in \ZZd$;

\noindent
$(ii)$ for any $\vv \in \ZZc$
\begin{equation}
\label{eq:orth-v}
\left(\vv - \, \vvi, \,  \pp_{k-1} \right) = 0 \quad \text{for all $\pp_{k-1} \in [\Pk_{k-1}(\Omega_h)]^3$;}
\end{equation}

\noindent
$(iii)$ for any $\vv \in \VVc \cap \HH^{s+1}(\Omega_h)$, with $0 \leq s \leq k$ and for all $E \in \Omega_h$, it holds
\begin{equation}
\label{eq:int-v}
\vert \vv - \vvi \vert_{m,E} \lesssim h_E^{s+1-m} \vert \vv \vert_{s+1,E} 
\qquad \text{for $0\leq m\leq s+1$;}
\end{equation}
\noindent
$(iv)$ for any $\vv \in \VVc \cap \WW^1_\infty(\Omega_h)$ and for all $E \in \Omega_h$, it holds
\begin{equation}
\label{eq:int-v-inf}
\Vert \vv - \vvi \Vert_{\LL^\infty(E)} +
h_E \Vert \bnabla (\vv - \vvi) \Vert_{\LL^\infty(E)}  
 \lesssim h_E \Vert  \vv \Vert_{\WW^1_\infty(E)} \,.
\end{equation}
\end{lemma}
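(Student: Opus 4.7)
All four properties are classical (or near-classical) statements about the Brezzi--Douglas--Marini interpolation operator, and the plan is to verify each by unwinding the definition of $\PVVd$ through its degrees of freedom. Recall that on each element $E \in \Omega_h$ the BDM$_k$ DOFs are: (a) the face moments $\int_f (\vv \cdot \nn_f)\, p\, df$ for $p \in \Pk_k(f)$; (b) a set of interior moments against a complementary polynomial subspace within $[\Pk_k(E)]^3$ chosen so that the union with (a) is unisolvent. The global $\HH(\diver,\Omega)$-conformity (matching normal traces across interior faces and vanishing on $\partial \Omega$) is built into (a). The proofs of the four items will proceed in the order (i), (iii)--(iv), (ii), with the last being the genuinely delicate one.

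For (i), the plan is to invoke the standard commuting diagram $\bdiver \PVVd \vv = \Pzerok{k-1}(\bdiver \vv)$, which is proved by testing $\bdiver(\vv - \vvi)$ against an arbitrary $q \in \Pk_{k-1}(E)$, integrating by parts elementwise, and using the face DOFs to kill the boundary term and the interior moments (which include the gradients of $\Pk_{k-1}$) to kill the volume term. Consequently, $\bdiver \vv = 0$ forces $\bdiver \vvi = 0$; combined with the conformity discussed above, this gives $\vvi \in \ZZd$. For (iii) and (iv), the plan is routine: polynomial preservation $\PVVd \pp = \pp$ for $\pp \in [\Pk_k(E)]^3$ combined with the local stability of $\PVVd$ (obtained via trace/inverse inequalities, Lemmas \ref{lm:trace} and \ref{lm:inverse}, and a scaling-to-reference-element argument) yields, through Bramble--Hilbert (Lemma \ref{lm:bramble}), the bounds \eqref{eq:int-v} and \eqref{eq:int-v-inf}. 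These are well-trodden BDM estimates and do not require any new ingredient.

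The main obstacle, and the only non-routine part, is (ii). The plan is the following: fix $\vv \in \ZZc$, so that by (i) the difference $\vv - \vvi$ is exactly divergence-free, and split an arbitrary test polynomial elementwise as $\pp_{k-1}|_E = \nabla q + \boldsymbol{r}$ with $q \in \Pk_k(E)$ and $\boldsymbol{r}$ in a complementary (curl-type) polynomial subspace. The gradient part is handled by integration by parts over $E$: the volume term $-(\bdiver(\vv - \vvi),q)_E$ vanishes by the exact divergence-free property, while the boundary term $\int_{\partial E}(\vv-\vvi)\cdot \nn_f \, q\, df$ vanishes by the face moments in (a), since $q|_f \in \Pk_k(f)$. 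The rotational part $\boldsymbol{r}$ is then annihilated by the interior DOFs — this is the step where the specific choice of interior moments in the GSS-type construction is essential, and the verification that the complementary subspace is indeed spanned by the interior DOFs is the key point we would lift from \cite{GSS}. Once both parts are summed over $E \in \Omega_h$, the desired global orthogonality \eqref{eq:orth-v} follows.
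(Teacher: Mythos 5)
Your sketch is correct, and it is essentially the standard degree-of-freedom argument behind this lemma; note that the paper itself gives no proof at all — it simply recalls the result from \cite{BDM,GSS} — so your reconstruction fills in precisely what the paper delegates to those references, rather than diverging from it. The one step you defer is the only genuinely delicate point: for the $\boldsymbol{{\rm BDM}}_k$ element the interior moments are taken against the first-kind N\'ed\'elec space $\boldsymbol{N}_{k-1}(E)$, and the decomposition $[\Pk_{k-1}(E)]^3=\boldsymbol{N}_{k-1}(E)\oplus\nabla\widetilde{\Pk}_k(E)$ is exactly what makes your argument for \eqref{eq:orth-v} close (the N\'ed\'elec part is annihilated by the interior DOFs, and for the gradient part observe that, since the potential lies in $\Pk_k(E)$ and not $\Pk_{k-1}(E)$, the volume term needs $\diver(\vv-\vvi)=0$ exactly — which you correctly secure through item $(i)$ — rather than mere orthogonality of the divergence error to $\Pk_{k-1}(E)$).
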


Concerning the approximation property of the operator $\PWWd$ we state the following Lemma and refer to \cite[Lemma 4.3]{MHD-linear} for the proof. 

\begin{lemma}[Interpolation operator on $\WWd$]
\label{lm:int-i}
Let Assumption \cfan{(MA1)} hold. Furthermore, if $k=1$ let also Assumption \cfan{(MA2)} hold.
Then there exists an interpolation operator $\PWWd \colon \WWc \to \WWd$
satisfying the following 

\noindent
$(i)$ referring to \eqref{eq:mathbbO}, for any $\HH \in \WWc$
\begin{equation}
\label{eq:int-orth}
\left( \HH - \HHi, \, \qq_{k-1} \right) = 0 
\qquad
\text{for any $\qq_{k-1} \in [\mathbb{O}_{k-1}(\Omega_h)]^3$;}
\end{equation}

\noindent
$(ii)$ for any $\HH \in \WWc \cap \HH^{s+1}(\Omega_h)$ with $0 \leq s \leq k$, for $\alpha=0,1,2$, it holds

\begin{equation}
\label{eq:err-int}
\sum_{E \in \Omega_h} h_E^{-\alpha} \Vert \HH - \HHi \Vert_E^2
\lesssim h^{2s+2-\alpha} \vert \HH \vert_{s+1,\Omega_h}^2 \,,
\quad
\Vert \bnabla (\HH - \HHi) \Vert \lesssim h^{s} \vert \HH \vert_{s+1,\Omega_h} \,. 
\end{equation}
\end{lemma}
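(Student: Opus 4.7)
The plan is to construct $\PWWd$ as a Scott-Zhang-type quasi-interpolant corrected by a small element of $\WWd$ that restores the orthogonality in (i). Let $\mathcal{I}_{\rm SZ} : \WWc \to \WWd$ be a Scott-Zhang operator, adapted (by tangential averaging at boundary nodes) to preserve the constraint $\vv \cdot \nn = 0$ on $\partial \Omega$. Under \cfan{(MA1)} it satisfies the standard local estimates
\[
\Vert \HH - \mathcal{I}_{\rm SZ} \HH \Vert_{m, E} \lesssim h_E^{s+1-m}\, \vert \HH \vert_{s+1, \widetilde\omega_E}, \qquad 0 \leq m \leq s+1,
\]
on a mildly enlarged patch $\widetilde\omega_E \supseteq E$. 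I would then set $\PWWd \HH := \mathcal{I}_{\rm SZ} \HH + \boldsymbol{\delta}_h$, with the correction $\boldsymbol{\delta}_h \in \WWd$ determined by
\[
(\boldsymbol{\delta}_h, \qq_{k-1}) = (\HH - \mathcal{I}_{\rm SZ} \HH, \qq_{k-1}) \qquad \text{for all } \qq_{k-1} \in [\mathbb{O}_{k-1}(\Omega_h)]^3,
\]
once $\boldsymbol{\delta}_h$ is restricted to the subspace of $\WWd$ spanned by the Fortin test functions constructed below.

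For the Fortin construction I proceed locally. When $k \geq 2$, to each interior vertex $\zz \in \mathcal{N}_h$ I attach the $P_k$ nodal hat function $\phi_{\zz}$ supported on $\omegaz$; given a test $\qq_{k-1} \in [\Pkc_{k-1}(\Omega_h)]^3$, I form $\ww_h \in \WWd$ as a weighted sum of these hats whose weights come from a local $L^2$-projection of $\qq_{k-1}$, yielding by standard scaling $(\ww_h, \qq_{k-1}) \gtrsim \Vert \qq_{k-1}\Vert^2$ and $\Vert \ww_h\Vert \lesssim \Vert \qq_{k-1}\Vert$. When $k = 1$, because $\mathbb{O}_0 = \Pk_0(\widetilde{\Omega}_h)$ lives on the macromesh, I invoke \cfan{(MA2)}(ii) to pick in each $M \in \widetilde{\Omega}_h$ an interior star $\omegaz \subseteq M$ and pair the constant value $\qq_0\vert_M$ against the hat function $\phi_{\zz} \in \WWd$. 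Assembling these local ingredients produces a global Fortin operator with stability constants depending only on $c_{\rm{M}}$ and $\widetilde{c}_{\rm{M}}$.

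The inf-sup then delivers $\Vert \boldsymbol{\delta}_h\Vert_E \lesssim h_E^{s+1}\vert \HH \vert_{s+1, \widehat\omega_E}$ on a slightly enlarged patch $\widehat\omega_E$, obtained by testing with the Fortin function associated to $\Pzerok{k-1}(\HH - \mathcal{I}_{\rm SZ}\HH)$; Lemma \ref{lm:inverse} then yields $\Vert \bnabla \boldsymbol{\delta}_h\Vert_E \lesssim h_E^{s}\vert \HH \vert_{s+1, \widehat\omega_E}$. The triangle inequality combined with the Scott-Zhang bounds gives both estimates in (ii): for $\alpha \in \{0,1,2\}$ I multiply the elementwise $L^2$ error by $h_E^{-\alpha}$, sum over $E$ using finite overlap of the patches, and exploit $h_E \leq h$; the gradient bound follows from the $m=1$ Scott-Zhang estimate together with the inverse estimate on $\boldsymbol{\delta}_h$. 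The orthogonality (i) is imposed by construction.

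The hard part will be the $k=1$ case: the natural candidate $\Pkc_0(\Omega_h)$ collapses to the global constants on a connected domain, so the macroelement substitute $\Pk_0(\widetilde{\Omega}_h)$ is genuinely needed and the Fortin argument leans crucially on \cfan{(MA2)}. Keeping the cardinality of the enlarged patches $\widehat\omega_E$ uniformly bounded (so that no approximation order is lost), and verifying that the tangential-averaging Scott-Zhang at the boundary interacts cleanly with the subsequent correction, is the most delicate bookkeeping in the proof.
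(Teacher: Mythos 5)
First, note that the paper does not prove this lemma internally: it defers entirely to \cite[Lemma 4.3]{MHD-linear}, so the comparison is with that cited construction. Your overall strategy (a boundary-compatible quasi-interpolant plus a locally supported correction enforcing the orthogonality, with the macroelement space $\Pk_0(\widetilde{\Omega}_h)$ and Assumption \cfan{(MA2)} entering precisely for $k=1$) is in the same spirit, and your $k=1$ treatment --- pairing the constant on each $M\in\widetilde{\Omega}_h$ with the hat function of an interior vertex $\zz$ with $\omegaz\subseteq M$, which makes the correction system block-diagonal and local --- is essentially the right mechanism.

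The genuine gap is in the $k\ge 2$ case. Your correction space is spanned only by basis functions attached to \emph{interior} vertices, while the test space $[\Pkc_{k-1}(\Omega_h)]^3$ carries \emph{no} boundary conditions and has degrees of freedom at boundary vertices (and boundary edge/face nodes). Under \cfan{(MA1)} alone the interior-vertex stars $\omegaz$ need not cover $\Omega$: conforming tetrahedral meshes of a polyhedron routinely contain elements all of whose vertices lie on $\partial\Omega$ (e.g.\ near edges and corners of the cube used in the numerical tests), and on such elements every interior-vertex hat function vanishes identically. Consequently the claimed Fortin bounds $(\ww_h,\qq_{k-1})\gtrsim \Vert\qq_{k-1}\Vert^2$ with $\Vert\ww_h\Vert\lesssim\Vert\qq_{k-1}\Vert$ cannot hold uniformly: a $\qq_{k-1}$ whose mass concentrates on those uncovered elements is (nearly) invisible to your correction space, and more bluntly the correction space has too few degrees of freedom adjacent to the boundary to match the test space, so the square system you use to define $\boldsymbol{\delta}_h$ is not solvable in general and the exact orthogonality \eqref{eq:int-orth} is not achieved. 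Fixing this requires corrections associated with boundary-adjacent degrees of freedom that are still admissible in $\WWd$ (e.g.\ tangential nodal directions at boundary nodes, or non-vertex Lagrange nodes on interior edges/faces of boundary elements), together with a genuinely local solvability argument there; this boundary bookkeeping, which you flag only for the Scott--Zhang step, is exactly where the construction is delicate and where your argument as written would fail. A secondary, smaller point: the locality of $\boldsymbol{\delta}_h$ (needed for the patchwise bounds with the negative powers $h_E^{-\alpha}$ in \eqref{eq:err-int}) is immediate only when the correction system is block-diagonal, as in your $k=1$ case; for $k\ge2$ the overlapping stars couple the unknowns and you would still need to argue a uniformly bounded, localized inverse rather than invoke the inf-sup globally.
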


Finally we state the following useful lemma.
Analogous result can be found in \cite[Corollary 1]{BF:2007} and \cite{bertoluzza,Crouzeix}, but we here prefer to derive a simpler proof in the present less general context.

\begin{lemma}(Interpolation operator on $\Pkc_k(\Omega_h)$)
\label{lm:Pt}
Let $s,p \in\R$ such that $sp > 3$ and let $\Pt \colon W^s_p(\Omega) \to \Pkc_k(\Omega_h)$  be the Lagrangian interpolator on the space $\Pkc_k(\Omega_h)$.
Under the Assumption \cfan{(MA1)} the following hold

\noindent 
$(i)$ there exists a real positive constant $\clemma$ such that
for any $u \in H^{3/2 + \epsilon}(\Omega) \cap W^1_\infty(\Omega_h)$ for $\epsilon>0$ and for any $S\subseteq \Omega_h$ it holds
\begin{equation}
\label{eq:Pt2}
\begin{gathered}
\Vert \Pt u \Vert_{L^\infty(\cup_{E \in S}E)} \leq \clemma
\Vert  u \Vert_{L^\infty(\cup_{E \in S}E)}  \,,
\qquad
\Vert \Pt u \Vert_{W_1^\infty(S)} \leq \clemma
\Vert  u \Vert_{W_1^\infty(S)} \,,
\\
\Vert u - \Pt u\Vert_{L^\infty(E)} + h_E \Vert \nabla(u - \Pt u) \Vert_{L^\infty(E)}
\lesssim h_E \Vert u \Vert_{W^1_\infty(E)} \,;
\end{gathered}
\end{equation}

\noindent
$(ii)$ for any $u \in H^{3/2 + \epsilon}(\Omega) \cap W^1_\infty(\Omega_h)$ for $\epsilon >0$ and $v_h \in \Pkc_k(\Omega_h)$ it holds
\begin{align}
\label{eq:Pt3}
\sum_{E \in \Omega_h} h_E^{-2} \Vert uv_h - \Pt(u v_h)\Vert_{E}^2 \lesssim \Vert  u \Vert_{W^1_\infty(\Omega_h)}^2 \, \Vert v_h \Vert^2 \,,
\\
\label{eq:Pt4}
\sum_{E \in \Omega_h} h_E^{-2}
\Vert \Pt(u v_h) - (\Pt u)v_h\Vert_{E}^2 \lesssim  \Vert  u \Vert_{W^1_\infty(\Omega_h)}^2 \, \Vert v_h \Vert^2 \,.
\end{align}
\end{lemma}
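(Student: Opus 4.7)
The plan is to establish (i) by classical scaling arguments from a reference tetrahedron, and to reduce (ii) to (i) via the local polynomial reproduction of $\Pt$ combined with an inverse inequality. For (i), the hypothesis $sp>3$ guarantees $W^s_p(\Omega)\hookrightarrow C^0(\overline\Omega)$, so nodal evaluations are meaningful. On the reference element $\hat E$, $\widehat{\Pt}$ is a bounded linear combination of nodal values, giving $\|\widehat{\Pt}\hat u\|_{L^\infty(\hat E)}\le C\|\hat u\|_{L^\infty(\hat E)}$ with $C$ depending only on $k$; pulling back through the affine map $F_E\colon\hat E\to E$ and invoking \cfan{(MA1)} yields the $L^\infty$ bound element by element, hence on $\cup_{E\in S}E$. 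For the $W^1_\infty$ stability I exploit that $\Pt$ reproduces constants: writing $\Pt u = c + \Pt(u-c)$ with $c$ a mean value of $u$ on $E$, the same scaling yields $h_E\|\nabla\Pt u\|_{L^\infty(E)}\lesssim \|u-c\|_{L^\infty(E)}\lesssim h_E\|\nabla u\|_{L^\infty(E)}$, the last step by a Poincar\'e-type estimate. The pointwise error bound in \eqref{eq:Pt2} is the standard Bramble--Hilbert argument on $\hat E$ plus the usual scaling.

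For \eqref{eq:Pt3}, I use that $\Pt$ preserves polynomials of degree $\le k$ locally, so for any $q\in\Pk_k(E)$,
\[
\|uv_h-\Pt(uv_h)\|_{L^\infty(E)}=\|(uv_h-q)-\Pt(uv_h-q)\|_{L^\infty(E)}\lesssim \|uv_h-q\|_{L^\infty(E)},
\]
the last inequality using the $L^\infty$ stability just obtained. Choosing $q=\overline{u}_E\,v_h|_E\in\Pk_k(E)$, with $\overline{u}_E$ a mean value of $u$ over $E$, yields $uv_h-q=(u-\overline{u}_E)v_h$, and the bound $\|u-\overline{u}_E\|_{L^\infty(E)}\lesssim h_E\|u\|_{W^1_\infty(E)}$ gives $\|uv_h-\Pt(uv_h)\|_{L^\infty(E)}\lesssim h_E\|u\|_{W^1_\infty(E)}\|v_h\|_{L^\infty(E)}$. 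Multiplying by $|E|^{1/2}$, applying the inverse inequality $\|v_h\|_{L^\infty(E)}|E|^{1/2}\lesssim \|v_h\|_E$ from Lemma \ref{lm:inverse}, squaring, dividing by $h_E^2$, and summing over $E\in\Omega_h$ yields \eqref{eq:Pt3}.

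Finally, \eqref{eq:Pt4} follows from the triangle inequality applied to the decomposition
\[
\Pt(uv_h)-(\Pt u)v_h=\bigl[\Pt(uv_h)-uv_h\bigr]+(u-\Pt u)v_h,
\]
controlling the first summand by \eqref{eq:Pt3} and the second by $\|(u-\Pt u)v_h\|_E\le \|u-\Pt u\|_{L^\infty(E)}\|v_h\|_E\lesssim h_E\|u\|_{W^1_\infty(E)}\|v_h\|_E$, which, after division by $h_E^2$ and summation, gives the desired bound. The only real subtlety is the careful bookkeeping of powers of $h_E$ when passing between $L^\infty$ and $L^2$ estimates on $v_h$ via the inverse inequality; once the polynomial-preservation-plus-product-rule mechanism is set up, every step is routine and all constants depend only on $k$, $\clemma$, and the shape-regularity constant $c_{\rm{M}}$.
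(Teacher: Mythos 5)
Your proposal is correct and follows essentially the same route as the paper: item $(i)$ is the classical scaling/Bramble--Hilbert argument (which the paper simply cites), \eqref{eq:Pt3} rests on the fact that the interpolant reproduces $(\text{local constant})\cdot v_h$ so that only $(u-\text{const})\,v_h$ must be estimated, and \eqref{eq:Pt4} is obtained by exactly the paper's triangle-inequality splitting together with the pointwise bound from \eqref{eq:Pt2}. The only (immaterial) difference is in \eqref{eq:Pt3}: you work in $L^\infty$ using the stability of $\Pt$ and then pass to $L^2$ via the inverse inequality, whereas the paper subtracts $\Pi_0 u$ and bounds the (element-wise) interpolant of $\widetilde u\,v_h$ directly in $L^2$ through its nodal values — the two mechanisms are equivalent.
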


\begin{proof}
Item $(i)$ is a classical result in finite elements theory
(see for instance \cite{quarteroni-valli:book}).
We now prove bound \eqref{eq:Pt3}.
Let $\Pth \colon W^s_p(\Omega_h) \to \Pk_k(\Omega_h)$ be the piece-wise discontinuous counterpart of $\Pt$, let $\widetilde{u} := u - \Pi_0 u$ and $v_h \in \Pkc_k(\Omega_h)$.
Then, being $\Pth((\Pi_0 u) v_h) = (\Pi_0 u) v_h \in \Pk_k(\Omega_h)$,
employing Lemma \ref{lm:bramble} and Lemma \ref{lm:inverse},  we have 
\[
\begin{aligned}
\sum_{E \in \Omega_h} &h_E^{-2} \Vert uv_h - \Pt(u v_h)\Vert_{E}^2 =
\sum_{E \in \Omega_h} h_E^{-2} \Vert uv_h - \Pth(u v_h)\Vert_{E}^2 
\\
&= \sum_{E \in \Omega_h} h_E^{-2} \Vert \widetilde{u} v_h - \Pth(\widetilde{u} v_h)\Vert_{E}^2
\lesssim \sum_{E \in \Omega_h} h_E^{-2} \left(\Vert \widetilde{u} v_h \Vert_E^2  +
\Vert \Pth(\widetilde{u} v_h)\Vert_{E}^2 \right)
\\
& \lesssim \sum_{E \in \Omega_h} h_E^{-2} 
\Vert \widetilde{u}\Vert_{L^\infty(E)}^2 
\left( \Vert v_h \Vert_E^2  +
|E| \,  \max_{\xx \in \mathcal{N}_h^E} |v_h(\xx)|^2 \right)
\\
& \lesssim \sum_{E \in \Omega_h} \Vert u \Vert_{W^1_\infty(E)}^2 
\left( \Vert v_h \Vert_E^2  + |E| \, \Vert v_h \Vert_{L^\infty(E)}^2 \right) 
\\
& \lesssim \sum_{E \in \Omega_h} \Vert u \Vert_{W^1_\infty(E)}^2  \Vert v_h \Vert_E^2
\leq
\Vert u \Vert_{W^1_\infty(\Omega_h)}^2  \Vert v_h \Vert^2 \,.
\end{aligned}
\]
Regarding  \eqref{eq:Pt4}, from \eqref{eq:Pt2} and \eqref{eq:Pt3} we infer
\[
\begin{aligned}
\sum_{E \in \Omega_h} &h_E^{-2} \Vert \Pt(uv_h) - (\Pt u) v_h\Vert_{E}^2 \lesssim
\sum_{E \in \Omega_h} h_E^{-2} \left(
\Vert  u v_h - \Pt(uv_h)\Vert_{E}^2 +
\Vert  u v_h - (\Pt u) v_h\Vert_{E}^2  \right)
\\
&\lesssim
\Vert u \Vert_{W^1_\infty(\Omega_h)}^2  \Vert v_h \Vert^2 + 
 \sum_{E \in \Omega_h} h_E^{-2} \Vert u - \Pt u \Vert_{L^\infty(E)}^2  \Vert v_h\Vert_{E}^2
\\
& \lesssim 
\Vert u \Vert_{W^1_\infty(\Omega_h)}^2  \Vert v_h \Vert^2 +
\sum_{E \in \Omega_h} \Vert u \Vert_{W^1_\infty(E)}^2 \Vert v_h\Vert_{E}^2
\lesssim 
\Vert u \Vert_{W^1_\infty(\Omega_h)}^2  \Vert v_h \Vert^2 \,.
\end{aligned}
\] 
\end{proof}

\begin{remark}
\label{rm:jumpmediaeei}
With the same notations of Remark \ref{rm:jumpmedia}, 
for any $\vv \in \VVc \cap \HH^{s+1}(\Omega_h)$ and for any 
$\HH \in \WWc \cap \HH^{s+1}(\Omega_h)$
 with $0 \leq s \leq k$
combining \eqref{eq:utile-jump1} with \eqref{eq:int-v} or \eqref{eq:err-int}, for $\alpha = -1, 0, 1$, the following hold
\begin{align}
\label{eq:utilejump2}
\sum_{f \in \Edges} h_f^\alpha \|\jumpmedia{\vv - \vvi}_f\|^2_{f} +
\sum_{f \in \Edges} h_f^{\alpha+2} \|\jumpmedia{\bnabla_h(\vv - \vvi)}_f\|^2_{f}
&\lesssim   
h^{2s + 1 + \alpha} \vert \vv \vert_{s+1,\Omega_h}^2 \,,
\\
\label{eq:utilejump2-h}
\sum_{f \in \Edges} h_f^\alpha \|\HH - \HHi_f\|^2_{f} 
&\lesssim   
h^{2s + 1 + \alpha} \vert \HH \vert_{s+1,\Omega_h}^2 \,.
\end{align}
\end{remark}

We note that the (possible) negative powers of $h_E$ on the left hand side of the above results express the locality of the estimates, which concur in avoiding a quasi-uniformity mesh assumption in our analysis.

\begin{remark}
\label{rm:support}
Let $\pp_m \in [\Pk_m(\Omega_h)]^3$, $\uu \in \VVc \cap \WW^1_\infty(\Omega_h)$ and let $\widetilde{\uu}$ denote $\uui$ (resp. $\Pi_0 \uu$). 
Then employing  \eqref{eq:int-v-inf} (resp. Lemma \ref{lm:bramble}) and  Lemma \ref{lm:inverse} we infer 
\begin{equation}
\label{eq:support}
\begin{aligned}
\Vert |\bnabla_h \pp_m| \, |\uu - \widetilde{\uu}| \Vert^2
&=
\sum_{E \in \Omega_h}
\Vert |\bnabla_h \pp_m| \, |\uu - \widetilde{\uu}|  \Vert_E^2
\leq 
\sum_{E \in \Omega_h}
\Vert \uu - \widetilde{\uu} \Vert_{\LL^\infty(E)}^2 \, 
\Vert \bnabla \pp_m \Vert_E^2 
\\ 
&\lesssim
\sum_{E \in \Omega_h}
h_E^2 \Vert \bnabla_h \uu \Vert_{\LL^\infty(E)}^2 \, 
h_E^{-2} \Vert \pp_m \Vert_E^2 
\lesssim
\Vert  \uu \Vert_{\WW^1_\infty(\Omega_h)}^2 \, 
\Vert \pp_m \Vert^2 \,.
\end{aligned} 
\end{equation}
Analogous result can be obtained replacing $\uu$ with $\BB \in \WWc \cap \WW^1_\infty(\Omega_h)$ and $\widetilde{\uu}$ with $\widetilde{\BB}= \Pi_0 \BB$. 
\end{remark}

\subsection{Discrete forms}
\label{sub:forms}

In the present section we define the discrete forms at the basis of the proposed stabilized schemes.
Let $\epsilon>0$ and
\begin{equation}
\label{eq:stars}
\VVs := \VVc \cap \HH^{3/2+\epsilon}(\Omega) \,,
\qquad
\ZZs := \ZZc \cap \HH^{3/2+\epsilon}(\Omega) \,,
\qquad
\WWs := \WWc \cap \HH^{3/2+\epsilon}(\Omega) \,.
\end{equation}
Due to the coupling between fluid-dynamic equation and magnetic equation, in addition to the classical upwinding, in the proposed schemes we consider extra stabilizing forms (in the spirit of continuous interior penalty \cite{BFH:2006,CIP}) that penalize the jumps and the gradient jumps along the convective directions $\uu_h$ and $\BB_h$. 
Rearranging in the non-linear setting the formulation in \cite{MHD-linear}, we here consider several forms.

\noindent
$\bullet$ DG counterparts of the continuous forms in \eqref{eq:forme_c1}. Let
\[
\ash(\cdot, \cdot) \colon (\VVs \oplus \VVd) \times \VVd \to \R,
\quad
c_h(\cdot; \cdot, \cdot) \colon (\VVs \oplus \VVd) \times (\VVs \oplus \VVd) \times \VVd(\Omega_h) \to \R,
\] 
be defined respectively by
\begin{equation}
\label{eq:forme_d}
\begin{aligned}
\ash(\uu,  \vv_h) &:=  (\beps_h(\uu) ,\, \beps_h(\vv_h))
- \sum_{f \in \Edges} (\media{\beps_h(\uu)\nn_f}_f ,\, \jump{\vv_h}_f)_f  +
\\
&- \sum_{f \in \Edges} (\jump{\uu}_f ,\, \media{\beps_h(\vv_h) \nn_f}_f)_f 
+ 
\bdma \sum_{f \in \Edges} h_f^{-1} (\jump{\uu}_f ,\,\jump{\vv_h}_f)_f
\\
c_h(\cc; \uu, \vv_h) &:=  (( \bnabla_h \uu ) \, \cc, \, \vv_h )
- \sum_{f \in \EdgesI} ( (\cc \cdot \nn_f) \jump{\uu}_f ,\, \media{\vv_h}_f)_f +
\\
& + 
\bdmc \sum_{f \in \EdgesI} (\vert \cc \cdot \nn_f \vert \jump{\uu}_f, \, \jump{\vv_h}_f )_f
\end{aligned}
\end{equation}
where the penalty parameters $\bdma$ and $\bdmc$ have to be sufficiently large in order to guarantee the coercivity of the form $\ash(\cdot, \cdot)$  and the stability effect in the convection dominated regime due to the upwinding \cite{GSS,Hdiv2,upwinding, DiPietro, Han}.

\noindent 
$\bullet$ Stabilizing CIP form that penalizes the jumps and the gradient jumps along the convective directions $\BB_h$. Let 
\[
J_h(\cdot; \cdot, \cdot) \colon \WWd \times (\VVs \oplus \VVd) \times \VVd \to \R
\]
be the  form defined by
\begin{equation}
\label{eq:J}
\begin{aligned}
&J_h(\TT; \uu, \vv_h) :=   
\\
& \sum_{f \in \EdgesI} \max\{\Vert \TT\Vert_{L^\infty(\omega_f)}^2, 1 \}
\left( 
\bdmJU ( \jump{\uu}_f,  \jump{\vv_h}_f )_f + 
\bdmJD h_f^2  (\jump{\bnabla_h \uu}_f, \jump{\bnabla_h \vv_h}_f)_f 
\right)\,.
\end{aligned}
\end{equation}

\noindent 
$\bullet$ Stabilizing CIP form that penalizes the gradient jumps along the convective directions $\uu_h$, this form is needed only in the four-field formulation \eqref{eq:fem-4f}. Let 
\[
K_h(\cdot; \cdot, \cdot) \colon \VVd \times (\WWs \oplus \WWd) \times  \WWd \to \R
\]
be the form defined by
\begin{equation}
\label{eq:K}
K_h(\cc; \BB, \HH_h) :=   
\bdmK \sum_{f \in \EdgesI} h_f^2 \max \{\Vert \cc\Vert_{L^\infty(\omega_f)}^2, 1 \}
(\jump{\bnabla \BB}_f, \jump{\bnabla \HH_h}_f)_f \,.
\end{equation}

\noindent
$\bullet$ Stabilizing form for the multipliers needed only in the four-field formulation \eqref{eq:fem-4f}. Let
\[
Y_h(\cdot, \cdot) \colon \Pkc_k(\Omega_h) \times  \Pkc_k(\Omega_h) \to \R
\]
defined by
\begin{equation}
\label{eq:Y}
Y_h(\phi_h, \psi_h) :=   
\bdmY \sum_{f \in \EdgesI} 
h_f^2  (\jump{\nabla \phi_h}_f, \jump{\nabla \psi_h}_f)_f \,.
\end{equation}
In \eqref{eq:J},  \eqref{eq:K} and \eqref{eq:Y}, $\bdmJU$, $\bdmJD$, $\bdmK$ and $\bdmY$ are user-dependent (positive) parameters. 

\begin{remark}
\label{rm:parametri}
The positive parameters $\bdmc$, $\bdmJU$, $\bdmJD$, $\bdmK$ fixed once and for all, 
are introduced in order to allow some tuning of the different stabilizing terms.
Since such uniform parameters do not affect the theoretical derivations, for the time being we set all the parameters equal to $1$.
We will be more precise about the practical values of such constants  in the numerical tests section.
\end{remark}


%


\subsection{Discrete three-field scheme}
\label{sub:scheme3f}

Referring to the spaces \eqref{eq:spazi_3f}, the forms \eqref{eq:forme_c2}, \eqref{eq:forme_d}, \eqref{eq:J}, the stabilized three-field method for the MHD equation is given by: find
\begin{itemize}
\item $\uu_h \in L^\infty(0, T; \VVd)$,
\item $p_h \in L^2(0, T; \Qd)$,
\item $\BB_h \in L^\infty(0, T; \WWd)$, 
\end{itemize}
such that for a.e. $t \in I$
\begin{equation}
\label{eq:fem-3f}
\left\{
\begin{aligned}
\left(\partial_t \uu_h, \vv_h\right) + \ns \ash(\uu_h, \vv_h) + c_h(\uu_h; \uu_h, \vv_h) + &
\\ 
-d(\BB_h; \BB_h, \vv_h)   + J_h(\BB_h; \uu_h, \vv_h) + b(\vv_h, p_h) 
&= 
(\ff, \vv_h)  
&\,\,\, & \text{$\forall \vv_h \in \VVd$,} 
\\
b(\uu_h, q_h) &= 0 
&\,\,\, & \text{$\forall q_h \in \Qd$,}
\\
\left(\partial_t \BB_h, \HH_h\right) + \nm \am(\BB_h, \HH_h) + d(\BB_h; \HH_h, \uu_h)  + &
\\ 
+(\diver \, \BB_h, \diver \, \HH_h)&=(\GG, \HH_h)  
&\,\,\, & \text{$\forall \HH_h \in \WWd$,}
\end{aligned}
\right.
\end{equation}
coupled with initial conditions (cf. Lemma \ref{lm:int-v} and Lemma \ref{lm:int-i})
\begin{equation}
\label{eq:ini cond-d}
\uu_h(\cdot, 0) = \uu_{h,0}:= {\cal I}_{\VVc}\uu_0\,, \quad
\BB_h(\cdot, 0) = \BB_{h,0}:={\cal I}_{\WWc}\BB_0\,.
\end{equation}
Notice that \eqref{eq:int-v} and \eqref{eq:err-int} easily imply
\begin{equation}
\label{eq:initial-bound}
\|\uu_{h,0}\| \lesssim \|\uu_{0}\| \,,
\qquad
\|\BB_{h,0}\| \lesssim \|\BB_{0}\| \,.
\end{equation}

In Section \ref{sec:theo-3f}  we assess the quasi-robustness of the scheme \eqref{eq:fem-3f} by deriving ``optimal'' (in the sense of best approximation) $h^k$ error estimates in a suitable discrete norm,  which do not degenerate for small values of the diffusion parameters $\ns$, $\nm$.


\subsection{Discrete  four-field scheme}
\label{sub:scheme4f}

This alternative approach does not rely only on the time derivative of $B_h$ in order to impose the solenoidal condition, but enforces it more strongly through a Lagrange multiplier. 
Referring to the spaces \eqref{eq:spazi_3f} and \eqref{eq:spazi_4f}, the forms \eqref{eq:forme_c2}, \eqref{eq:forme_d}, \eqref{eq:J}, \eqref{eq:K}, \eqref{eq:Y}, the stabilized four-field method for the MHD equation is given by: find
\begin{itemize}
\item $\uu_h \in L^\infty(0, T; \VVd)$, 
\item $p_h \in L^2(0, T; \Qd)$,
\item $\BB_h \in L^\infty(0, T; \WWd)$, 
\item $\phi_h  \in L^2(0, T; \Rd)$,
\end{itemize}
such that for a.e. $t \in I$
\begin{equation}
\label{eq:fem-4f}
\left\{
\begin{aligned}
\left(\partial_t \uu_h, \vv_h\right) + \ns \ash(\uu_h, \vv_h) + c_h(\uu_h; \uu_h, \vv_h) + &
\\ 
-d(\BB_h; \BB_h, \vv_h)   + J_h(\BB_h; \uu_h, \vv_h) + b(\vv_h, p_h) 
&= 
(\ff, \vv_h)  
&\,\,\, & \text{$\forall  \vv_h \in \VVd$,} 
\\
b(\uu_h, q_h) &= 0 
&\,\,\, & \text{$\forall q_h \in \Qd$,}
\\
\left(\partial_t \BB_h, \HH_h\right) + \nm \am(\BB_h, \HH_h) + d(\BB_h; \HH_h, \uu_h)  + &
\\ 
+ K_h(\uu_h; \BB_h, \HH_h) - b(\HH_h, \phi_h)
&= 
(\GG, \HH_h)  
&\,\,\, & \text{$\forall  \HH_h \in \WWd$,}
\\
Y_h(\phi_h, \psi_h) + b(\BB_h, \psi_h)
&= 0 
&\,\,\, & \text{$\forall \psi_h \in \Rd$,}
\end{aligned}
\right.
\end{equation}
coupled with initial conditions \eqref{eq:ini cond-d}.

In Section \ref{sec:theo-4f} we will show that the scheme \eqref{eq:fem-4f} is quasi-robust, i.e. the error does not degenerate for small values of the diffusion coefficients; furthermore, for this method the pre-asymptotic order of convergence in convection dominated cases is optimal (that is $O(h^{k+1/2})$) in the chosen discrete norm.

\begin{remark}[$H_{\textrm{curl}}$ conforming elements for the magnetic fluxes]
Notice that, when the domain $\Omega$ is non-convex, 
it is mandatory to consider an $H_{\textrm{curl}}$ conforming space (for instance the N\'ed\'elec element)  for the approximation of the magnetic field (see \cite[Chapter 3]{Gerbeau}).
This choice would be natural within the context of the four-field scheme since we consider a Lagrange multiplier to enforce divergence constraint of $\BB_h$,  and we incorporate a weak divergence constraint via an orthogonality on gradients.
However we prefer to keep the $H^1$-conforming Lagrange element for the magnetic field also for the four-field scheme for essentially two reasons.
The first one is the efficiency of the scheme. Although the non-convex case is surely of interest, a large part of the domains in real applications are still convex, 
and in such cases, the nodal FEM holds a substantially smaller number of degrees of freedom when compared to the $H_{\textrm{curl}}$ conforming one.
The second reason is the clarity of exposition. As it is, the three-field and the four-field methods are more strongly related, also from the coding viewpoint, which makes the presentation easier and more concise.
We finally note that developing the theoretical results for the $H_{curl}$ FEM conforming case, especially considering low regularity magnetic field solutions (not in $H^1$) would need additional developments.
\end{remark}

\begin{remark} 
 \label{rem:3Fto4F}
In the preliminary article \cite{MHD-linear} we tackled the stationary linear counterpart of the same model.
The present contribution borrows from \cite{MHD-linear} some technical results, mainly related to the orthogonality and local approximation properties of the discrete magnetic interpolant (essentially Lemmas \ref{lm:cip} and \ref{lm:int-i}). 
The primary challenges of the fully nonlinear case stem from the fact that, unlike in the linearized problem where certain vector fields are assumed to be known and smooth, these fields in the trilinear forms become unknown and discrete.
This explains why the theoretical error estimates of the three-field scheme in \cite{MHD-linear} exhibit an optimal $O(h^{k+1/2})$ pre-asymptotic error reduction rate in convection dominated cases ($\ns, \nm \lesssim h$) while here we can prove at most $O(h^{k})$. From our convergence bounds below, we identify the reason with an un-sufficiently strong imposition of the solenoidal condition for $\BB_h$. This justifies the introduction of the alternative, and more complex, approach with four fields of the next section (see also Remark \ref{rem:4F-teaser} below).
\end{remark}

\section{Theoretical analysis of the three-field scheme}
\label{sec:theo-3f}

We preliminary make the following assumption on the velocity solution $\uu$ of problem \eqref{eq:variazionale}.

\smallskip
\noindent
\textbf{(RA1-3f) Regularity assumption for the consistency:}

\noindent
Let $(\uu, p, \BB)$ be the solution  of Problem \eqref{eq:variazionale}. 
Then 
$\uu(\cdot, t) \in \ZZs$ (cf. \eqref{eq:stars}) for a.e. $t \in I$.

\smallskip
\noindent
Under the Assumption \textbf{(RA1-3f)}, the discrete forms in \eqref{eq:forme_d} and \eqref{eq:J} satisfy for a.e. $t \in I$ and for all 
$\cc \in \ZZs \oplus \ZZd$ and $\TT \in \WWd$ the following consistency property
\begin{gather}
\label{eq:forme_con}
\ash(\uu, \vv_h) = -(\bdiver (\beps (\uu)) ,\, \vv_h )\,,
\,\,\,
c_h(\cc; \uu, \vv_h) = c(\cc; \uu, \vv_h)\,, 
\,\,\,
J_h(\TT; \uu, \vv_h) = 0 \quad
\text{$\forall \vv_h \in \VVd$,}
\\
\label{eq:forme_con3f}
(\diver \BB, \diver \HH_h) = 0 
\quad 
\text{$\forall \HH_h \in \WWd$,}
\end{gather}
i.e. all the forms in \eqref{eq:fem-3f} are consistent.

\subsection{Stability analysis}
\label{sub:stab-3f}

Recalling the definition \eqref{eq:Z_h}, consider the form
\[
\astabt \colon 
(\ZZd \times \WWd) \times
(\ZZd \times \WWd) \times
(\ZZd \times \WWd) \to \R 
\]
defined by
\begin{equation}
\label{eq:astabt}
\begin{aligned}
\astabt&((\cc, \TT); (\uu_h, \BB_h), (\vv_h, \HH_h))  :=
\ns \ash(\uu_h, \vv_h) + \nm \am(\BB_h, \HH_h) + J_h(\TT; \uu_h, \vv_h)+
\\  
& 
+ c_h(\cc; \uu_h, \vv_h) 
-d(\TT; \BB_h, \vv_h) 
+d(\TT; \HH_h, \uu_h)
+ (\diver \BB_h, \diver \HH_h)
 \,.
\end{aligned}
\end{equation}
Then Problem \eqref{eq:fem-3f} can be formulated as follows: find
$\uu_h \in L^\infty(0, T; \ZZd)$, 
$\BB_h \in L^\infty(0, T; \WWd)$, 
such that for a.e. $t \in I$
\begin{equation}
\label{eq:stab fem-3f}
\begin{aligned}
\left( \partial_t \uu_h, \vv_h\right) &+ 
\left( \partial_t \BB_h, \HH_h\right) +  
\astabt((\uu_h, \BB_h), (\uu_h, \BB_h), (\vv_h, \HH_h))
=\\
& = (\ff, \vv_h)   + (\GG, \HH_h)  
\quad \quad \text{$\forall (\vv_h, \HH_h) \in \ZZd \times \WWd$,}
\end{aligned}
\end{equation}
coupled with initial conditions \eqref{eq:ini cond-d}.

For any $\cc \in\ZZd$, and  $\TT \in \WWd$ we define the following norms and semi-norms  on  $\VVs \oplus \VVd$
\begin{equation}
\label{eq:norme}
\begin{aligned}
\Vert\uu \Vert_{1,h}^2 &:= 
\Vert \beps_h(\uu) \Vert^2 + 
\bdma \sum_{f \in \Edges} h_f^{-1} \Vert \jump{\uu}_f \Vert_{f}^2
\\
\normaupwc{\uu}^2 &:= \sum_{f \in \EdgesI}  \Vert \vert \cc \cdot \nn_f \vert^{1/2} \jump{\uu}_f \Vert_{f}^2
\\
\normacipT{\uu}^2 &:=
\sum_{f \in \EdgesI} \max \{\Vert \TT\Vert_{\LL^\infty(\omega_f)}^2, 1\}
\left( 
\Vert \jump{\uu}_f \Vert_f^2 + 
h_f^2  \Vert \jump{\bnabla_h \uu}_f \Vert_f^2
\right)
\\
\normaV{\uu}^2 &:= 
\ns \Vert \uu\Vert_{1,h}^2 + \normaupwc{\uu}^2 + \normacipT{\uu}^2 \,. 
\end{aligned}
\end{equation}
We also define the following norm on $\WWc$
\begin{equation}
\label{eq:norma mag 3f}
\normamt{\BB}^2 := 
\nm \Vert \bnabla \BB \Vert^2  + \Vert \diver \, \BB \Vert^2\,.
\end{equation}

The following result is instrumental to prove the well-posedness of problem \eqref{eq:stab fem-3f}.

\begin{proposition}[Coercivity of $\astabt$]
\label{prp:coe}
Under the mesh assumption \cfan{(MA1)} 
let $\astabt$ be the form \eqref{eq:astabt}.
If the parameter $\bdma$ in \eqref{eq:forme_d} is sufficiently large there exists a real positive constant $\ccoe$ such that for any $\cc \in \ZZd$ and $\TT \in \WWd$
the following holds
\[
\astabt((\cc, \TT); (\vv_h, \HH_h), (\vv_h, \HH_h)) \geq 
\ccoe \left( \normaV{\vv_h}^2 + \normamt{\HH_h}^2 \right) 
\quad \text{$\forall \vv_h \in \ZZd$, $\forall \HH_h \in \WWd$.}
\]
\end{proposition}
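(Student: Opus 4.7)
The plan is to test the form in its second and third arguments against the same element, i.e.\ to evaluate
\[
\astabt((\cc,\TT);(\vv_h,\HH_h),(\vv_h,\HH_h)),
\]
and then bound each summand below in terms of the pieces of $\normaV{\vv_h}^2 + \normamt{\HH_h}^2$.

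The first (and crucial) observation is an algebraic cancellation in the magnetic coupling: the terms $-d(\TT;\BB_h,\vv_h) + d(\TT;\HH_h,\uu_h)$ present in \eqref{eq:astabt} exactly cancel when $(\uu_h,\BB_h) = (\vv_h,\HH_h)$. Thus we are reduced to estimating
\[
\ns\,\ash(\vv_h,\vv_h) + \nm\,\am(\HH_h,\HH_h) + (\diver\BB_h,\diver\BB_h) + J_h(\TT;\vv_h,\vv_h) + c_h(\cc;\vv_h,\vv_h).
\]

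Next I would handle the four surviving contributions separately. For the DG symmetric gradient form $\ash$, a standard argument based on the trace inequality of Lemma \ref{lm:trace} and an inverse estimate (Lemma \ref{lm:inverse}) shows that, provided $\bdma$ is chosen large enough, $\ash(\vv_h,\vv_h) \gtrsim \normaunoh{\vv_h}^2$. For the magnetic diffusive part, recalling the embedding \eqref{eq:well3} valid on the convex polyhedron $\Omega$, the sum $\nm\,\am(\HH_h,\HH_h) = \nm(\|\bcurl\HH_h\|^2 + \|\diver\HH_h\|^2)$ controls $\nm\|\bnabla\HH_h\|^2$, while the grad--div term adds a full $\|\diver\HH_h\|^2$; together they dominate $\normamt{\HH_h}^2$. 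Finally, $J_h(\TT;\vv_h,\vv_h)$ equals $\normacipT{\vv_h}^2$ by direct identification with the definitions \eqref{eq:J} and \eqref{eq:norme} (recall Remark \ref{rm:parametri}).

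The one step that deserves a little care is showing $c_h(\cc;\vv_h,\vv_h) \gtrsim \normaupwc{\vv_h}^2$, which is where the $H_{\rm div}$-conformity and $\cc\in\ZZd$ are essential. On each element, since $\diver\cc=0$,
\[
\int_E (\bnabla\vv_h)\cc\cdot\vv_h = \tfrac{1}{2}\int_E \cc\cdot\bnabla|\vv_h|^2 = \tfrac{1}{2}\int_{\partial E}(\cc\cdot\nn_E)|\vv_h|^2.
\]
Summing over $E\in\Omega_h$, using that $\cc\cdot\nn_f$ is single-valued across internal faces (BDM conformity) and vanishes on $\partial\Omega$, together with the algebraic identity $|\vv_h^{+}|^2 - |\vv_h^{-}|^2 = 2\,\media{\vv_h}\cdot(\vv_h^{+}-\vv_h^{-})$, yields
\[
((\bnabla_h\vv_h)\cc,\vv_h) = \sum_{f\in\EdgesI}\bigl((\cc\cdot\nn_f)\,\media{\vv_h}_f,\jump{\vv_h}_f\bigr)_f.
\]
Plugging this into the definition of $c_h$ in \eqref{eq:forme_d} the centered convective and the centered consistency-like terms cancel, leaving only the upwind penalty $\bdmc\,\normaupwc{\vv_h}^2$.

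Summing the four lower bounds produces $\ccoe(\normaV{\vv_h}^2 + \normamt{\HH_h}^2)$ with $\ccoe > 0$ independent of $\ns$, $\nm$, $h$, $\cc$ and $\TT$. The main obstacle in the whole argument is really the fourth step: the cancellation of the centered pieces of $c_h$ only works because $\cc$ lies in the discrete kernel $\ZZd$ and is $H_{\rm div}$-conforming; were either property missing, spurious non-sign-definite face terms would survive.
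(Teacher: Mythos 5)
Your proposal is correct and follows essentially the same route as the paper, which simply invokes standard DG-theory arguments (coercivity of the interior-penalty form $\ash$ for $\bdma$ large, exact positivity of the upwind convective form for divergence-free $H_{\rm div}$-conforming $\cc$, nonnegativity of $J_h$) together with the embedding \eqref{eq:well3} for the magnetic part; you merely spell out the details, including the correct cancellation of the coupling terms $-d(\TT;\cdot,\cdot)+d(\TT;\cdot,\cdot)$ and the identity $c_h(\cc;\vv_h,\vv_h)=\bdmc\normaupwc{\vv_h}^2$. The only (harmless) slip is writing $(\diver\BB_h,\diver\BB_h)$ instead of $(\diver\HH_h,\diver\HH_h)$ in the reduced expression after substituting $\BB_h=\HH_h$.
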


\begin{proof}
The proof easily follows combining standard DG-theory arguments (see  for instance \cite[Lemma 4.12, Lemma 2.18]{DiPietro}) and the embedding \eqref{eq:well3}. 
\end{proof}

\begin{proposition}[Well-posedness of \eqref{eq:stab fem-3f}]
\label{prp:well-3f}
Under the mesh assumption \cfan{(MA1)} 
if the parameter $\bdma$ in \eqref{eq:forme_d} is sufficiently large, Problem \eqref{eq:stab fem-3f} admits a unique solution $(\uu_h, \BB_h)$. 
Moreover the following bounds hold (cf. \eqref{eq:data})
\begin{align}
\label{eq:stab3f-1}
\Vert \uu_h \|_{L^\infty(0, T; \Ldue)}^2 + 
\Vert \BB_h \|_{L^\infty(0, T; \Ldue)}^2 
&\lesssim \data^2 \,,
\\
\label{eq:stab3f-2}
\Vert \uu_h(\cdot, T) \Vert^2 + \Vert \BB_h(\cdot, T) \Vert^2 +
\int_0^T \|\uu_h(\cdot, t)\|_{(\uu_h(\cdot, t), \BB_h(\cdot, t))}^2 \, {\rm d}t  +
\int_0^T \normamt{\BB_h(\cdot, t)}^2 \, {\rm d}t  
&\lesssim \data^2 \,.
\end{align}
\end{proposition}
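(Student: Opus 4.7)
The plan is to derive an energy identity by testing the reformulation \eqref{eq:stab fem-3f} with $(\vv_h, \HH_h) = (\uu_h, \BB_h)$, to invoke the coercivity of $\astabt$ from Proposition \ref{prp:coe}, and to extract from the resulting inequality the a priori bounds \eqref{eq:stab3f-1} and \eqref{eq:stab3f-2}. Existence and uniqueness will then follow from standard nonlinear ODE theory on the finite-dimensional space $\ZZd \times \WWd$.

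The first step is the key algebraic observation: inside $\astabt((\uu_h,\BB_h); (\uu_h,\BB_h), (\uu_h,\BB_h))$ the two magnetic coupling contributions $-d(\BB_h;\BB_h,\uu_h)$ and $+d(\BB_h;\BB_h,\uu_h)$ cancel exactly, while Proposition \ref{prp:coe} (applied with $\cc = \uu_h$ and $\TT = \BB_h$) bounds the remainder from below by $\ccoe \bigl( \|\uu_h\|_{(\uu_h, \BB_h)}^2 + \normamt{\BB_h}^2 \bigr)$. Combining with the time-derivative term and estimating the forcing by Cauchy--Schwarz yields the differential inequality
\begin{equation*}
\tfrac{1}{2}\tfrac{d}{dt}\bigl(\|\uu_h\|^2 + \|\BB_h\|^2\bigr) + \ccoe\bigl(\|\uu_h\|_{(\uu_h,\BB_h)}^2 + \normamt{\BB_h}^2\bigr) \leq \bigl(\|\ff\| + \|\GG\|\bigr)\bigl(\|\uu_h\|^2 + \|\BB_h\|^2\bigr)^{1/2}.
\end{equation*}
Integrating on $[0,t]$ and letting $t^\star \in [0,T]$ denote a point at which $t \mapsto \|\uu_h(t)\|^2 + \|\BB_h(t)\|^2$ attains its maximum, one extracts a quadratic inequality for $M := \bigl(\|\uu_h(t^\star)\|^2 + \|\BB_h(t^\star)\|^2\bigr)^{1/2}$ of the form $M^2 \leq C \bigl(\|\uu_{h,0}\|^2 + \|\BB_{h,0}\|^2\bigr) + 2 M \bigl(\|\ff\|_{L^1(0,T;\Ldue)} + \|\GG\|_{L^1(0,T;\Ldue)}\bigr)$. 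Solving the quadratic and invoking \eqref{eq:initial-bound} together with the definition \eqref{eq:data} of $\data$ produces \eqref{eq:stab3f-1}; re-inserting the bound on $M$ into the integrated identity written at $t = T$ delivers \eqref{eq:stab3f-2}. Finally, after fixing a basis of $\ZZd \times \WWd$, problem \eqref{eq:stab fem-3f} becomes a Cauchy problem whose drift is a polynomial (hence locally Lipschitz) vector field perturbed by an $L^2$-in-time datum; Picard--Lindel\"of provides a unique maximal solution, and the a priori bound above rules out blow-up, extending the solution to all of $[0,T]$.

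The main obstacle I anticipate is not the ODE existence step but the algebraic self-consistency of the energy identity: the cancellation of the magnetic coupling hinges on the specific skew-symmetric pairing encoded in \eqref{eq:astabt}, and the non-negativity of $c_h(\uu_h;\uu_h,\uu_h)$ (absorbed in Proposition \ref{prp:coe}) depends structurally on both the strong discrete solenoidality $\uu_h \in \ZZd$ guaranteed by the $H_{\mathrm{div}}$-conforming BDM choice and on a sufficiently large upwind parameter $\bdmc$, so that the interior face jumps dominate the skew-symmetric part of the trilinear form. Once these two structural ingredients are acknowledged, the remaining work is routine.
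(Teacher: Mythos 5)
Your proposal is correct and follows essentially the same route as the paper: test \eqref{eq:stab fem-3f} with $(\uu_h,\BB_h)$, use the coercivity of $\astabt$ from Proposition \ref{prp:coe} together with Cauchy--Schwarz on the forcing, and obtain existence/uniqueness from finite-dimensional ODE theory via the (locally) Lipschitz character of $\astabt$ plus the a priori bound. The only cosmetic difference is the treatment of the Gr\"onwall step with $L^1$-in-time data: the paper divides by $\bigl(\Vert\uu_h\Vert^2+\Vert\BB_h\Vert^2\bigr)^{1/2}$ and integrates $\partial_t$ of the square root, while you integrate first and solve a quadratic inequality at the maximizing time $t^\star$ --- the two are equivalent and yield the same bounds \eqref{eq:stab3f-1}--\eqref{eq:stab3f-2}.
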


\begin{proof}
The existence of a unique solution $(\uu_h, \BB_h)$ to the Cauchy problem \eqref{eq:stab fem-3f} can be derived using analogous arguments to that in  \cite{layton2008} (see also \cite{Hm1}) and follows by the Lipschitz continuity of $\astabt$. We now prove the stability bounds. 

\noindent 
$\bullet$ Bound \eqref{eq:stab3f-1}. 
Direct computations yield for a.e. $t \in I$
\begin{equation}
\label{eq:bound3f-1}
\begin{aligned}
& \left(\Vert \uu_h  \Vert^2 + 
\Vert \BB_h \Vert^2 \right)^{1/2} \, \partial_t \big[ \left(\Vert \uu_h \Vert^2 + 
\Vert \BB_h\Vert^2 \right)^{1/2} \big] 
\\
&\lesssim
\partial_t \Vert \uu_h  \Vert^2 + 
\partial_t \Vert \BB_h \Vert^2 +
\|\uu_h\|_{(\uu_h, \BB_h)}^2 +
\normamt{\BB_h}^2
\\
&\lesssim
\left(\partial_t \uu_h  , \uu_h \right) + 
\left(\partial_t \BB_h  , \BB_h \right) + 
\astabt((\uu_h, \BB_h); (\uu_h, \BB_h), (\uu_h, \BB_h))
& \quad & \text{(by Prp. \ref{prp:coe})}
\\
& = (\ff, \uu_h) + (\GG, \BB_h)
& \quad & \text{(by \eqref{eq:stab fem-3f})}
\\
& \leq 
\left( \|\ff\|^2  + \|\GG\|^2\right)^{1/2} 
\left(\Vert \uu_h  \Vert^2 + 
\Vert \BB_h \Vert^2 \right)^{1/2}
\\
& \leq 
\left( \|\ff\|  + \|\GG\|\right) 
\left(\Vert \uu_h  \Vert^2 + 
\Vert \BB_h \Vert^2 \right)^{1/2} \,.
\end{aligned}
\end{equation}
Therefore, for a.e. $t \in I$  recalling \eqref{eq:initial-bound}, we get
\[
\|\uu_h(\cdot, t)\|^2 +
\|\BB_h(\cdot, t)\|^2 \lesssim
\|\uu_{h,0}\|^2 +
\|\BB_{h,0}\|^2 + 
\left(\int_0^t \left(\|\ff(\cdot, s)\| + \|\GG(\cdot, s)\| \right) \, {\rm d}s \right)^2  
\lesssim \data^2 \,.
\]

\noindent
$\bullet$ Bound \eqref{eq:stab3f-2}.
Combining the second and the last row in \eqref{eq:bound3f-1} and \eqref{eq:stab3f-1} for a.e. $t \in I$ we obtain
\[
\partial_t \Vert \uu_h  \Vert^2 + 
\partial_t \Vert \BB_h \Vert^2 +
\|\uu_h\|_{(\uu_h, \BB_h)}^2 +
\normamt{\BB_h}^2 \lesssim 
(\|\ff\|  + \|\GG\|) \, 
\data \,.
\]
The proof follows by integrating the previous bound over $I$, the Young inequality and \eqref{eq:initial-bound}.
\end{proof}

\subsection{Error analysis}
\label{sub:err}

Let $(\uu_h, \BB_h)$ be the solution of Problem \eqref{eq:stab fem-3f}, then for a.e. $t \in I$, we introduce the following  shorthand notation  (for all sufficiently regular $\vv:\Omega\times [0,T] \rightarrow {\mathbb R}^3$):
\begin{equation}
\label{eq:normastab3}
\normastab{\vv}(t) := \Vert \vv(\cdot, t) \|_{(\uu_h(\cdot, t), \BB_h(\cdot, t))} \,.
\end{equation} 

Let $(\uu, \BB)$ and $(\uu_h, \BB_h)$ be the solutions of Problem \eqref{eq:variazionale} and Problem \eqref{eq:stab fem-3f}, respectively.
Then referring to Lemma \ref{lm:int-v} and Lemma \ref{lm:int-i}, let us define the following error functions
\begin{equation}
\label{eq:fquantities}
\eei := \uu - \uui\,, \qquad 
\eeh := \uu_h - \uui\,, \qquad 
\EEi := \BB - \BBi\,, \qquad 
\EEh := \BB_h - \BBi \,.
\end{equation}
Notice  that from Lemma \ref{lm:int-v} $\uui \in \ZZd$, thus $\eeh \in \ZZd$ for a.e. $t \in I$.

We now state the final regularity assumptions required for the theoretical analysis.

\smallskip
\noindent
\textbf{(RA2-3f) Regularity assumptions on the exact solution (error analysis):} 
\begin{itemize}
\item[(i)] $\uu \in L^2(0, T; \WW^{k+1}_{\infty}(\Omega_h)) \cap H^1(0, T; \HH^{k+1}(\Omega_h))$\,, 
$\BB \in H^1(0, T; \HH^{k+1}(\Omega_h))$\,,
\item[(ii)] $\uu, \BB \in L^\infty(0, T; \WW^1_\infty(\Omega_h))$\,.
\end{itemize}
In order to shorten some equations in the following
we set
\begin{equation}
\label{eq:gunoinf}
\gunoinf := 1 + \Vert \uu \Vert_{L^\infty(0, T; \WW^1_\infty(\Omega_h))} +
\Vert \BB \Vert_{L^\infty(0, T; \WW^1_\infty(\Omega_h))} \,.
\end{equation}
We also introduce the following useful quantities for the error analysis
\begin{align}
\label{eq:quantities}
\ls^2 &:= \max \left\{ 
\ns(1 + \bdma + \bdma^{-1})   \,,
h (\data^2 + 1) 
\right\} \,,
\\
\label{eq:quantities3f}
\lmt^2 &:=  (\nm + 1)   \,.
\end{align}
%
The maximum in the quantity $\ls^2$ above is associated to the usual comparison among diffusion and convection.

\begin{proposition}[Interpolation error estimate for the velocity field]
Let Assumption \cfan{(MA1)} hold.
Then, under the regularity assumption  \cfan{(RA2-3f)}, referring to \eqref{eq:fquantities} and \eqref{eq:quantities} for a.e. $t \in I$ the following holds:
\label{prp:interpolation}
\[
\normastab{\eei}^2 \lesssim  
\ls^2 \, \Vert \uu \Vert_{\WW^{k+1}_\infty(\Omega_h)}^2 \, h^{2k}  \,.
\]
\end{proposition}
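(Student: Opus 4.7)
My plan is to split the stabilization norm into its three defining contributions,
\[
\normastab{\eei}^2 \;=\; \ns\,\|\eei\|_{1,h}^2 \;+\; \normaupw{\eei}^2 \;+\; \normacip{\eei}^2,
\]
and bound each summand by $\ls^2 h^{2k}\|\uu\|^2_{\WW^{k+1}_\infty(\Omega_h)}$ using three tools: the BDM interpolation bounds of Lemma~\ref{lm:int-v}, the jump estimates \eqref{eq:utilejump2} of Remark~\ref{rm:jumpmediaeei}, and the a priori $L^2$-stability $\|\uu_h(\cdot,t)\|+\|\BB_h(\cdot,t)\|\lesssim\data$ from Proposition~\ref{prp:well-3f}.

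For the diffusive contribution I would combine \eqref{eq:int-v} applied with $m=1$ and $s=k$, giving $\|\beps_h(\eei)\|^2\lesssim h^{2k}|\uu|_{k+1,\Omega_h}^2$, with \eqref{eq:utilejump2} at $\alpha=-1$, $s=k$, giving $\sum_{f}h_f^{-1}\|\jump{\eei}_f\|_f^2\lesssim h^{2k}|\uu|_{k+1,\Omega_h}^2$. Adding and using $|\uu|_{k+1,\Omega_h}\lesssim\|\uu\|_{\WW^{k+1}_\infty(\Omega_h)}$ yields
\[
\ns\|\eei\|_{1,h}^2 \;\lesssim\; \ns(1+\bdma)\,h^{2k}\|\uu\|^2_{\WW^{k+1}_\infty(\Omega_h)},
\]
which is absorbed by the first term of the maximum defining $\ls^2$.

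The two convective seminorms are treated by a common mechanism. For the upwind one, I would bound $|\uu_h \cdot \nn_f| \leq \|\uu_h\|_{\LL^\infty(\omega_f)}$ face by face, invoke the inverse inequality of Lemma~\ref{lm:inverse} to trade the $L^\infty$ for an $L^2$-norm paying a factor $h^{-3/2}$, and then perform a weighted Cauchy--Schwarz in the face index. The two resulting factors are controlled respectively by the global bound $\|\uu_h\|\lesssim\data$ (after the inverse estimate absorbs the weights), and by combining the pointwise trace--interpolation bound $\|\jump{\eei}_f\|_f^2\lesssim h_f^{2k+4}\|\uu\|^2_{\WW^{k+1}_\infty(\omega_f)}$ with the summed estimate \eqref{eq:utilejump2}; this produces $\normaupw{\eei}^2\lesssim h^{2k+1}\data\,\|\uu\|^2_{\WW^{k+1}_\infty(\Omega_h)}$. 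For the CIP seminorm, the splitting $\max\{\|\BB_h\|^2_{\LL^\infty(\omega_f)},1\}\le\|\BB_h\|^2_{\LL^\infty(\omega_f)}+1$ separates two pieces: the ``$+1$'' one is handled directly by \eqref{eq:utilejump2} with $\alpha=0$ for the jump part and $\alpha=1$ for the $h_f^2$-weighted gradient jump part, and the other follows exactly the upwind argument with $\BB_h$ in place of $\uu_h$, yielding $\lesssim h^{2k+1}(\data^2+1)\|\uu\|^2$. Since $\data\le\tfrac12(\data^2+1)$, all three contributions combine into $\ls^2 h^{2k}\|\uu\|^2_{\WW^{k+1}_\infty(\Omega_h)}$.

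The main difficulty lies in the fact that the seminorms $\normaupw{\cdot}$ and $\normacip{\cdot}$ carry the discrete fields $\uu_h,\BB_h$ as weights, for which no a priori $L^\infty$ control is available -- only the global $L^2$-stability is at hand. One is therefore forced to invoke the $L^\infty$--$L^2$ inverse inequality, whose $h^{-3/2}$ loss per element must be precisely offset by the additional $h$-powers coming from the trace and interpolation estimates of $\eei$. It is exactly this delicate balance that generates the factor $h(\data^2+1)$ appearing in the definition \eqref{eq:quantities} of $\ls^2$.
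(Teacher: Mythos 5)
Your argument is correct and essentially reproduces the paper's own proof: the same three-way splitting of $\normastab{\eei}$, the diffusive part bounded via \eqref{eq:int-v} and \eqref{eq:utilejump2}, and the discrete convective weights $\uu_h,\BB_h$ removed through the $L^\infty$--$L^2$ inverse estimate of Lemma \ref{lm:inverse} combined with the stability bound \eqref{eq:stab3f-1}, the only cosmetic difference being that you use a Cauchy--Schwarz over faces together with a pointwise trace--interpolation bound where the paper applies a Young splitting of the weight into its square plus one (so it reaches $h(\data^2+1)$ directly rather than via $\data\le\tfrac12(\data^2+1)$). One bookkeeping slip: the $h_f^2$-weighted gradient-jump piece of the CIP seminorm is covered by \eqref{eq:utilejump2} with $\alpha=0$ (the gradient term there already carries the weight $h_f^{\alpha+2}$), not $\alpha=1$, which would produce the weight $h_f^3$ and would then require a quasi-uniformity argument the paper deliberately avoids.
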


\begin{proof}
We estimate each term in the definition of $\normastab{\eei}$ (cf. \eqref{eq:normastab3} and \eqref{eq:norme}).
Employing bounds \eqref{eq:int-v} and \eqref{eq:utilejump2} the term 
$\normaunoh{\eei}$ can be bounded as follows:
\begin{equation}
\label{eq:eis}
\ns \normaunoh{\eei}^2 \lesssim
\ns (1 + \bdma) h^{2k} \vert \uu \vert_{k+1, \Omega_h}^2\,.
\end{equation}
Concerning $\normaupw{\eei}$ we infer
\begin{equation}
\label{eq:eiupw}
\begin{aligned}
&\normaupw{\eei}^2
= 
\sum_{f \in \EdgesI}  (\vert \uu_h \cdot \nn_f \vert \, \jump{\eei}_f \,,  \jump{\eei}_f)_{f}
\\
& \lesssim
\sum_{f \in \EdgesI}  \Vert |\jump{\eei}_f | \, |\uu_h \cdot \nn_f| \Vert_{f}^2 +
\sum_{f \in \EdgesI}  \Vert \jump{\eei}_f \Vert_{f}^2
&  &\text{(by Young ineq.)}
\\
& \lesssim
\sum_{E \in \Omega_h}  h_E^{-1} (
\Vert \uu_h  \Vert_{\LL^\infty(E)}^2 +  1)
\Vert \eei  \Vert_E^2 
&  &\text{(by \eqref{eq:utile-jump} \& \eqref{eq:utilejump2})}
\\
& \lesssim
\sum_{E \in \Omega_h}  h_E^{2k+1} (
\Vert \uu_h \Vert_{\LL^\infty(E)}^2  + 1)  
\vert \uu \vert_{k+1,E}^2
&  &\text{(by \eqref{eq:int-v})}
\\
& \lesssim
h^{2k+1} \left(
{ \vert E\vert}\Vert \uu \Vert_{\WW^{k+1}_\infty(\Omega_h)}^2
{ h_E^{-3}} \Vert \uu_h \Vert^2  +
\vert \uu \vert_{k+1,\Omega_h}^2 \right) 
&  &\text{(H\"{o}lder ineq. \& by Lm. \ref{lm:inverse})}
\\
& \lesssim
h^{2k+1} \left(
\Vert \uu \Vert_{\WW^{k+1}_\infty(\Omega_h)}^2
\data^2 +
\vert \uu \vert_{k+1,\Omega_h}^2 \right) \, .
&  &\text{(by \eqref{eq:stab3f-1})}
\end{aligned}
\end{equation}
Using analogous computations for the term $\normacip{\eei}$ we get
\begin{equation}
\label{eq:eicip}
\begin{aligned}
&\normacip{\eei}^2 = 
 \sum_{f \in \EdgesI} 
\max \{\Vert \BB_h\Vert_{\LL^\infty(\omega_f)}^2 , 1 \}
\left( 
\Vert \jump{\eei}_f \Vert_f^2 + 
h_f^2  \Vert \jump{\bnabla_h \eei}_f \Vert_f^2
\right)
\\
& \begin{aligned}
& \lesssim
\sum_{E \in \Omega_h} h_E^{2k+1}
(\Vert \BB_h\Vert_{\LL^\infty(E)}^2+ 1) \,
\vert \uu \vert_{k+1,E}^2 
& \qquad &\text{(by \eqref{eq:utilejump2})}
\\
& \lesssim
h^{2k+1} 
\left( \vert E\vert \Vert \uu \Vert_{\WW^{k+1}_\infty(\Omega_h)}^2  h_E^{-3} \Vert \BB_h\Vert^2 
+ \vert \uu \vert_{k+1,\Omega_h}^2
\right)
& \qquad  &\text{(H\"{o}lder ineq. \& by Lm. \ref{lm:inverse})}
\\
& \lesssim
h^{2k+1} \left(
\Vert \uu \Vert_{\WW^{k+1}_\infty(\Omega_h)}^2 
\data^2 + \vert \uu \vert_{k+1,\Omega_h}^2 \right) \, .
& \qquad &\text{(by \eqref{eq:stab3f-1})}
\end{aligned}
\end{aligned}
\end{equation}
The thesis follows combining the three bounds above.
\end{proof}

\begin{proposition}[Interpolation error estimate for the magnetic field]
\label{prp:interpolationmt}
Let Assumption \cfan{(MA1)} hold.
Furthermore, if $k=1$ let also Assumption \cfan{(MA2)} hold. Then, under the regularity assumption  \cfan{(RA2-3f)}, referring to \eqref{eq:fquantities} and \eqref{eq:quantities3f} for a.e. $t \in I$ the following holds:
\[
 \normamt{\EEi}^2 \lesssim 
\lmt^2 \, \vert \BB \vert_{k+1, \Omega_h}^2 \, h^{2k}  \,.
\]
\end{proposition}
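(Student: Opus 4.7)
The plan is to bound separately the two contributions appearing in the definition of the norm in \eqref{eq:norma mag 3f}, namely the scaled broken gradient term $\nm \Vert \bnabla \EEi \Vert^2$ and the divergence term $\Vert \diver \EEi \Vert^2$, and then combine them.

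First, for the broken gradient term I would invoke directly the second estimate in \eqref{eq:err-int} of Lemma \ref{lm:int-i} with $s = k$, which (under \cfan{(MA1)}, plus \cfan{(MA2)} in the $k=1$ case, so that the interpolant $\PWWd$ is the one provided by that lemma) gives
\[
\Vert \bnabla (\BB - \BBi) \Vert \;\lesssim\; h^{k}\,\vert \BB \vert_{k+1,\Omega_h}\,,
\]
and therefore $\nm \Vert \bnabla \EEi \Vert^2 \lesssim \nm\, h^{2k} \vert \BB \vert_{k+1,\Omega_h}^2$.

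Second, for the divergence term I would simply use the elementwise pointwise bound $|\diver \EEi| \le |\bnabla \EEi|$ (valid on each $E \in \Omega_h$ since $\BBi$ is piecewise smooth and $\BB$ is in $\HH^{k+1}(\Omega_h)$ by regularity \cfan{(RA2-3f)}), hence
\[
\Vert \diver \EEi \Vert^2 \;\le\; \Vert \bnabla_h \EEi \Vert^2 \;\lesssim\; h^{2k}\vert \BB \vert_{k+1,\Omega_h}^2\,.
\]

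Summing the two contributions and recalling the definition $\lmt^2 = \nm + 1$ in \eqref{eq:quantities3f} yields the claimed estimate
\[
\normamt{\EEi}^2 = \nm \Vert \bnabla \EEi \Vert^2 + \Vert \diver \EEi \Vert^2
\;\lesssim\; (\nm + 1)\, h^{2k}\, \vert \BB \vert_{k+1, \Omega_h}^2
= \lmt^2\, h^{2k}\, \vert \BB \vert_{k+1, \Omega_h}^2\,.
\]
There is no real obstacle here: unlike the velocity case of Proposition \ref{prp:interpolation}, the norm $\normamt{\cdot}$ contains no jump terms and no convection-related stabilization, so no trace inequality, inverse estimate or coupling with the discrete fluid/magnetic unknowns is needed; the statement is a direct consequence of the approximation property of $\PWWd$.
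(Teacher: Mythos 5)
Your proposal is correct and follows essentially the same route as the paper: the paper also absorbs the divergence term into the gradient term (so that $\normamt{\EEi}^2 \lesssim (\nm+1)\Vert\bnabla\EEi\Vert^2$) and then applies the second estimate in \eqref{eq:err-int} with $s=k$. The only cosmetic remark is that the pointwise bound of $|\diver \EEi|$ by $|\bnabla \EEi|$ holds up to the dimensional constant $\sqrt{3}$, which is harmlessly absorbed in the $\lesssim$.
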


\begin{proof}
The proof is a direct consequence \eqref{eq:err-int}:
\[
\normamt{\EEi}^2  \lesssim
(\nm + 1) \Vert \bnabla \EEi\Vert^2 \lesssim
(\nm + 1) h^{2k} \vert \BB \vert_{k+1, \Omega_h}^2 \,.
\]
\end{proof}

We now prove the following error estimation.
\begin{proposition}[Discretization error]
\label{prp:error equation}
Let Assumption \cfan{(MA1)} hold. Furthermore, if $k=1$ let also Assumption \cfan{(MA2)} hold.
Let $\ccoe$ be the uniform constant in Proposition \ref{prp:coe}.
Then, under the consistency assumption \cfan{(RA1-3f)} and assuming that the parameter $\bdma$ (cf. \eqref{eq:forme_d}) is sufficiently large, referring to \eqref{eq:fquantities}, for a.e. $t \in I$ the following holds
\begin{equation}
\label{eq:error equation}
\frac{1}{2}\partial_t \Vert \eeh  \Vert^2 + 
\frac{1}{2}\partial_t \Vert \EEh \Vert^2 +
\ccoe \left(\normastab{\eeh}^2 +
\normamt{\EEh}^2 \right) \leq 
\sum_{i=1}^4 T_i + T_{5, 3\rm{f}} 
\end{equation}
where
\begin{equation}
\label{eq:Ti-3f}
\begin{aligned}
&T_1 := 
\left(\partial_t \eei  \,, \eeh \right) + 
\left(\partial_t \EEi  \,, \EEh \right) +
\ns \ash(\eei, \eeh) + \nm \am(\EEi, \EEh) +
J_h(\BB_h; \eei, \eeh),
\\
&\begin{aligned}
T_2 &:= c(\uu; \uu; \eeh) - c_h(\uu_h; \uui, \eeh) \,,
&\qquad 
T_3 &:= d(\BB_h; \BBi, \eeh) - d(\BB; \BB, \eeh) \,,
\\
T_4 &:= d(\BB; \EEh, \uu) - d(\BB_h; \EEh, \uui) \,,
&\qquad 
T_{5, 3\rm{f}} &:= (\bdiver \EEi, \bdiver \EEh) \,.
\end{aligned}
\end{aligned}
\end{equation}
\end{proposition}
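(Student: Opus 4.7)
The idea is to obtain an energy-type identity for the error pair $(\eeh, \EEh)$ by subtracting \eqref{eq:fem-3f} from a consistent rewriting of \eqref{eq:variazionale}, and then to reassemble the result so that Proposition \ref{prp:coe} applies directly. Testing both systems with $(\vv_h, \HH_h) = (\eeh, \EEh)$ is legitimate because $\uui \in \ZZd$ by Lemma \ref{lm:int-v}, hence $\eeh \in \ZZd$; this immediately kills both the discrete pressure term $b(\eeh, p_h)$ and the continuous pressure term $(\nabla p, \eeh)$ (since $\diver \eeh = 0$ and $\eeh\cdot\nn = 0$ on $\partial\Omega$). Using the consistency identities \eqref{eq:forme_con}--\eqref{eq:forme_con3f} (justified by \cfan{(RA1-3f)}), the continuous momentum and induction equations take the same form as the discrete ones with $(\uu, \BB)$ in place of $(\uu_h, \BB_h)$, once one adds the vanishing terms $J_h(\BB_h; \uu, \eeh) = 0$ and $(\diver \BB, \diver \EEh) = 0$.

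Having subtracted, I would use $\uu - \uu_h = \eei - \eeh$ and $\BB - \BB_h = \EEi - \EEh$, together with the expansions $\uu_h = \uui + \eeh$ and $\BB_h = \BBi + \EEh$, to move every contribution that is quadratic in $(\eeh, \EEh)$ to the left-hand side. Trilinearity of $c_h$ and $d$ yields $c_h(\uu;\uu,\eeh) - c_h(\uu_h;\uu_h,\eeh) = T_2 - c_h(\uu_h;\eeh,\eeh)$, $-d(\BB;\BB,\eeh) + d(\BB_h;\BB_h,\eeh) = T_3 + d(\BB_h;\EEh,\eeh)$ on the fluid side (from the convective and Lorentz-type couplings respectively), and $d(\BB;\EEh,\uu) - d(\BB_h;\EEh,\uu_h) = T_4 - d(\BB_h;\EEh,\eeh)$ on the magnetic side; the CIP contribution splits as $-J_h(\BB_h;\uu_h,\eeh) = J_h(\BB_h;\eei,\eeh) - J_h(\BB_h;\eeh,\eeh)$ thanks to $J_h(\BB_h;\uu,\eeh) = 0$; and the grad-div term gives $(\diver \BB_h, \diver \EEh) = \|\diver \EEh\|^2 - (\diver \EEi, \diver \EEh)$ using $\diver \BB = 0$, which produces the residual $T_{5,3\rm{f}}$.

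The final step is to sum the rearranged fluid and magnetic identities. The crucial observation is that the coupling contribution $-d(\BB_h;\EEh,\eeh)$ carried over to the fluid left-hand side and the corresponding $+d(\BB_h;\EEh,\eeh)$ on the magnetic left-hand side cancel exactly, mirroring the internal cancellation of the two $d$-terms inside $\astabt(\cdot;(\eeh,\EEh),(\eeh,\EEh))$. Therefore the combined left-hand side equals
\[
\tfrac{1}{2}\partial_t\|\eeh\|^2 + \tfrac{1}{2}\partial_t\|\EEh\|^2 + \astabt((\uu_h, \BB_h);(\eeh, \EEh),(\eeh, \EEh)),
\]
Proposition \ref{prp:coe} yields the lower bound $\ccoe(\normastab{\eeh}^2 + \normamt{\EEh}^2)$, and the remaining pieces on the right assemble cleanly into $T_1 + T_2 + T_3 + T_4 + T_{5,3\rm{f}}$, with $T_1$ collecting the interpolation residuals of the time derivatives, of the symmetric forms $\ns\ash(\eei,\cdot)$ and $\nm\am(\EEi,\cdot)$, and of $J_h(\BB_h;\eei,\eeh)$. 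There is no genuine analytic obstacle in this statement; the whole argument is careful algebraic bookkeeping, and the only point that deserves attention is tracking the signs of the two coupling $d$-terms so that they cancel (rather than double) upon summation. The truly delicate work---bounding each $T_i$ in a way that is uniform in the diffusion parameters and optimal in $h$---is postponed to the subsequent sections.
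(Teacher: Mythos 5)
Your proposal is correct and follows essentially the same route as the paper's proof: testing on the discrete kernel $\ZZd\times\WWd$ so that all pressure contributions drop, invoking the consistency relations \eqref{eq:forme_con}--\eqref{eq:forme_con3f}, splitting via $\uu-\uu_h=\eei-\eeh$, $\BB-\BB_h=\EEi-\EEh$ into exactly the terms $T_1,\dots,T_4,T_{5,3\rm f}$, and concluding with the coercivity of $\astabt$ from Proposition \ref{prp:coe}. The only (immaterial) difference is organizational: the paper starts from the coercivity bound and then substitutes the discrete equation \eqref{eq:stab fem-3f} and the consistent continuous equation, whereas you subtract the two tested systems first and apply coercivity last; the algebra, including the cancellation of the two coupling $d$-terms inside $\astabt$, is identical.
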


\begin{proof}
Direct calculations yield
\[
\begin{aligned}
&\frac{1}{2}\partial_t \Vert \eeh  \Vert^2 + 
\frac{1}{2}\partial_t \Vert \EEh \Vert^2 +
\ccoe \left(\normastab{\eeh}^2 +
\normamt{ \EEh}^2 \right) \leq 
\\
&\leq
\left(\partial_t \eeh  , \eeh \right) + 
\left(\partial_t \EEh  , \EEh \right) +
\astabt((\uu_h, \BB_h); (\eeh, \EEh), (\eeh, \EEh))
&  &\text{(Prp. \ref{prp:coe})}
\\
&=
(\ff, \eeh) + (\GG, \EEh) -
\left(\partial_t \uui  , \eeh \right) -
\left(\partial_t \BBi  , \EEh \right) +
\\
& \quad -
\astabt((\uu_h, \BB_h); (\uui, \BBi), (\eeh, \EEh))
&  &\text{(by \eqref{eq:stab fem-3f})}
\\
&=
\left(\partial_t \eei  , \eeh \right) +
\left(\partial_t \EEi  , \EEh \right) +
\ns \as(\uu, \eeh) + \nm \am(\BB, \EEh) + c(\uu; \uu, \eeh)+
\\
& \quad 
- d(\BB; \BB, \eeh) + d(\BB; \EEh, \uu)
-\astabt((\uu_h, \BB_h); (\uui, \BBi), (\eeh, \EEh)) \, .
&  &\text{(by \eqref{eq:variazionale})}
\end{aligned}
\]
The proof follows recalling definition \eqref{eq:astabt} and the consistency equations \eqref{eq:forme_con} and \eqref{eq:forme_con3f}.
\end{proof}

In the following $\theta$ (cf. Lemmas \ref{lm:t1}--\ref{lm:t5}) is a suitable uniformly bounded parameter that will be specified later.

\begin{lemma}[Estimate of $T_1$]
\label{lm:t1}
Under the assumptions of Proposition \ref{prp:error equation} and  the regularity assumption \cfan{(RA2-3f)}, the following holds
\begin{equation}
\label{eq:T1}
\begin{aligned}
T_1 &\leq 
\gunoinf \left( \Vert \eeh \Vert^2 + \Vert \EEh \Vert^2 \right) +
\theta \left( \ns\normaunoh{\eeh}^2 + \frac{1}{2}\normacip{\eeh}^2 + 
\nm\Vert \bnabla \EEh \Vert^2  \right) + \\
& \quad + \frac{C}{\theta} \, (\ls^2 \, \Vert \uu \Vert_{\WW^{k+1}_\infty(\Omega_h)}^2 + \nm \, |\BB|_{k+1, \Omega_h}^2) h^{2k} +
C (|\uu_t|_{k+1,\Omega_h}^2 + |\BB_t|_{k+1,\Omega_h}^2) h^{2k+2} \,.
\end{aligned}
\end{equation}
\end{lemma}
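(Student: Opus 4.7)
The plan is to split $T_1$ into its five constituent pieces and bound each one by Cauchy--Schwarz followed by Young's inequality, producing either a term that can be absorbed on the left-hand side of \eqref{eq:error equation} (picking up a factor of $\theta$), or a purely interpolation-type residual which is then estimated via Lemmas \ref{lm:int-v}, \ref{lm:int-i} and the bounds already recorded in Propositions \ref{prp:interpolation} and \ref{prp:interpolationmt}. The structure mirrors the standard argument for consistency/interpolation residuals in DG-stabilized MHD schemes, with the only bookkeeping issue being to match constants against the definitions of $\ls^2$ and $\gunoinf$.

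First I would handle the two time-derivative terms $(\partial_t \eei,\eeh)$ and $(\partial_t \EEi,\EEh)$ with a plain $L^2$ Cauchy--Schwarz plus Young: writing $(\partial_t \eei,\eeh)\le \tfrac{1}{2}\|\partial_t \eei\|^2+\tfrac{1}{2}\|\eeh\|^2$ and analogously for the magnetic part. The $\|\eeh\|^2$ and $\|\EEh\|^2$ contributions fold into $\gunoinf(\|\eeh\|^2+\|\EEh\|^2)$ via the bound $\gunoinf\ge 1$. For the interpolation residuals, Lemma \ref{lm:int-v}(iii) with $m=0$, $s=k$ applied to $\partial_t\uu$ and the analogous consequence of Lemma \ref{lm:int-i} (commuting $\PVVd$ and $\PWWd$ with $\partial_t$) give $\|\partial_t \eei\|^2\lesssim h^{2k+2}|\uu_t|_{k+1,\Omega_h}^2$ and similarly for $\BB_t$, producing precisely the last summand of \eqref{eq:T1}.

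Next I would treat the diffusive forms. For $\ns\,\ash(\eei,\eeh)$, a standard DG continuity argument based on face trace inequalities (Lemma \ref{lm:trace}) gives $|\ash(\eei,\eeh)|\lesssim (1+\bdma^{-1})\normaunoh{\eei}\normaunoh{\eeh}$; Young with weight $\theta$ and the observation $\ns(1+\bdma+\bdma^{-1})\le \ls^2$ from \eqref{eq:quantities}, combined with the $H^1$-interpolation estimate \eqref{eq:int-v} plus \eqref{eq:utilejump2}, yield $\theta\,\ns\normaunoh{\eeh}^2$ absorbable on the LHS and a residual bounded by $(C/\theta)\,\ls^2\,\Vert \uu\Vert_{\WW^{k+1}_\infty(\Omega_h)}^2\,h^{2k}$. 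The term $\nm\,\am(\EEi,\EEh)$ is handled in the same spirit: by $|\am(\EEi,\EEh)|\lesssim \|\bnabla \EEi\|\,\|\bnabla \EEh\|$ (both curl and divergence are controlled by the full broken gradient) and \eqref{eq:err-int}, Young delivers $\theta\,\nm\,\|\bnabla \EEh\|^2$ plus $(C/\theta)\,\nm\, h^{2k}\,|\BB|_{k+1,\Omega_h}^2$.

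Finally, for the stabilization term $J_h(\BB_h;\eei,\eeh)$ the definition \eqref{eq:J} is bilinear symmetric in its last two arguments and positive semidefinite in them, so Cauchy--Schwarz in the face sums gives $|J_h(\BB_h;\eei,\eeh)|\le \normacip{\eei}\,\normacip{\eeh}$, and Young with weight $\theta/2$ produces $\tfrac{\theta}{2}\normacip{\eeh}^2$ plus an interpolation residual $(C/\theta)\normacip{\eei}^2$, which is exactly one of the three quantities controlled in Proposition \ref{prp:interpolation} and therefore bounded by $(C/\theta)\,\ls^2\,\Vert \uu\Vert_{\WW^{k+1}_\infty(\Omega_h)}^2\,h^{2k}$. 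There is no genuine obstacle here; the only subtlety is making sure that the $\BB_h$-dependent weight appearing in $\normacip{\eei}$ --- already treated in Proposition \ref{prp:interpolation} via the $L^\infty$ estimate from \eqref{eq:stab3f-1} --- is precisely what motivates the second branch $h(\data^2+1)$ of the maximum defining $\ls^2$ in \eqref{eq:quantities}, so that the final bound has the clean form advertised in \eqref{eq:T1}.
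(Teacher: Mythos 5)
Your proposal is correct and follows essentially the same route as the paper: split $T_1$ into its five pieces, use Cauchy--Schwarz/Young on each, absorb the $\theta$-weighted discrete-error terms, and control the residuals with the interpolation estimates \eqref{eq:int-v}, \eqref{eq:err-int} and the bound \eqref{eq:eicip} (the paper merely outsources the $\ash$ computation to the linearized-case reference instead of redoing the DG/trace argument). Only a cosmetic caveat: your intermediate continuity claim $|\ash(\eei,\eeh)|\lesssim(1+\bdma^{-1})\normaunoh{\eei}\,\normaunoh{\eeh}$ is not literally true for the non-polynomial $\eei$ (the face averages of $\beps_h(\eei)$ require the extra $h_E\Vert\bnabla\beps_h(\eei)\Vert_E$ term from Lemma \ref{lm:trace}), but since that term is also $O(h^k|\uu|_{k+1,\Omega_h})$ by \eqref{eq:int-v}, the final bound you state is exactly the paper's.
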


\begin{proof}
We estimate separately each term in $T_1$.
Combining the Young inequality with \eqref{eq:int-v} and \eqref{eq:err-int} we infer 
\[
\left(\partial_t \eei  \,, \eeh \right) + 
\left(\partial_t \EEi  \,, \EEh \right)
\leq 
\Vert \eeh \Vert^2 + \Vert \EEh \Vert^2 +
C \, h^{2k+2} (|\uu_t|_{k+1,\Omega_h}^2 + |\BB_t|_{k+1,\Omega_h}^2)\,.
\]
Using the same computations in the proof of Proposition 5.8 in \cite{MHD-linear} (cf. term $\alpha_1$) we obtain 
\[
\ns \ash(\eei, \eeh)
\leq \theta \ns \, \normaunoh{\eeh}^2 + \frac{C}{\theta} \, \ns (1 + \bdma + \bdma^{-1}) h^{2k} |\uu|_{k+1, \Omega_h}^2 \,.
\]
The third term in $T_1$ can be bounded employing again 
the Young inequality  and \eqref{eq:err-int}
\[
\nm \am(\EEi, \EEh)
\leq \theta  \nm \, \Vert \bnabla \EEh\Vert^2 + \frac{C}{\theta} \, \nm h^{2k} |\BB|_{k+1, \Omega_h}^2 \,.
\]
Finally, recalling the definition of norm $\normacip{\cdot}$, employing Cauchy-Schwarz inequality, the Young inequality, and bound \eqref{eq:eicip}, we have
\[
\begin{aligned}
J_h(\BB_h; \eei, \eeh) &\leq \normacip{\eei} \, \normacip{\eeh}
\\
&\leq \frac{\theta}{2} \normacip{\eeh}^2 + \frac{C}{\theta} \, h^{2k+1} 
(\Vert \uu \Vert_{\WW^{k+1}_\infty(\Omega_h)}^2 \data^2 + \vert \uu \vert_{\HH^{k+1}(\Omega_h)}^2)   \,.
\end{aligned}
\]
\end{proof}

\begin{lemma}[Estimate of $T_2$]
\label{lm:t2}
Under the assumptions of Proposition \ref{prp:error equation} and  the regularity assumption \cfan{(RA2-3f)}, the following holds
\begin{equation}
\label{eq:T2}
T_2 \leq 
C  \gunoinf \, \Vert \eeh \Vert^2 +\theta  \normaupw{\eeh}^2  + \frac{C}{\theta} \ls^2 \,\Vert \uu \Vert_{\WW^{k+1}_\infty(\Omega_h)}^2  \, h^{2k} +  C  \gunoinf \vert \uu \vert_{k+1,\Omega_h}^2  \, h^{2k+2} \,.
\end{equation}
\end{lemma}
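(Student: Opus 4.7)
I follow the standard strategy for upwinded convection analysis, splitting $T_2$ into a piece that measures the discrete-to-continuous velocity gap and a piece involving only the interpolation error $\eei$. Because $\uu$ is continuous, all face-jump contributions of $c_h(\uu_h; \uu, \eeh)$ vanish and this term reduces to $((\bnabla\uu)\uu_h, \eeh) = c(\uu_h; \uu, \eeh)$. Adding and subtracting $c(\uu_h; \uu, \eeh)$ yields
\begin{equation*}
T_2 = c(\uu - \uu_h; \uu, \eeh) + c_h(\uu_h; \eei, \eeh) = ((\bnabla\uu)(\eei - \eeh), \eeh) + c_h(\uu_h; \eei, \eeh).
\end{equation*}
The first summand is bounded by Cauchy--Schwarz and Young's inequality, using $\Vert\bnabla\uu\Vert_{\LL^\infty}\leq \gunoinf$ together with $\Vert\eei\Vert \lesssim h^{k+1}\vert\uu\vert_{k+1,\Omega_h}$ from Lemma \ref{lm:int-v}, producing contributions of the form $C\gunoinf\Vert\eeh\Vert^2 + C\gunoinf\vert\uu\vert_{k+1,\Omega_h}^2 h^{2k+2}$.

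\textbf{Volume part of $c_h(\uu_h; \eei, \eeh)$.} Exploiting that $\uu_h \in \ZZd$ gives $\diver(\uu_h)|_E = 0$ elementwise and $\uu_h \cdot \nn_f$ single-valued across each internal face, elementwise integration by parts combined with the product identity $\jump{\eei\cdot\eeh}=\jump{\eei}\cdot\media{\eeh} + \media{\eei}\cdot\jump{\eeh}$ yields
\begin{equation*}
c_h(\uu_h; \eei, \eeh) = -(\eei,(\bnabla_h \eeh)\uu_h) + \sum_{f \in \EdgesI}(\uu_h\cdot\nn_f,\,\media{\eei}\cdot\jump{\eeh})_f + \bdmc\sum_{f \in \EdgesI}(|\uu_h\cdot\nn_f|\jump{\eei},\jump{\eeh})_f,
\end{equation*}
where the $\jump{\eei}\cdot\media{\eeh}$ contribution from the IBP cancels exactly with the corresponding piece in the definition of $c_h$. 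The volume error is then controlled by the orthogonality of $\PVVd$ (Lemma \ref{lm:int-v} (ii)): splitting $\uu_h = \uu + (\eeh - \eei)$ and $\uu = \Pzerok{0}\uu + (\uu - \Pzerok{0}\uu)$, the leading piece $(\eei,(\bnabla_h\eeh)\Pzerok{0}\uu)$ vanishes because $(\bnabla_h\eeh)\Pzerok{0}\uu \in [\Pk_{k-1}(\Omega_h)]^3$. The remaining contributions are of type $(\eei,(\bnabla_h\eeh)\mathbf{w})$ with $\mathbf{w}\in\{\uu - \Pzerok{0}\uu, \eei, \eeh\}$ and are bounded by Bramble--Hilbert, local inverse inequalities, the $L^\infty$ estimate \eqref{eq:int-v-inf}, and Remark \ref{rm:support}, all absorbed into $C\gunoinf\Vert\eeh\Vert^2 + C\gunoinf\vert\uu\vert_{k+1,\Omega_h}^2 h^{2k+2}$.

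\textbf{Face terms and main obstacle.} The two face contributions are treated jointly by Cauchy--Schwarz in the upwind norm, so that
\begin{equation*}
\Bigl|\sum_f(\uu_h\cdot\nn_f,\media{\eei}\cdot\jump{\eeh})_f + \bdmc\sum_f(|\uu_h\cdot\nn_f|\jump{\eei},\jump{\eeh})_f\Bigr| \leq P^{1/2}\normaupw{\eeh},
\end{equation*}
with $P := \sum_f \int_f |\uu_h\cdot\nn_f|(|\media{\eei}|^2 + |\jump{\eei}|^2)\, ds$. Young with parameter $\theta$ absorbs $\theta\normaupw{\eeh}^2$ into the left-hand side of \eqref{eq:error equation}, leaving $\frac{C}{\theta}P$ to estimate. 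Splitting $\uu_h = \uui + \eeh$, the $\uui$ part is bounded via the $L^\infty$ stability of the BDM interpolant, $\Vert\uui\Vert_{\LL^\infty}\lesssim \gunoinf$ (from \eqref{eq:int-v-inf}), combined with Remark \ref{rm:jumpmediaeei}, producing a bound $\lesssim \gunoinf h^{2k+1}\vert\uu\vert_{k+1,\Omega_h}^2$, which fits the target since $\ls^2 h^{2k} \geq h^{2k+1}(\data^2+1)$. The $\eeh$ part is the main obstacle: since $\eeh$ is only $L^2$-controlled, one must combine the $L^\infty$ bound $\Vert\media{\eei}\Vert_{L^\infty(f)} \lesssim h\Vert\uu\Vert_{W^1_\infty(\omega_f)}$ from \eqref{eq:int-v-inf} with the stability estimate $\Vert\uu_h\Vert \lesssim \data$ from Proposition \ref{prp:well-3f} and with standard face trace-inverse inequalities; the extra $h$ factor provided by the $L^\infty$ estimate on $\eei$ is precisely what reproduces the $h(\data^2+1)$ factor in $\ls^2$, making this the delicate power-counting step of the argument. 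All remaining manipulations are a routine interplay of Cauchy--Schwarz, Young, and the interpolation bounds of Section \ref{sub:spaces}.
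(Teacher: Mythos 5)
Your decomposition and the treatment of the volume terms are sound and essentially coincide with the paper's argument: the identity $T_2=((\bnabla\uu)(\eei-\eeh),\eeh)+c_h(\uu_h;\eei,\eeh)$, the elementwise integration by parts with the exact cancellation of the $\jump{\eei}_f\cdot\media{\eeh}_f$ contributions, and the use of the orthogonality of $\PVVd$ against $(\bnabla_h\eeh)\Pzerok{0}\uu$ together with Remark \ref{rm:support} are precisely the paper's steps (the paper groups the resulting terms as $T_{2,1},T_{2,2},T_{2,3}$, but the content is the same).

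The gap is in the step you yourself flag as delicate, namely the bound of $P=\sum_{f\in\EdgesI}\int_f|\uu_h\cdot\nn_f|\bigl(|\media{\eei}_f|^2+|\jump{\eei}_f|^2\bigr)$. The tools you list — the first-order bound $\Vert\eei\Vert_{\LL^\infty(E)}\lesssim h_E\Vert\uu\Vert_{\WW^1_\infty(E)}$ from \eqref{eq:int-v-inf}, the linear stability $\Vert\uu_h\Vert\lesssim\data$, and trace/inverse inequalities — cannot produce the required bound of order $h^{2k+1}(\data^2+1)\Vert\uu\Vert^2_{\WW^{k+1}_\infty(\Omega_h)}$, which is what is needed to absorb this contribution into $\frac{C}{\theta}\ls^2\Vert\uu\Vert^2_{\WW^{k+1}_\infty(\Omega_h)}h^{2k}$. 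Any such combination gains at most one power of $h$ from \eqref{eq:int-v-inf}, one face estimate $\Vert\jumpmedia{\eei}_f\Vert_f\lesssim h^{k+1/2}\vert\uu\vert_{k+1,\omega_f}$ and one power of $\data$, hence at best a term of order $h^{k+1}\data$, short of the target by a factor $h^{k}$ (already fatal for $k=1$); moreover the $\data^2$ in $\ls^2$ cannot arise from a single use of $\Vert\uu_h\Vert\lesssim\data$, since the weight $|\uu_h\cdot\nn_f|$ is linear in $\uu_h$. The paper's mechanism (the computation \eqref{eq:eiupw}, reused for $T_{2,3}$) is different: first Young's inequality splits $|\uu_h\cdot\nn_f|\,|\jumpmedia{\eei}_f|^2\le\tfrac12|\uu_h\cdot\nn_f|^2|\jumpmedia{\eei}_f|^2+\tfrac12|\jumpmedia{\eei}_f|^2$, then the inverse estimate $\Vert\uu_h\Vert^2_{\LL^\infty(E)}\lesssim h_E^{-3}\Vert\uu_h\Vert^2_E$ is combined with the crucial exchange $\vert\uu\vert^2_{k+1,E}\le|E|\,\Vert\uu\Vert^2_{\WW^{k+1}_\infty(E)}$, so that $|E|\,h_E^{-3}\lesssim1$ and only afterwards $\Vert\uu_h\Vert^2\lesssim\data^2$ yields $h^{2k+1}\data^2\Vert\uu\Vert^2_{\WW^{k+1}_\infty(\Omega_h)}$. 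This is exactly where the $\WW^{k+1}_\infty$ regularity of $\uu$ (visible on the right-hand side of \eqref{eq:T2} and guaranteed by \cfan{(RA2-3f)}) enters, whereas your sketch invokes only $\Vert\uu\Vert_{\WW^1_\infty(\omega_f)}$ at this point; the splitting $\uu_h=\uui+\eeh$ you introduce is also unnecessary, since the argument applies to $\uu_h$ directly. Without this Young/inverse/$\WW^{k+1}_\infty$ device the power counting of your face-term estimate does not close.
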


\begin{proof}
Recalling the definition of $c_h(\cdot; \cdot, \cdot)$, direct computations yield
\[
\begin{aligned}
T_2 & = c(\uu; \uu; \eeh) - c_h(\uu_h; \uui, \eeh)
\\ 
& = c(\uu; \uu; \eeh) - c_h(\uu_h; \uu, \eeh) + c_h(\uu_h; \eei, \eeh)
\\
& = 
(( \bnabla \uu ) \, (\uu - \uu_h), \, \eeh ) +
(( \bnabla_h \eei ) \, \uu_h, \, \eeh ) + 
\\
& \quad  
-\sum_{f \in \EdgesI} ( (\uu_h\cdot \nn_f) \jump{\eei}_f ,\, \media{\eeh}_f)_f +
\sum_{f \in \EdgesI} (\vert \uu_h \cdot \nn_f \vert \jump{\eei}_f, \, \jump{\eeh}_f )_f \,.
\end{aligned}
\]
Integrating by parts and recalling that $\diver \uu_h= 0$ and
$\jump{\boldsymbol{a} \cdot \boldsymbol{b}}_f = 
\jump{\boldsymbol{a}}_f \cdot \media{\boldsymbol{b}}_f 
+ 
\media{\boldsymbol{a}}_f \cdot \jump{\boldsymbol{b}}_f$
for any $f \in \EdgesI$, 
we obtain
\begin{equation}
\label{eq:T20}
\begin{aligned}
T_2
& = 
(( \bnabla \uu ) \, (\eei - \eeh), \, \eeh )  
 - (( \bnabla_h \eeh ) \, \uu_h, \, \eei) +
\sum_{f \in \EdgesI} ( (\uu_h\cdot \nn_f) , \, \jump{\eeh \cdot \eei}_f)_f + 
\\
& \quad  
-\sum_{f \in \EdgesI} ( (\uu_h\cdot \nn_f) \jump{\eei}_f ,\, \media{\eeh}_f)_f +
\sum_{f \in \EdgesI} (\vert \uu_h \cdot \nn_f \vert \jump{\eei}_f, \, \jump{\eeh}_f )_f
\\
& = 
\biggl( (( \bnabla \uu ) \, (\eei - \eeh), \, \eeh )  
+ (( \bnabla_h \eeh ) \, (\eei - \eeh), \, \eei) \biggr)
- (( \bnabla_h \eeh ) \, \uu, \, \eei) +
\\
& \quad  
+ \sum_{f \in \EdgesI} \left( ( (\uu_h\cdot \nn_f) \jump{\eeh}_f ,\, \media{\eei}_f)_f +
( |\uu_h\cdot \nn_f| \jump{\eei}_f ,\,  \jump{\eeh}_f)_f 
\right)
\\
&=: 
T_{2,1} + T_{2,2} + T_{2,3} \,.  
\end{aligned}
\end{equation}
We now estimate each term in the sum above.
The term $T_{2,1}$ can be bounded as follows 
\[
\begin{aligned}
T_{2,1} &\leq 
\left(\Vert \eei \Vert + \Vert \eeh \Vert \right) 
\left( \Vert \uu \Vert_{\WW^1_\infty(\Omega_h)} \, 
 \Vert  \eeh \Vert  +  
 \Vert |\bnabla_h \eeh| \, \eei \Vert
 \right)
 & \quad &\text{(Cau.-Sch. ineq.)}
 \\
&\leq
C \left(\Vert \eei \Vert + \Vert \eeh \Vert \right) 
 \Vert \uu \Vert_{\WW^1_\infty(\Omega_h)} \, 
 \Vert \eeh \Vert 
 & \quad &\text{(by \eqref{eq:support})}
\\
& \leq
C \Vert \uu \Vert_{\WW^1_\infty(\Omega_h)} \, \Vert \eeh \Vert^2 +
C h^{2k+2}\Vert \uu \Vert_{\WW^1_\infty(\Omega_h)} \, |\uu|_{k+1, \Omega_h}^2 \, .
& \quad &\text{(Young ineq. \& \eqref{eq:int-v})} 
\end{aligned} 
\]
Using the orthogonality \eqref{eq:orth-v}, for the term $T_{2,2}$ we infer
\[
\begin{aligned}
T_{2,2} & = (( \bnabla_h \eeh ) \, (\Pi_0 \uu - \uu), \, \eei)
\leq  
\Vert (\bnabla \eeh )\, 
 (\Pi_0 \uu - \uu) \Vert \,
\Vert \eei \Vert
& \quad &\text{(Cau.-Sch. ineq.)}
\\
& \leq C
\Vert \eeh \Vert \, 
\Vert \uu \Vert_{\WW^1_\infty(\Omega_h)} \,
\Vert \eei \Vert
& \quad &\text{(by \eqref{eq:support})}
\\
& \leq
C \Vert \uu \Vert_{\WW^1_\infty(\Omega_h)} \, \Vert \eeh \Vert^2 +
C h^{2k+2}\Vert \uu \Vert_{\WW^1_\infty(\Omega_h)} \, |\uu|_{k+1, \Omega_h}^2 \, .  
& \quad &\text{(Young ineq. \& \eqref{eq:int-v})}
\end{aligned}
\]
Finally, employing the Young inequality and analogous computations to those in \eqref{eq:eiupw}, the term $T_{2,3}$ is bounded as follows
\[
\begin{aligned}
T_{2,3} &\leq 
\theta \normaupw{\eeh}^2 + 
\frac{C}{\theta} (\Vert |\uu_h \cdot \nn_f| \media{\eei}\Vert_f^2 + \normaupw{\eei}^2)
\\
&\leq 
\theta  \normaupw{\eeh}^2 + \frac{C}{\theta} \, 
h^{2k+1} \left(
\Vert \uu \Vert_{\WW^{k+1}_\infty(\Omega_h)}^2
\data^2 +
\vert \uu \vert_{k+1,\Omega_h}^2 \right)  \,.
\end{aligned}
\]
The proof follows combining in \eqref{eq:T20} the three bounds above.
\end{proof}

\begin{lemma}[Estimate of $T_3$]
\label{lm:t3}
Under the assumptions of Proposition \ref{prp:error equation} and  the regularity assumption \cfan{(RA2-3f)}, the following holds
\begin{equation}
\label{eq:T3}
\begin{aligned}
T_3 &\leq 
C \left(\gunoinf  +  \frac{\theta}{4} h\right)  
( \Vert \EEh \Vert^2 + \Vert \eeh \Vert^2) +
\frac{\theta}{2}  \normacip{\eeh}^2 + 
\\
& \quad +
\frac{C}{\theta}   \gunoinf^2\vert \BB \vert_{k+1,\Omega_h}^2  h^{2k+1} + 
C  \gunoinf \vert \BB \vert_{k+1,\Omega_h}^2  h^{2k+2}  \,.
\end{aligned}
\end{equation}
\end{lemma}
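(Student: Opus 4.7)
The plan is to split $T_3$ by adding and subtracting terms that pair the exact and discrete quantities in each slot of $d(\cdot;\cdot,\cdot)$: using $\BB_h-\BB = \EEh - \EEi$ and the linearity of $d$, one rewrites
\[
T_3 = d(\BB_h-\BB;\BBi,\eeh) - d(\BB;\EEi,\eeh) = d(\EEh;\BBi,\eeh) - d(\EEi;\BBi,\eeh) - d(\BB;\EEi,\eeh).
\]
This produces three manageable contributions, each having at least one slot filled by an interpolation error, so approximation estimates can be applied directly.

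For the first two contributions I would estimate
\[
|d(\EEh;\BBi,\eeh)| + |d(\EEi;\BBi,\eeh)| \leq \Vert \bcurl \BBi\Vert_{\LL^\infty(\Omega_h)}\bigl(\Vert\EEh\Vert + \Vert\EEi\Vert\bigr)\Vert\eeh\Vert,
\]
using $\WW^1_\infty$-stability of the interpolant $\PWWd$ (a standard Lagrangian-type property; it can also be recovered from Lemma \ref{lm:int-i} via the local inverse inequality of Lemma \ref{lm:inverse}) to bound $\Vert \bcurl\BBi\Vert_{\LL^\infty(\Omega_h)} \lesssim \Vert\BB\Vert_{\WW^1_\infty(\Omega_h)} \leq \gunoinf$. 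A Cauchy--Schwarz plus Young inequality, combined with the interpolation estimate $\Vert\EEi\Vert \lesssim h^{k+1}|\BB|_{k+1,\Omega_h}$ from \eqref{eq:err-int}, then produces $C\gunoinf(\Vert\EEh\Vert^2 + \Vert\eeh\Vert^2)$ and $C\gunoinf\, h^{2k+2}|\BB|_{k+1,\Omega_h}^2$. For the third contribution I use instead $\Vert\BB\Vert_{\LL^\infty}\leq \gunoinf$ together with $\Vert\bcurl\EEi\Vert \lesssim h^{k}|\BB|_{k+1,\Omega_h}$ (again from Lemma \ref{lm:int-i}), obtaining $|d(\BB;\EEi,\eeh)| \lesssim \gunoinf\, h^{k}|\BB|_{k+1,\Omega_h}\Vert\eeh\Vert$. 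The crucial move here is to weight Young's inequality by $\theta h$ rather than by a pure constant: this redistributes the estimate into exactly the characteristic $\tfrac{C}{\theta}\gunoinf^2 h^{2k+1}|\BB|_{k+1,\Omega_h}^2$ and $\tfrac{\theta h}{4}\Vert\eeh\Vert^2$ pieces that appear in the target bound and produces the odd $h^{2k+1}$ exponent. Summing the three contributions gives the desired inequality.

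The main obstacle I anticipate is the $\WW^1_\infty$ (and $\LL^\infty$) control on $\BBi$, since it is not recorded explicitly in Lemma \ref{lm:int-i}. If one wishes to bypass invoking the $\WW^1_\infty$-stability of $\PWWd$, an alternative is to integrate by parts elementwise in $d(\EEh;\BBi,\eeh)$, moving $\bcurl$ onto the smoother argument $\EEh\times\eeh$. The resulting interior face terms contain jumps of $\eeh$ which can be absorbed into the CIP stabilization norm $\normacip{\eeh}$, which precisely explains the presence of the $\tfrac{\theta}{2}\normacip{\eeh}^2$ slack term on the right-hand side of \eqref{eq:T3}. Either route leads to the stated estimate after reorganising the bounds into the form $C(\gunoinf + \tfrac{\theta}{4}h)(\Vert\EEh\Vert^2+\Vert\eeh\Vert^2) + \tfrac{\theta}{2}\normacip{\eeh}^2 + \tfrac{C}{\theta}\gunoinf^2|\BB|_{k+1,\Omega_h}^2 h^{2k+1} + C\gunoinf|\BB|_{k+1,\Omega_h}^2 h^{2k+2}$.
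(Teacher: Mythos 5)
Your algebraic splitting of $T_3$ is legitimate, and the first two pieces can be treated essentially as you say, modulo a secondary issue you already flagged: a local $\WW^1_\infty$ (or $\LL^\infty$) stability bound for $\bcurl\,\BBi$ is not recorded in Lemma \ref{lm:int-i} ($\PWWd$ is not the Lagrange interpolant), so $\Vert\bcurl\,\BBi\Vert_{\LL^\infty(\Omega_h)}\lesssim\gunoinf$ would need a separate patch (e.g.\ inserting a local polynomial approximation of $\BB$ and using Lemma \ref{lm:inverse}). The genuine gap is in your treatment of the third piece $d(\BB;\EEi,\eeh)=(\bcurl(\EEi)\times\BB,\eeh)$. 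Since \eqref{eq:err-int} only gives $\Vert\bcurl\,\EEi\Vert\lesssim h^{k}\vert\BB\vert_{k+1,\Omega_h}$, a Young inequality weighted so as to leave $\tfrac{\theta h}{4}\Vert\eeh\Vert^2$ on one side necessarily puts $\tfrac{C}{\theta}\gunoinf^2 h^{2k-1}\vert\BB\vert_{k+1,\Omega_h}^2$ on the other, not $h^{2k+1}$: the exponent bookkeeping in your ``crucial move'' is off by a factor $h^{2}$, and $h^{2k-1}$ is not even enough for the $O(h^k)$ result. An unweighted Young gives at best $C\gunoinf\Vert\eeh\Vert^2+C\gunoinf h^{2k}\vert\BB\vert_{k+1,\Omega_h}^2$, which is still weaker than the stated bound; and since Lemma \ref{lm:t3} is reused verbatim in the four-field analysis, the $h^{2k+1}$ factor (which carries the extra $O(h^{1/2})$ in the convection-dominated regime) cannot be given up there.

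The structural point your plan misses is that the delicate contribution must never see a derivative of $\EEi$. The paper's proof integrates by parts so that $\EEi$ appears undifferentiated, paired with $\bcurl_h(\eeh\times\BB_h)$ plus face terms; the face terms are controlled by the trace estimate \eqref{eq:utilejump2-h}, which is where one $h^{2k+1}$ arises, and the volume term is split by writing $\BB_h=\EEh-\EEi+(I-\Pzerok{0})\BB+\Pzerok{0}\BB$. For the $\Pzerok{0}\BB$ piece, $\bcurl_h(\eeh\times\Pzerok{0}\BB)$ is a piecewise polynomial of degree $k-1$, so the orthogonality \eqref{eq:int-orth} of $\PWWd$ lets one replace it by $(I-\intcip)$ of that polynomial, and Lemma \ref{lm:cip} converts the result into gradient jumps of $\eeh$, absorbed into $\tfrac{\theta}{4}\normacip{\eeh}^2$ plus $\tfrac{C\theta}{4}h\Vert\eeh\Vert^2$, with the weighted sum $\sum_E h_E^{-1}\Vert\EEi\Vert_E^2\lesssim h^{2k+1}\vert\BB\vert_{k+1,\Omega_h}^2$ supplying the other $h^{2k+1}$ term. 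Your fallback suggestion (elementwise integration by parts, jumps absorbed into $\normacip{\eeh}$) moves in this direction, but without the $\Pzerok{0}$-splitting and the $\intcip$/orthogonality step you are still left with $\EEi$ tested against a discrete gradient, for which plain Cauchy--Schwarz again only yields $h^{2k}$; so as written the proposal does not establish the estimate \eqref{eq:T3}.
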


\begin{proof}
A vector calculus identity and an integration by parts yield
\begin{equation}
\label{eq:T30}
\begin{aligned}
T_3 &= d(\BB_h; \BBi, \eeh) - d(\BB; \BB, \eeh)
= 
d(\BB_h - \BB; \BB, \eeh) - d(\BB_h; \EEi, \eeh)
\\
& =
d(\EEh - \EEi; \BB, \eeh) - (\bcurl(\EEi) \times \BB_h, \eeh)
\\
& =
d(\EEh - \EEi; \BB, \eeh) + (\bcurl(\EEi), \eeh \times \BB_h)
\\
& =
d(\EEh - \EEi; \BB, \eeh) + 
\sum_{f \in \EdgesI}(\jump{\eeh}_f \times \BB_h, \EEi \times \nn_f)_f + 
(\EEi, \bcurl_h(\eeh \times \BB_h))
\\
& =: T_{3,1} + T_{3,2} + T_{3,3} \,.
\end{aligned}
\end{equation}
For $T_{3,1}$ applying the Cauchy-Schwarz inequality, the Young inequality and \eqref{eq:err-int}, we infer
\begin{equation}
\label{eq:T31}
\begin{aligned}
T_{3,1} & \leq 
\Vert \BB \Vert_{\WW^1_\infty(\Omega_h)}
(\Vert \EEh \Vert + \Vert \EEi \Vert) \Vert \eeh \Vert
\\
& \leq 
C \Vert \BB \Vert_{\WW^1_\infty(\Omega_h)} 
(\Vert \EEh \Vert^2 + \Vert \eeh \Vert^2) +  
C h^{2k+2} \Vert \BB \Vert_{\WW^1_\infty(\Omega_h)} \, \vert \BB \vert_{k+1,\Omega_h}^2 \,.
\end{aligned}
\end{equation}
Concerning $T_{3,2}$, recalling the definition of $\normacip{\cdot}$, from the Young inequality and \eqref{eq:utilejump2-h} we get 
\begin{equation}
\label{eq:T32}
\begin{aligned}
T_{3,2} & \leq 
\frac{\theta}{4} \sum_{f \in \EdgesI} \Vert \jump{\eeh}_f \times \BB_h \Vert_f^2  +
\frac{C}{\theta} \sum_{f \in \EdgesI} \Vert \EEi \Vert_f^2
\leq
\frac{\theta}{4} \normacip{\eeh}^2 + \frac{C}{\theta}  \, h^{2k+1} \, \vert \BB\vert_{k+1, \Omega_h}^2 \,.
\end{aligned}
\end{equation}
We now analyse the term $T_{3,3}$. Direct computations yield
\begin{equation}
\label{eq:T33}
\begin{aligned}
T_{3,3} & =  
\left(\EEi, \bcurl_h(\eeh \times (\BB_h - \BB))\right) +
\left(\EEi, \bcurl_h(\eeh \times \BB)\right)
\\
& =
\left(\EEi, \bcurl_h(\eeh \times \EEh) \right)  -
\left(\EEi, \bcurl_h(\eeh \times \EEi) \right) +
\\
& \quad + 
\left(\EEi, \bcurl_h(\eeh \times ((I - \Pi_0)\BB)) \right) +
(\EEi, \bcurl_h(\eeh \times (\Pi_0\BB))) 
\\
&=: 
\alpha_1 + \alpha_2 + \alpha_3 + \alpha_4\,.  
\end{aligned}
\end{equation}
For the term $\alpha_1$, from Cauchy-Schwarz inequality, we infer
\begin{equation}
\label{eq:T3alpha1}
\begin{aligned}
\alpha_1 &\leq 
\sum_{E \in \Omega_h}
\Vert  \EEi \Vert_E \left(\Vert \bnabla_h \eeh \Vert_{\LL^\infty(E)} \, 
\Vert \EEh \Vert_E +
\Vert  \eeh \Vert_{\LL^\infty(E)} 
\Vert  \bnabla_h \EEh \Vert_E \right)
\\
& \begin{aligned}
& \leq
\sum_{E \in \Omega_h} C
h_E \Vert  \bnabla \BB \Vert_E \,
h_E^{-5/2}\Vert \eeh \Vert_{E} \, 
\Vert \EEh \Vert_{E}
& \quad & \text{(by \eqref{eq:err-int}, Lm. \ref{lm:bramble} \&  \ref{lm:inverse})}
\\
& \leq
\sum_{E \in \Omega_h} C
\Vert   \BB \Vert_{\WW^1_\infty(E)} \,
\Vert \eeh \Vert_{E} \, 
\Vert \EEh \Vert_{E}
& \quad & \text{(H\"{o}lder ineq.)}
\\
&\leq 
C \Vert \BB \Vert_{\WW^1_\infty(\Omega_h)} 
(\Vert \eeh \Vert^2 +
 \Vert \EEh \Vert^2) \, .
& \quad & \text{(Young ineq.)}
\end{aligned}
\end{aligned}
\end{equation}
Using analogous computations we get
\begin{equation}
\label{eq:T3alpha2}
\begin{aligned}
\alpha_2 &\leq 
\sum_{E \in \Omega_h}
\Vert  \EEi \Vert_E \left(\Vert \bnabla_h \eeh \Vert_{\LL^\infty(E)} \, 
\Vert \EEi \Vert_E +
\Vert  \eeh \Vert_{\LL^\infty(E)} 
\Vert  \bnabla_h \EEi \Vert_E \right)
\\
& \begin{aligned}
& \leq
\sum_{E \in \Omega_h} C
h_E \Vert  \bnabla \BB \Vert_E \,
h_E^{k-3/2}\Vert \eeh \Vert_{E} \, 
\Vert \BB \Vert_{k+1,E}
&  & \text{(by \eqref{eq:err-int}, Lm. \ref{lm:bramble} \&  \ref{lm:inverse})}
\\
& \leq
\sum_{E \in \Omega_h} C
h_E^{k+1}
\Vert   \BB \Vert_{\WW^1_\infty(E)} \,
\Vert \eeh \Vert_{E} \, 
\Vert \BB \Vert_{k+1,E}
&  & \text{(H\"{o}lder ineq.)}
\\
&\leq 
C \Vert \BB \Vert_{\WW^1_\infty(\Omega_h)} 
 \Vert \eeh \Vert^2 +
C h^{2k+2}\Vert \BB \Vert_{\WW^1_\infty(\Omega_h)} \,
\vert \BB \vert_{k+1, \Omega_h}^2 \, .
&  & \text{(Young ineq.)}
\end{aligned}
\end{aligned}
\end{equation}
The term $\alpha_3$ can be bounded as follows
\begin{equation}
\label{eq:T3alpha3}
\begin{aligned}
\alpha_3 &\leq 
\Vert \EEi \Vert 
\left( \Vert |\bnabla_h \eeh | \, 
| (I - \Pi_0)\BB |\Vert +
\Vert \eeh \Vert \, 
\Vert (I - \Pi_0)\BB \Vert_{\WW^1_\infty(\Omega_h)}
\right)
&  & \text{(Cau.-Sch. ineq.)}
\\
&\leq 
C \Vert \EEi \Vert 
\Vert \eeh \Vert \, 
\Vert \BB \Vert_{\WW^1_\infty(\Omega_h)}
&  & \text{(by Rm. \ref{rm:support})}
\\
&\leq 
C \Vert \BB \Vert_{\WW^1_\infty(\Omega_h)} \,
\Vert \eeh \Vert^2 +
C h^{2k+2}\Vert \BB \Vert_{\WW^1_\infty(\Omega_h)} \,
\vert \BB \vert_{k+1, \Omega_h}^2 \, .
&  & \text{(Young ineq. \& \eqref{eq:err-int})}
\end{aligned}
\end{equation}
For the estimate of the term $\alpha_4$ we proceed as follows. 
Being $\eeh \in \ZZd$ and $\Pzerok{0}\BB_h$ constant on each element, 
the vector calculus identity 
\begin{equation}
\label{eq:utilecurl}
\bcurl(A \times B) = (\diver B) \, A - (\diver A) \, B + (\bnabla A) B - (\bnabla B) A
\end{equation}
yields
\[
\bcurl_h(\eeh \times (\Pzerok{0}\BB)) = (\bnabla_h \eeh) (\Pzerok{0}\BB) =: \pp_{k-1}  \in [\Pk_{k-1}(\Omega_h)]^3 \, .
\]
Therefore from \eqref{eq:int-orth}, the Cauchy-Schwarz inequality,
\eqref{eq:err-int} and Lemma \ref{lm:cip}, the Young inequality, and the continuity of the $L^2$-projection w.r.t. the $L^\infty$-norm  
we infer
\begin{equation}
\label{eq:T3alpha40}
\begin{aligned}
\alpha_4 & = 
( \EEi \,, \pp_{k-1}  )
=
( \EEi \,, (I - \intcip)\pp_{k-1}) 
\\
&\leq
\biggl(\sum_{E \in \Omega_h} h_E^{-1}\Vert \EEi \Vert_{E}^2 \biggr)^{1/2} 
\biggl(
\sum_{E \in \Omega_h} h_E\Vert (I - \intcip)\pp_{k-1}  \Vert_{E}^2 
\biggr)^{1/2}
\\
&\leq
\frac{C}{\theta}  \Vert \BB \Vert_{\WW^1_\infty(\Omega_h)}^2 h^{2k+1} \vert \BB \vert_{k+1,\Omega_h}^2 +  
\frac{\theta}{4} \frac{1}{\Vert \BB \Vert_{\WW^1_\infty(\Omega_h)}^2}  \sum_{f \in  \EdgesI} h_f^2
\Vert \jump{\pp_{k-1}}_f \Vert_{f}^2  \,.
\end{aligned}
\end{equation}
Furthermore employing a triangular inequality, bound \eqref{eq:utile-jump}, Lemma \ref{lm:bramble} and Lemma \ref{lm:inverse} we have
\[
\begin{aligned}
&\sum_{f \in  \EdgesI}  h_f^2 
\Vert \jump{\pp_{k-1}}_f \Vert_{f}^2
\leq
\sum_{f \in \EdgesI} \left(
h_f^2  \Vert \jump{\bnabla_h \eeh}_f \BB \Vert_{f}^2 
+  
h_f^2
\Vert \jump{(\bnabla_h \eeh) ((I - \Pzerok{0})\BB)}_f\Vert_{f}^2 
\right)
\\
& \leq
\sum_{f \in \EdgesI} 
h_f^2  \Vert \BB \Vert_{\LL^\infty(\omega_f)}^2 \Vert \jump{\bnabla_h \eeh}_f  \Vert_{f}^2 + 
C\sum_{E \in \Omega_h} 
h_E \Vert (I - \Pzerok{0}) \BB \Vert_{\LL^\infty(E)}^2  
\Vert \bnabla_h \eeh \Vert_E^2 
\\
& \leq
\sum_{f \in \EdgesI} 
h_f^2  \Vert \BB \Vert_{\LL^\infty(\omega_f)}^2 \Vert \jump{\bnabla_h \eeh}_f  \Vert_{f}^2 +
C\sum_{E \in \Omega_h} 
h_E \Vert \BB \Vert_{\WW^1_\infty(E)}^2  
\Vert  \eeh \Vert_E^2 
\\
& \leq
\Vert \BB \Vert_{\WW^1_\infty(\Omega_h)}^2 \normacip{\eeh}^2 + 
C\Vert \BB \Vert_{\WW^1_\infty(\Omega_h)}^2 h \, \Vert  \eeh \Vert^2 \,.
\end{aligned}
\]
Therefore from \eqref{eq:T3alpha40} we infer
\begin{equation}
\label{eq:T3alpha4}
\alpha_4 \leq 
\frac{C}{\theta}  \Vert \BB \Vert_{\WW^1_\infty(\Omega_h)}^2 h^{2k+1} \vert \BB\vert^2_{k+1, \Omega_h} +
\frac{\theta}{4}\normacip{\eeh}^2 + \frac{C \theta}{4} h \, \Vert  \eeh \Vert^2 \,.
\end{equation}
Inserting \eqref{eq:T3alpha1}--\eqref{eq:T3alpha4} in \eqref{eq:T33} we obtain
\begin{equation}
\label{eq:T33F}
\begin{aligned}
T_{3,3} &\leq  
C \left( \Vert \BB \Vert_{\WW^1_\infty(\Omega_h)} +  \frac{\theta}{4} h\right)  
( \Vert \EEh \Vert^2 + \Vert \eeh \Vert^2) +
\frac{\theta}{4}  \normacip{\eeh}^2 + 
\\
& 
\quad + \frac{C}{\theta}  \Vert \BB \Vert_{\WW^1_\infty(\Omega_h)}^2 h^{2k+1} \vert \BB \vert_{k+1,\Omega_h}^2
+
C \Vert \BB \Vert_{\WW^1_\infty(\Omega_h)} h^{2k+2} \vert \BB \vert_{k+1,\Omega_h}^2
\,.
\end{aligned}
\end{equation}
The thesis now follows combining in \eqref{eq:T30}, the bounds \eqref{eq:T31}, \eqref{eq:T32} and \eqref{eq:T33F}.
\end{proof}

\begin{lemma}[Estimate of $T_4$]
\label{lm:t4}
Under the assumptions of Proposition \ref{prp:error equation} and  the regularity assumption \cfan{(RA2-3f)}, the following holds
\begin{equation}
\label{eq:T4}
\begin{aligned}
T_4 \leq 
C  \gunoinf   \left(1 + \frac{1}{\theta} \right) \Vert \EEh\Vert^2 + 
\frac{\theta}{2}  \Vert \diver \EEh \Vert^2 +
C\gunoinf \vert \BB \vert_{k+1,\Omega_h}^2 \, h^{2k} +
C \gunoinf \vert \uu \vert_{k+1,\Omega_h}^2 h^{2k+2} \,.
\end{aligned}
\end{equation}
\end{lemma}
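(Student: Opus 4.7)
The plan is to decompose $T_4$ by inserting $\pm\, d(\BB; \EEh, \uui)$ and using $\BB - \BB_h = \EEi - \EEh$, yielding
\begin{equation*}
T_4 \;=\; \underbrace{d(\BB; \EEh, \eei)}_{T_{4,1}} \;+\; \underbrace{d(\EEi; \EEh, \uui)}_{T_{4,2}} \;-\; \underbrace{d(\EEh; \EEh, \uui)}_{T_{4,3}}\,.
\end{equation*}
All three pieces will be attacked by integration by parts, tailored to the present setting where the $H^1$-conforming field $\EEh$ plays the role of the ``regular'' factor while the $\HH(\diver)$-conforming $\eei$ (and, for $T_{4,3}$, $\uui$) plays the role of the ``rough'' factor, essentially reversing the situation encountered in Lemma \ref{lm:t3}.

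The delicate term is $T_{4,3} = ((\bcurl \EEh) \times \EEh, \uui)$, where $\EEh$ appears twice with a derivative. I intend to employ the identity $(\bcurl \EEh) \times \EEh = (\EEh \cdot \bnabla)\EEh - \tfrac12 \bnabla |\EEh|^2$: the gradient contribution $-\tfrac12(\bnabla_h|\EEh|^2, \uui)$ vanishes after an element-wise IBP, because $\diver \uui = 0$ locally and the resulting internal-face and boundary traces are null by continuity of $|\EEh|^2$, the normal continuity of BDM fluxes, and $\uui \cdot \nn = 0$ on $\partial\Omega$. The remaining piece $((\EEh \cdot \bnabla)\EEh, \uui)$ is then integrated by parts element-wise, producing $-((\diver \EEh)\uui, \EEh)$, $-((\EEh \cdot \bnabla_h)\uui, \EEh)$, and the face residual $\sum_{f \in \EdgesI} \int_f (\EEh \cdot \nn_f)(\EEh \cdot \jump{\uui}_f)$. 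Young's inequality on the first summand supplies the $(\theta/2)\|\diver \EEh\|^2$ contribution (using $\|\uui\|_{L^\infty} \lesssim \gunoinf$ from \eqref{eq:int-v-inf}); the second is absorbed into $C\gunoinf \|\EEh\|^2$ via $\|\bnabla_h \uui\|_{L^\infty} \lesssim \gunoinf$; and the face residual is controlled by combining $\|\jump{\uui}_f\|_{L^\infty(f)} \lesssim h_E \gunoinf$ with the trace/inverse bound $\sum_f h_E \|\EEh\|_f^2 \lesssim \|\EEh\|^2$, which rests on local quasi-uniformity \cfan{(MP1)}.

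For $T_{4,1} = (\bcurl \EEh, \BB \times \eei)$ the key device is a \emph{double} IBP. A first element-wise IBP moves $\bcurl$ off $\EEh$ and, using \eqref{eq:utilecurl} together with $\diver \BB = \diver \eei = 0$, yields $(\EEh, (\bnabla \BB)\eei) - (\EEh, (\BB \cdot \bnabla_h)\eei)$ plus the face residual $\sum_f \int_f (\BB \cdot \nn_f)(\EEh \cdot \jump{\eei}_f)$. A second IBP applied to $(\EEh, (\BB \cdot \bnabla_h)\eei)$ produces $-((\BB \cdot \bnabla)\EEh, \eei)$ together with exactly the same face quantity, so the face residuals cancel and only two bulk contributions survive. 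The first, $(\EEh, (\bnabla \BB)\eei)$, is directly bounded by $\gunoinf \|\EEh\| \cdot h^{k+1} |\uu|_{k+1,\Omega_h}$. The second, $((\BB \cdot \bnabla)\EEh, \eei)$, is the crucial point where the improved $h^{k+1}$ scaling must be extracted: writing $\BB = \Pzerok{0} \BB + (\BB - \Pzerok{0}\BB)$, the contribution of $\Pzerok{0} \BB$ vanishes because $(\Pzerok{0}\BB \cdot \bnabla)\EEh \in [\Pk_{k-1}(\Omega_h)]^3$ is orthogonal to $\eei$ by \eqref{eq:orth-v}, while the remainder is estimated by combining $\|\BB - \Pzerok{0}\BB\|_{L^\infty(E)} \lesssim h_E \gunoinf$, the polynomial inverse bound $\|\bnabla \EEh\|_E \lesssim h_E^{-1}\|\EEh\|_E$, and $\|\eei\|_E \lesssim h_E^{k+1}|\uu|_{k+1,E}$.

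Finally, $T_{4,2} = (\bcurl \EEh, \EEi \times \uui)$ is handled by a single element-wise IBP; the bulk terms involving $\diver \EEi$, $\bnabla \EEi$ and $(\bnabla_h \uui) \EEi$ are directly bounded through Lemma \ref{lm:int-i}, and the face residual $-\sum_f \int_f (\EEi \cdot \nn_f)(\EEh \cdot \jump{\eei}_f)$ is estimated by Cauchy-Schwarz using $\sum_f \|\EEi\|_f^2 \lesssim h^{2k+1}|\BB|_{k+1,\Omega_h}^2$ from \eqref{eq:utilejump2-h}, together with $\|\jump{\eei}_f\|_{L^\infty(f)} \lesssim h_E \gunoinf$ and $\sum_E h_E \|\EEh\|_E^2 \lesssim h \|\EEh\|^2$. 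The principal obstacle throughout is to recover the higher power $h^{2k+2}$ (rather than the naive $h^{2k}$) in the $|\uu|_{k+1,\Omega_h}^2$ term; this hinges on the combined use of the BDM orthogonality \eqref{eq:orth-v}, the face-term cancellation produced by the double IBP in $T_{4,1}$, and the $L^\infty$ interpolation estimate \eqref{eq:int-v-inf} in all face pairings.
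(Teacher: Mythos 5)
Your proposal is correct, but it follows a genuinely different route from the paper's proof. The paper splits $T_4 = d(\EEh-\EEi;\EEh,\eei) + d(\BB;\EEh,\eei) + d(\EEi-\EEh;\EEh,\uu)$, keeping the exact velocity $\uu$ in the test slot, and integrates by parts only the last piece (via \eqref{eq:utilecurl} and the skew-symmetry of $(\cdot,(\bnabla\,\cdot)\,\uu)$); the $\tfrac{\theta}{2}\Vert\diver\EEh\Vert^2$ term then comes from $(\EEh,(\diver(\EEh-\EEi))\,\uu)$, the $h^{2k}\vert\BB\vert^2_{k+1}$ term from $-(\EEi,(\bnabla\EEh)\uu)$ via an inverse estimate, and the common term $d(\BB;\EEh,\eei)$ is handled with no integration by parts at all: since $\bcurl(\EEh)\times\Pzerok{0}\BB\in[\Pk_{k-1}(\Omega_h)]^3$, the orthogonality \eqref{eq:orth-v} and Remark \ref{rm:support} give the $h^{2k+2}\vert\uu\vert^2_{k+1}$ contribution in two lines. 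You instead keep $\uui$ in the test slot, isolate the quadratic term $d(\EEh;\EEh,\uui)$, and extract $\diver\EEh$ from the identity $(\bcurl\EEh)\times\EEh=(\EEh\cdot\bnabla)\EEh-\tfrac12\bnabla|\EEh|^2$ plus element-wise integration by parts, while your double integration by parts on $d(\BB;\EEh,\eei)$ (whose face residuals do cancel, as you claim, thanks to the normal continuity of $\eei$, the continuity of $\EEh$, $\BB$, and the boundary conditions) reduces it to $(\EEh,(\bnabla\BB)\eei)+((\BB\cdot\bnabla)\EEh,\eei)$, after which \eqref{eq:orth-v} applied to $(\Pzerok{0}\BB\cdot\bnabla)\EEh$ yields the same $h^{2k+2}\vert\uu\vert^2_{k+1}$ gain; your $d(\EEi;\EEh,\uui)$ piece produces the unavoidable $h^{2k}\vert\BB\vert^2_{k+1}$ term through $\diver\EEi$ and $\bnabla\EEi$, consistent with the statement. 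The paper's treatment of $d(\BB;\EEh,\eei)$ is shorter, and your route is heavier on vector-calculus manipulations and face bookkeeping, but both hinge on the same two decisive ingredients (the BDM orthogonality and conceding the grad-div and $h^{2k}\vert\BB\vert^2_{k+1}$ terms). One cosmetic remark: your Young step for the $(\diver\EEh)\,\uui$ pairing gives a factor $\gunoinf^2/\theta$ in front of $\Vert\EEh\Vert^2$ rather than $\gunoinf/\theta$; this is exactly the same slack present in the paper's own bound for its $T_{4,4}$ (which carries $\Vert\uu\Vert^2_{\LL^\infty(\Omega)}$) and is immaterial since the final constant is allowed to depend on $\gunoinf$.
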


\begin{proof}
Direct computations yield
\[
\begin{aligned}
T_4 & =
d(\BB; \EEh, \uu) - d(\BB_h; \EEh, \uui) =
d(\BB_h; \EEh, \eei) + d(\BB - \BB_h; \EEh, \uu) 
\\
& = 
d(\EEh - \EEi; \EEh, \eei) +
d(\BB; \EEh, \eei)  + 
d(\EEi - \EEh; \EEh, \uu) \,.
\end{aligned}
\]
We now manipulate the last term in the sum above, recalling that both $\uu$ and $\BB$ are solenoidal and that, for the same reason the form $(\cdot, (\bnabla \cdot) \uu)$ is skew-symmetric,  we obtain
\[
\begin{aligned}
&d(\EEi - \EEh; \EEh, \uu)  
=(\bcurl(\EEh), (\EEi - \EEh) \times \uu) 
\\
&=(\EEh, \bcurl((\EEi - \EEh) \times \uu))
&  & \text{(int. by parts)}
\\
&=(\EEh, (\diver \uu) (\EEi - \EEh)) -
(\EEh, (\diver (\EEi - \EEh)) \uu) +
\\
& \quad  +
(\EEh, (\bnabla (\EEi - \EEh)) \uu) -
(\EEh, (\bnabla \uu) (\EEi - \EEh))
&  & \text{(by \eqref{eq:utilecurl})}
\\
&= 
(\EEh, (\diver (\EEh - \EEi)) \uu) +
(\EEh, (\bnabla \EEi) \uu) - 
(\EEh, (\bnabla \uu) (\EEi - \EEh))
&  & \text{($\diver \uu=0$, skew-sym.)}
\\
&= 
(\EEh, (\diver (\EEh - \EEi)) \uu) -
(\EEi, (\bnabla \EEh) \uu) -
(\EEh, (\bnabla \uu) (\EEi - \EEh)) \, .
&  & \text{(skew-sym.)}
\end{aligned}
\]
Therefore from the previous equivalences
\begin{equation}
\label{eq:T40}
\begin{aligned}
T_4 &= \Big[ d(\EEh - \EEi; \EEh, \eei) - (\EEh, (\bnabla \uu) (\EEi - \EEh)) \Big] + d(\BB; \EEh, \eei) +
\\
& \quad - 
(\EEi, (\bnabla \EEh) \uu) + 
(\EEh, (\diver (\EEh - \EEi)) \uu)
\\
& =: T_{4,1} + T_{4,2} + T_{4,3} + T_{4,4} \,. 
\end{aligned}
\end{equation}
The term $T_{4,1}$ can be bounded as follows
\begin{equation}
\label{eq:T41}
\begin{aligned}
T_{4,1} &\leq 
\left(\Vert  |\bnabla \EEh | \, |\eei |  \Vert + 
\Vert \EEh \Vert \, \Vert \uu \Vert_{\WW^1_\infty(\Omega_h)} \right)
\left(\Vert \EEh \Vert + \Vert \EEi \Vert \right)
& \quad & \text{(Cau.-Sch. ineq.)}
\\
 &\leq 
C \Vert \EEh \Vert \, \Vert \uu \Vert_{\WW^1_\infty(\Omega_h)} 
\left(\Vert \EEh \Vert + \Vert \EEi \Vert \right)
& \quad & \text{(by \eqref{eq:support})}
\\
 &\leq 
C \Vert \uu \Vert_{\WW^1_\infty(\Omega_h)} \, \Vert \EEh \Vert^2 + 
C h^{2k+2}\Vert \uu \Vert_{\WW^1_\infty(\Omega_h)} \, \vert \BB \vert_{k+1, \Omega_h}^2 \, .
& \quad & \text{(Young ineq. \& \eqref{eq:err-int})}
\end{aligned}
\end{equation}
Concerning the term $T_{4,2}$ we infer
\begin{equation}
\label{eq:T42}
\begin{aligned}
T_{4,2} &= 
(\bcurl(\EEh) \times \BB, \eei) =
(\bcurl(\EEh) \times (\BB - \Pi_0 \BB), \eei)
& \quad & \text{(by \eqref{eq:orth-v})}
\\
& \leq 
\Vert |\bnabla \EEh| \, |\BB - \Pi_0 \BB| \Vert \,
\Vert \eei \Vert
& \quad & \text{(Cau.-Sch. ineq.)}
\\
& \leq C
\Vert \EEh \Vert \,
\Vert \BB \Vert_{\WW^1_\infty(\Omega_h)} \,
\Vert \eei \Vert
& \quad & \text{(by \eqref{eq:support})}
\\
 &\leq 
C \Vert \BB \Vert_{\WW^1_\infty(\Omega_h)} \, \Vert \EEh \Vert^2 + 
C h^{2k+2}\Vert \BB \Vert_{\WW^1_\infty(\Omega_h)} \, \vert \uu \vert_{k+1, \Omega_h}^2 \, .
& \quad & \text{(Young ineq. \& \eqref{eq:int-v})}
\end{aligned}
\end{equation}
For the term $T_{4,3}$ we proceed as follows
\begin{equation}
\label{eq:T43}
\begin{aligned}
T_{4,3} &\leq 
\Vert \uu\Vert_{\LL^\infty(\Omega)}
\sum_{E \in \Omega_h} 
\Vert \EEi\Vert_E \, 
\Vert \bnabla \EEh\Vert_E 
& \quad & \text{(Cau.-Sch. ineq.)}
\\
&\leq C 
\Vert \uu\Vert_{\LL^\infty(\Omega)}
\sum_{E \in \Omega_h} 
h_E^k\vert \BB\vert_{k+1,E} \, 
\Vert \EEh\Vert_E 
& \quad & \text{(by \eqref{eq:err-int} \& Lm. \ref{lm:inverse})}
\\
 &\leq 
C \Vert \uu \Vert_{\LL^\infty(\Omega)} \, \Vert \EEh \Vert^2 + 
C h^{2k}\Vert \uu \Vert_{\LL^\infty(\Omega)} \, \vert \BB \vert_{k+1, \Omega_h}^2  \, .
& \quad & \text{(Young ineq.)}
\end{aligned}
\end{equation}
Finally for the term $T_{4,4}$, applying  the triangular inequality, 
Young inequality and approximation estimates we infer 
\begin{equation}
\label{eq:T44}
\begin{aligned}
T_{4,4} &\leq 
\Vert \EEh\Vert \, \Vert \diver (\EEh - \EEi) \Vert \, \Vert \uu \Vert_{\LL^\infty}   \\
&\leq
C \left(1 + \frac{1}{\theta}\right) \Vert \uu \Vert_{\LL^\infty(\Omega)}^2 \, \Vert \EEh \Vert^2 + 
\frac{\theta}{2} \Vert \diver \EEh\Vert^2 
+  C h^{2k} \vert \BB \vert_{k+1,\Omega_h}^2 \,.
\end{aligned}
\end{equation}
The proof now follows by \eqref{eq:T40}--\eqref{eq:T44}.
\end{proof}

\begin{lemma}[Estimate of $T_{5, 3\rm{f}}$]
\label{lm:t5}
Under the assumptions of Proposition \ref{prp:error equation} and  the regularity assumption \cfan{(RA2-3f)}, the following holds
\begin{equation}
\label{eq:T5}
T_{5, 3\rm{f}} \leq 
\frac{\theta}{2}  \Vert \diver \EEh \Vert^2 + 
 \frac{C}{\theta} \,  \vert \BB\vert_{k+1, \Omega_h}^2 \, h^{2k}\,.
\end{equation}
\end{lemma}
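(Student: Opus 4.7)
The estimate is the simplest of the five, essentially a direct consequence of Cauchy--Schwarz plus the approximation properties of the magnetic interpolant already recorded in Lemma \ref{lm:int-i}. The plan is to split $T_{5,3\rm f} = (\bdiver \EEi, \bdiver \EEh)$ into a term that gets absorbed on the left-hand side of \eqref{eq:error equation} and an interpolation remainder, then bound the remainder by the $\HH^{k+1}$-seminorm of $\BB$.

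First I would apply Cauchy--Schwarz on $\Omega$ to obtain
\[
T_{5,3\rm f} = (\bdiver \EEi, \bdiver \EEh) \leq \Vert \bdiver \EEi \Vert \, \Vert \bdiver \EEh \Vert .
\]
Then a weighted Young inequality with parameter $\theta > 0$ yields
\[
T_{5,3\rm f} \leq \frac{C}{\theta}\,\Vert \bdiver \EEi \Vert^2 + \frac{\theta}{2}\,\Vert \bdiver \EEh\Vert^2 ,
\]
where the second term is exactly the quantity appearing on the right-hand side of the target inequality (and will be absorbed later by the $\normamt{\EEh}^2$ contribution on the left-hand side of Proposition \ref{prp:error equation}, thanks to the $(\diver \BB_h, \diver \HH_h)$ stabilization term encoded in $\astabt$).

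It remains to control $\Vert \bdiver \EEi \Vert$. Since $|\bdiver \EEi| \leq C\,|\bnabla \EEi|$ pointwise, Lemma \ref{lm:int-i}, in particular the second bound in \eqref{eq:err-int} applied with the optimal regularity $\HH^{k+1}$, gives
\[
\Vert \bdiver \EEi \Vert^2 \lesssim \Vert \bnabla \EEi \Vert^2 \lesssim h^{2k}\, \vert \BB \vert_{k+1,\Omega_h}^2 ,
\]
which, combined with the Young inequality above, produces exactly \eqref{eq:T5}.

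There is no real obstacle here: the argument uses only Cauchy--Schwarz, Young's inequality, and the already available interpolation estimate for $\PWWd$. The only small bookkeeping point is the choice to place the $\theta/2$ factor on the $\Vert \diver \EEh\Vert^2$ side so that, once the sum $\sum_i T_i + T_{5,3\rm f}$ in Proposition \ref{prp:error equation} is assembled, the cumulative $\theta$-weighted contributions on $\Vert \diver \EEh\Vert^2$ (coming also from Lemma \ref{lm:t4}) can be absorbed into the $\ccoe\,\normamt{\EEh}^2$ term on the left by choosing $\theta$ small enough relative to $\ccoe$.
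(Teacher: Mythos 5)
Your proof is correct and follows exactly the paper's argument: Cauchy--Schwarz, a $\theta$-weighted Young inequality, and the interpolation bound \eqref{eq:err-int} applied to $\Vert \bdiver \EEi \Vert \lesssim \Vert \bnabla \EEi \Vert$. Nothing further is needed.
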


\begin{proof}
The proof easily follows by the Cauchy-Schwarz inequality, the Young inequality and the interpolation estimate \eqref{eq:err-int}.
\end{proof}

Combining Proposition \ref{prp:error equation} with Lemmas \ref{lm:t1}--\ref{lm:t5}, we finally obtain the main error estimate for the three-field scheme \eqref{eq:fem-3f}.

\begin{proposition}[Error estimate]
\label{prp:conv}
Let Assumption \cfan{(MA1)} hold. Furthermore, if $k=1$ let also Assumption \cfan{(MA2)} hold.
Then, under the consistency assumption \cfan{(RA1-3f)} and the regularity assumption \cfan{(RA2-3f)}
and assuming that the parameter $\bdma$ (cf. \eqref{eq:forme_d}) is sufficiently large and $h \lesssim 1$,
referring \eqref{eq:quantities} and to \eqref{eq:quantities3f},  the following holds
\begin{equation}
\label{eq:conv-3f}
\begin{aligned}
&\Vert (\uu - \uu_h)(\cdot, T) \Vert^2 + \Vert (\BB_h - \BB_h)(\cdot, T)\Vert^2 +
\int_0^T \normastab{(\uu - \uu_h)(t)}^2 \, {\rm d}t +
\\
&+
\int_0^T \normamt{(\BB - \BB_h)(\cdot, t)}^2 \, {\rm d}t   \lesssim 
(\ls^2 \Vert \uu \Vert_{L^2(0,T; \WW^{k+1}_\infty(\Omega_h))}^2 + 
\lmt^2 \Vert \BB \Vert_{L^2(0,T; \HH^{k+1}(\Omega_h))}^2) h^{2k} +
\\
&+
( \Vert \uu \Vert_{H^1(0,T; \HH^{k+1}(\Omega_h))}^2 + 
 \Vert \BB \Vert_{H^1(0,T; \HH^{k+1}(\Omega_h))}^2) h^{2k+2}
 \,,
\end{aligned}
\end{equation}
where the hidden constant depends also on $\gunoinf$.
\end{proposition}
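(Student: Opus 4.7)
The plan is to combine the error equation of Proposition~\ref{prp:error equation} with the term-by-term bounds in Lemmas~\ref{lm:t1}--\ref{lm:t5}, absorb the ``bad'' terms on the right-hand side into the coercivity term on the left, apply Gronwall's inequality, and finally invoke the interpolation estimates of Propositions~\ref{prp:interpolation} and \ref{prp:interpolationmt} together with the triangle inequality to pass from the discrete errors $\eeh,\EEh$ to $\uu-\uu_h,\BB-\BB_h$.

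More precisely, I first choose the uniform parameter $\theta$ (which appears in all the $T_i$ estimates) small enough so that the contributions $\theta\,\ns\normaunoh{\eeh}^2$, $\theta\normaupw{\eeh}^2$, $\theta\normacip{\eeh}^2$, $\theta\,\nm\|\bnabla \EEh\|^2$ and $\theta\|\diver \EEh\|^2$ appearing on the right-hand sides of \eqref{eq:T1}--\eqref{eq:T5} can all be absorbed into $\ccoe(\normastab{\eeh}^2+\normamt{\EEh}^2)$ on the left-hand side of \eqref{eq:error equation}. This absorption is uniform in $\ns$ and $\nm$ precisely because each such term matches a specific component of the discrete norm (this is the reason why the $\theta$-type bounds in the lemmas have been written with matching scalings). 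Since $h\lesssim 1$ by assumption, the factor $\tfrac{\theta}{4}h$ in Lemma~\ref{lm:t3} is harmless.

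After the absorption, the remaining right-hand side contributions split into: (i) terms proportional to $\|\eeh\|^2+\|\EEh\|^2$ with a factor $\lesssim \gunoinf(1+1/\theta)$, and (ii) pure interpolation residuals $\lesssim (\ls^2\|\uu\|_{\WW^{k+1}_\infty(\Omega_h)}^2+\lmt^2|\BB|_{k+1,\Omega_h}^2)h^{2k}+(|\uu_t|_{k+1,\Omega_h}^2+|\BB_t|_{k+1,\Omega_h}^2)h^{2k+2}$. The inequality then takes the schematic form
\[
\partial_t(\|\eeh\|^2+\|\EEh\|^2) + \tfrac{\ccoe}{2}(\normastab{\eeh}^2+\normamt{\EEh}^2) \lesssim \gunoinf\,(\|\eeh\|^2+\|\EEh\|^2) + \mathcal{R}(t),
\]
with $\mathcal{R}(t)$ the residual of type (ii). Integrating on $[0,T]$, recalling that by \eqref{eq:ini cond-d} one has $\eeh(\cdot,0)=\EEh(\cdot,0)=0$, and applying the integral Gronwall lemma yields the desired bound for $\|\eeh(\cdot,T)\|^2+\|\EEh(\cdot,T)\|^2$ and for $\int_0^T(\normastab{\eeh}^2+\normamt{\EEh}^2)\,{\rm d}t$; the exponential prefactor $\exp(CT\gunoinf)$ is absorbed into the hidden constant (which is allowed to depend on $\gunoinf$ and $T$).

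The last step is a triangle inequality in the norms on the left-hand side of \eqref{eq:conv-3f}: writing $\uu-\uu_h=\eei-\eeh$ and $\BB-\BB_h=\EEi-\EEh$, the discrete norms of $\eei$ and $\EEi$ are estimated by Propositions~\ref{prp:interpolation} and \ref{prp:interpolationmt}, which give exactly bounds of the required order $\ls^2\|\uu\|_{\WW^{k+1}_\infty}^2 h^{2k}$ and $\lmt^2|\BB|_{k+1}^2 h^{2k}$. The chief technical obstacle I expect is bookkeeping: one must track the dependencies on $\gunoinf$, on $\data$ (entering through $\ls^2$ via bounds \eqref{eq:stab3f-1}--\eqref{eq:stab3f-2}), and on the stabilization parameter $\bdma$, while ensuring that no inverse power of $\ns$ or $\nm$ is produced and that no norm of the continuous pressure $p$ enters the right-hand side (pressure robustness). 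This is guaranteed exactly by the fact that the interpolant $\PVVd$ sends solenoidal fields into $\ZZd$ (Lemma~\ref{lm:int-v}$(i)$), so that $b(\eeh,p_h)=b(\eei,p_h)=0$ already in the derivation of the error equation, and by the careful matching of each $\theta$-term with the appropriate component of $\normastab{\cdot}$ or $\normamt{\cdot}$.
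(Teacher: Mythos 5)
Your proposal is correct and follows essentially the same route as the paper: combine the error inequality of Proposition \ref{prp:error equation} with Lemmas \ref{lm:t1}--\ref{lm:t5}, fix $\theta$ proportional to $\ccoe$ (the paper takes $\theta=\ccoe/2$) so the $\theta$-terms are absorbed into the coercivity norms, apply Gronwall with the zero initial discrete errors, and conclude by the triangle inequality together with Propositions \ref{prp:interpolation} and \ref{prp:interpolationmt}. No gaps.
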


\begin{proof}
We start by noticing that from \eqref{eq:int-v}, \eqref{eq:err-int}, Proposition \ref{prp:interpolation} and Proposition \ref{prp:interpolationmt} we infer
\begin{equation}
\label{eq:conv-3f-ei}
\begin{aligned}
\Vert \eei(\cdot, T) \Vert^2 &+ \Vert \EEi(\cdot, T)\Vert^2 +
\int_0^T \normastab{\eei(\cdot, t)}^2 \, {\rm d}t  +
\int_0^T \normamt{\EEi(\cdot, t)}^2 \, {\rm d}t  
\\
&\lesssim 
h^{2k+2} \left(\vert \uu(\cdot, T) \vert_{k+1, \Omega_h}^2 + 
\vert \BB(\cdot, T)\vert_{k+1, \Omega_h}^2\right) +
\\
&+
(\ls^2 \Vert \uu \Vert_{L^2(0,T; \WW^{k+1}_\infty(\Omega_h))}^2 + 
\lmt^2 \Vert \BB \Vert_{L^2(0,T; \HH^{k+1}(\Omega_h))}^2) h^{2k} 
 \,.
\end{aligned}
\end{equation}
From Proposition \ref{prp:error equation} and Lemmas \ref{lm:t1}--\ref{lm:t5} considering in  \eqref{eq:T1},  \eqref{eq:T2},  \eqref{eq:T3},  \eqref{eq:T4} and  \eqref{eq:T5}
$\theta = \frac{\ccoe}{2}$  (observing that consequently $\theta^{-1}$ is uniformly bounded) we obtain 
\[
\begin{aligned}
\partial_t \Vert \eeh  \Vert^2 &+ 
\partial_t \Vert \EEh \Vert^2 +
\left(\normastab{\eeh}^2 +
\normamt{\BB_h}^2 \right) 
\lesssim  
\left(\gunoinf + \frac{\ccoe}{8}h\right)\left( \Vert \eeh \Vert^2 + \Vert \EEh \Vert^2 \right) + \\
& +
 \, (\ls^2 \, \Vert \uu \Vert_{\WW^{k+1}_\infty(\Omega_h)}^2 + \lmt^2 \, |\BB|_{k+1, \Omega_h}^2) h^{2k} + 
\gunoinf (1 + \gunoinf h) |\BB|_{k+1, \Omega_h}^2 \, h^{2k}+
\\
& +
\gunoinf  (
|\uu|_{k+1,\Omega_h}^2 + |\BB|_{k+1,\Omega_h}^2 + 
|\uu_t|_{k+1,\Omega_h}^2 + |\BB_t|_{k+1,\Omega_h}^2) h^{2k+2}
\end{aligned}
\]
with initial condition 
$\eeh(\cdot, 0) = \EEh(\cdot, 0) = 0$ (cf. \eqref{eq:ini cond-d}).
Therefore, employing the Gronwall lemma we finally have
\begin{equation}
\label{eq:conv-3f-eh}
\begin{aligned}
\Vert \eeh(\cdot, T) \Vert^2 &+ \Vert \EEh(\cdot, T)\Vert^2 +
\int_0^T \normastab{\eeh(\cdot, t)}^2 \, {\rm d}t  +
\int_0^T \normamt{\EEh(\cdot, t)}^2 \, {\rm d}t 
\\
&\lesssim 
(\ls^2 \Vert \uu \Vert_{L^2(0,T; \WW^{k+1}_\infty(\Omega_h))}^2 + 
\lmt^2 \Vert \BB \Vert_{L^2(0,T; \HH^{k+1}(\Omega_h))}^2) h^{2k} +
\\
&+
( \Vert \uu \Vert_{H^1(0,T; \HH^{k+1}(\Omega_h))}^2 + 
 \Vert \BB \Vert_{H^1(0,T; \HH^{k+1}(\Omega_h))}^2) h^{2k+2}
 \,,
\end{aligned}
\end{equation}
where the hidden constant depends also on $\gunoinf$.
The proof now follows by the triangular inequality.
\end{proof}

\begin{remark}\label{rem:4F-teaser}
The above result proves the quasi-robustness of the scheme and guarantees an $O(h^k)$ convergence rate for regular solutions. Furthermore, the pressure robustness of the method is reflected by the independence of the estimates from the pressure variable.
On the other hand, as anticipated in Remark \ref{rem:3Fto4F}, the above analysis is unable to deliver an $O(h^{k+1/2})$ pre-asymptotic error estimate whenever $\ns,\nm$ are small. The main reason is the term $T_4$, and more specifically $T_{4,3}$ and $T_{4,4}$. The term $T_{4,3}$ could be dealt with by adding a suitable stabilization term for the magnetic field, see $K_h$ in \eqref{eq:K}, and developing estimates as for \eqref{eq:T4-3-f} below. Term $T_{4,4}$ is more subtle, and directly related to the imposition of the divergence free condition for the field $\BB_h$. In the four-field scheme, we are able to deal with such term by strengthening such solenoidal condition at the discrete level with the introduction of a Lagrange multiplier.
\end{remark}

\section{Theoretical analysis of the four-field scheme}
\label{sec:theo-4f}

We preliminary make the following assumption on the velocity solution $\uu$ and the magnetic solution $\BB$ of problem \eqref{eq:variazionale}.

\smallskip
\noindent
\textbf{(RA1-4f) Regularity assumption for the consistency:}

\noindent
Let $(\uu, p, \BB)$ be the solution  of Problem \eqref{eq:variazionale}. 
Then 
$\uu(\cdot, t) \in \ZZs$ and
$\BB(\cdot, t) \in \WWs$ (cf. \eqref{eq:stars})
for a.e. $t \in I$.

\smallskip
\noindent
Under the Assumption \textbf{(RA1-4f)}, the discrete forms in \eqref{eq:forme_d} and \eqref{eq:J} satisfy for a.e. $t \in I$ and for all 
$\cc \in \ZZs \oplus \ZZd$ and $\TT \in \WWd$ the  consistency property
\eqref{eq:forme_con}, moreover referring to the forms \eqref{eq:K} and \eqref{eq:Y}, the following hold
\begin{equation}
\label{eq:forme_con4f}
K_h(\cc; \BB, \HH_h) = 0 \,,
\quad 
b(\BB, \psi_h) = 0 \,,
\qquad \text{$\forall \HH_h \in \WWd$, \, $\forall \psi_h \in \Rd$}
\end{equation}
i.e. all the forms in \eqref{eq:fem-4f} are consistent.

\subsection{Stability analysis}
\label{sub:stab-4f}


Recalling the definition \eqref{eq:Z_h}, consider the form
\[
\astabf \colon 
(\ZZd\times \WWd) \times
(\ZZd\times \WWd) \times
(\ZZd\times \WWd) \to \R 
\]
defined by
\begin{equation}
\label{eq:astabf}
\begin{aligned}
\astabf&((\cc, \TT); (\uu_h, \BB_h), (\vv_h, \HH_h))  :=
\ns \ash(\uu_h, \vv_h) + \nm \am(\BB_h, \HH_h) +
\\  
&
+ J_h(\TT; \uu_h, \vv_h) 
+ c_h(\cc; \uu_h, \vv_h) 
-d(\TT; \BB_h, \vv_h) 
+d(\TT; \HH_h, \uu_h) + K_h(\cc; \BB_h, \HH_h) 
 \,.
\end{aligned}
\end{equation}
Then Problem \eqref{eq:fem-4f} can be formulated as follows: find
$\uu_h \in L^\infty(0, T; \ZZd) $, $\BB_h \in L^\infty(0, T; \WWd)$, $\phi_h \in L^2(0, T; \Rd)$,
such that for a.e. $t \in I$
\begin{equation}
\label{eq:stab fem-4f}
\left \{
\begin{aligned}
\left( \partial_t \uu_h, \vv_h\right) + 
\left( \partial_t \BB_h, \HH_h\right) +  
\astabf((\uu_h, \BB_h), (\uu_h, \BB_h), (\vv_h, \HH_h))
-b(\HH_h, \phi_h)
&=\\
= (\ff, \vv_h)   + (\GG, \HH_h)  &
\\
b(\BB_h, \psi_h) + Y_h(\phi_h, \psi_h) & = 0
\end{aligned}
\right.
\end{equation}
for all $(\vv_h, \HH_h) \in \ZZd \times \WWd$, and for all $\psi_h \in \Rd$,
coupled with initial conditions \eqref{eq:ini cond-d}.

For any $\cc \in \ZZd$ we define the following norm and semi-norms  on  $\WW$ and $\Pkc_k(\Omega_h)$:
\begin{equation}
\label{eq:norme mag 4f}
\begin{gathered}
\normaKc{\BB}^2 :=
\sum_{f \in \EdgesI} h_f^2 \max \{\Vert \cc\Vert_{L^\infty(\omega_f)}^2 , 1 \} \Vert\jump{\bnabla \BB}_f \Vert_f^2 \,,
\quad
\normamfc{\BB}^2 := 
\nm \Vert \bnabla \BB \Vert^2  + \normaKc{\BB}^2,
\\
\normaY{\phi_h}^2 := \bdmY
\sum_{f \in \EdgesI} h_f^2
\Vert \jump{\nabla \phi_h}_f \Vert_f^2 \,.
\end{gathered}
\end{equation}

The following results are instrumental to prove the well-posedness of problem \eqref{eq:stab fem-4f}.
Since some of the derivations here below are standard, we do not provide the full proof for all the results.

\begin{proposition}[Coercivity of $\astabf$]
\label{prp:coef4}
Under the mesh assumption \cfan{(MA1)} 
let $\astabf$ be the form \eqref{eq:astabf}.
If the parameter $\bdma$ in \eqref{eq:forme_d} is sufficiently large there exists a real positive constant $\ccoe$ such that for any $\cc \in \ZZs \oplus \ZZd$ and $\TT \in \WWc$
the following holds
\[
\astabf((\cc, \TT); (\vv_h, \HH_h), (\vv_h, \HH_h)) \geq 
\ccoe \left( \normaV{\vv_h}^2 + \normamfc{\HH_h}^2\right) 
\]
for all $\vv_h \in \ZZd$, $\HH_h \in \WWc$.
\end{proposition}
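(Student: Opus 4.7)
The plan is to mimic the argument used for Proposition \ref{prp:coe}, noting the only substantial difference between the two schemes: in $\astabf$ the explicit grad--div penalty $(\diver\BB_h,\diver\HH_h)$ of the three-field case has been replaced by the CIP-type term $K_h(\cc;\BB_h,\HH_h)$, while the magnetic target norm $\normamt{\cdot}$ becomes $\normamfc{\cdot}$.

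First I would set $\uu_h=\vv_h$ and $\BB_h=\HH_h$ in \eqref{eq:astabf}: the two coupling contributions $-d(\TT;\HH_h,\vv_h)+d(\TT;\HH_h,\vv_h)$ cancel exactly, so one is left with the five ``diagonal'' terms $\ns\ash$, $\nm\am$, $J_h$, $c_h$ and $K_h$ evaluated at $(\vv_h,\HH_h)$. Then each summand is handled separately. Standard interior penalty DG coercivity (\cite{DiPietro}, Lemmas 4.12 and 2.18) yields $\ns\ash(\vv_h,\vv_h)\gtrsim\ns\normaunoh{\vv_h}^2$ provided $\bdma$ is sufficiently large. Since $\cc\in\ZZs\oplus\ZZd$ is divergence-free, an integration by parts in the volume term of $c_h$ cancels against the consistency face term, and combining with the upwind jump contribution produces $c_h(\cc;\vv_h,\vv_h)\gtrsim \normaupwc{\vv_h}^2$. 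The terms $J_h(\TT;\vv_h,\vv_h)$ and $K_h(\cc;\HH_h,\HH_h)$ coincide, by their very definitions \eqref{eq:J}--\eqref{eq:K}, with $\normacipT{\vv_h}^2$ and $\normaKc{\HH_h}^2$. Finally, the embedding \eqref{eq:well3}, which holds on the convex polyhedron $\Omega$, gives $\nm\am(\HH_h,\HH_h)\geq c_\Omega\nm\Vert\HH_h\Vert^2_{\WWc}\gtrsim \nm\Vert\bnabla\HH_h\Vert^2$. Summing the five lower bounds reproduces $\ccoe(\normaV{\vv_h}^2+\normamfc{\HH_h}^2)$.

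No genuine obstacle is foreseen. Compared with the three-field case one loses the strong $L^2$ control on $\diver\HH_h$ that came directly from the bilinear form, but this quantity no longer appears in the target norm $\normamfc{\cdot}$, and the $\Vert\bnabla\HH_h\Vert$ control extracted from \eqref{eq:well3} suffices. The only point to monitor is that the hidden constant be independent of $\cc$, $\TT$, $\ns$, $\nm$; this follows because the lower bounds on $c_h$, $J_h$ and $K_h$ produce seminorms already weighted by the intrinsic factors $|\cc\cdot\nn_f|$, $\max\{\Vert\TT\Vert_{\boldsymbol{L}^\infty(\omega_f)}^2,1\}$ and $\max\{\Vert\cc\Vert_{\boldsymbol{L}^\infty(\omega_f)}^2,1\}$ respectively, while the DG/embedding bounds are uniform by construction.
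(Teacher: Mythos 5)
Your argument is correct and is exactly the route the paper intends: the paper omits the proof of this proposition as standard, but its proof of the three-field analogue (Proposition \ref{prp:coe}) is precisely the combination you use — cancellation of the two coupling terms $d$, standard DG coercivity of $\ash$ and of the upwinded $c_h$ for solenoidal $\cc$ with vanishing normal trace, the definitional identification of $J_h$ and $K_h$ with $\normacipT{\cdot}^2$ and $\normaKc{\cdot}^2$, and the embedding \eqref{eq:well3} for $\nm\am$. No gap to report.
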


\begin{proposition}[Inf-sup stability]
\label{prp:infsup}
There exists a real positive constant $\widehat{\beta} >0$ such that for all $\psi_h \in \Rd$ the following holds
\[
\widehat{\beta}\Vert \psi_h\Vert \leq
 \frac{h^{1/2}}{\bdmY^{1/2}} \normaY{\psi_h} + \sup_{\HH_h \in \WWd} \frac{b(\HH_h, \psi_h)}{\Vert \bnabla \HH_h\Vert} \,.
\]
\end{proposition}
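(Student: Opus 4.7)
The plan is the classical equal-order pressure stabilization argument, in the spirit of Burman--Hansbo for CIP-stabilized Stokes pairs. Given $\psi_h \in \Rd$, I first apply the continuous Bogovskii result (surjectivity of $\diver:\Hunozero \to L^2_0(\Omega)$) to obtain $\vv \in \Hunozero \subset \Hunoenne$ with $\diver \vv = \psi_h$ and $\|\vv\|_{1} \lesssim \|\psi_h\|$. I then construct $\HH_h = \Pi_h \vv \in \WWd$, where $\Pi_h$ is a Scott--Zhang / Fortin-type quasi-interpolation preserving the zero boundary trace. Beyond the standard $H^1$-stability $\|\bnabla \HH_h\| \lesssim \|\vv\|_1$ and local $L^2$-approximation, the crucial extra requirement is a Fortin-type orthogonality $(\vv - \HH_h, \qq_h) = 0$ for every $\qq_h \in [\mathbb{O}_{k-1}(\Omega_h)]^3$, which can be enforced by augmenting the standard Scott--Zhang with a local correction using the higher-order degrees of freedom of $\WWd$.

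With $\HH_h$ in hand, I compute $b(\HH_h, \psi_h)$ by writing $\HH_h = \vv + (\HH_h - \vv)$: the first summand contributes $(\diver \vv, \psi_h) = \|\psi_h\|^2$, while the second, after elementwise integration by parts (the boundary contribution vanishes since both $\vv$ and $\HH_h$ have zero normal trace), equals $-(\HH_h - \vv, \nabla \psi_h)$. Since $\nabla \psi_h \in [\Pk_{k-1}(\Omega_h)]^3$, Lemma \ref{lm:cip} provides the projection $\intcip \nabla \psi_h \in [\mathbb{O}_{k-1}(\Omega_h)]^3$. Decomposing $\nabla \psi_h = (I - \intcip)\nabla \psi_h + \intcip \nabla \psi_h$, the Fortin orthogonality annihilates the second part, while the first is bounded via a weighted Cauchy--Schwarz,
\[
|(\HH_h - \vv, (I-\intcip)\nabla \psi_h)| \leq \Big(\sum_{E \in \Omega_h} h_E^{-1}\|\HH_h - \vv\|_E^2\Big)^{1/2}\Big(\sum_{E \in \Omega_h} h_E \|(I-\intcip)\nabla \psi_h\|_E^2\Big)^{1/2},
\]
where the first factor is controlled by the local $L^2$-approximation of $\Pi_h$ (yielding $\lesssim h^{1/2} \|\vv\|_1$) and the second by Lemma \ref{lm:cip} (yielding $\lesssim \bdmY^{-1/2}\normaY{\psi_h}$). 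The overall bound is thus $C (h^{1/2}/\bdmY^{1/2}) \|\psi_h\| \normaY{\psi_h}$.

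Collecting, $b(\HH_h, \psi_h) \geq \|\psi_h\|^2 - C (h^{1/2}/\bdmY^{1/2}) \|\psi_h\| \normaY{\psi_h}$. Dividing by $\|\bnabla \HH_h\| \leq C_0 \|\psi_h\|$ and taking the supremum over $\WWd$ gives the stated inequality with a uniform constant $\widehat{\beta}$. The principal technical obstacle lies in the construction of the Fortin-orthogonal quasi-interpolant $\Pi_h$ against $[\mathbb{O}_{k-1}(\Omega_h)]^3$: for $k \geq 2$ this proceeds by a standard local correction exploiting the higher-order degrees of freedom available in $\WWd$, whereas for $k=1$ the macro-element assumption \cfan{(MA2)} becomes essential, since then $\mathbb{O}_0 = \Pk_0(\widetilde{\Omega}_h)$ is coarse enough for the required orthogonality to be achievable via a constant-preserving Cl\'ement-type projection on the macro-mesh.
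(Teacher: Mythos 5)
Your argument is correct and is essentially the paper's own proof: a divergence lifting $\HH \in \HH^1_0(\Omega)$ with $\diver \HH = \psi_h$ and $\Vert \bnabla \HH \Vert \lesssim \Vert \psi_h \Vert$, an interpolant onto $\WWd$ orthogonal to $[\mathbb{O}_{k-1}(\Omega_h)]^3$, integration by parts, the splitting of $\nabla \psi_h$ through $\intcip$, a weighted Cauchy--Schwarz combined with Lemma \ref{lm:cip} to produce the $h^{1/2}\bdmY^{-1/2}\normaY{\psi_h}$ term, and the $H^1$-stability of the interpolant to pass to the supremum (the paper follows the same route, patterned after \cite[Lemma 6]{BF:2007} but without quasi-uniformity). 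The only difference is that what you single out as the principal technical obstacle --- building a Fortin-type quasi-interpolant with the orthogonality against $[\mathbb{O}_{k-1}(\Omega_h)]^3$ --- is already provided by the paper as the operator $\PWWd$ of Lemma \ref{lm:int-i} (orthogonality \eqref{eq:int-orth} and local estimates \eqref{eq:err-int}, covering $k=1$ under \cfan{(MA2)}), so no additional Scott--Zhang construction or local correction is needed.
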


\begin{proof}
The proof follows the same guidelines in \cite[Lemma 6]{BF:2007}, but without making use of a quasi-uniformity mesh assumption.
In the following $C$ will denote a generic uniform positive constant.
Let $\psi_h \in \Rd$, from \cite[Corollary 2.4]{Girault-book} there exists $\HH \in \HH^1_0(\Omega)$ such that
\begin{equation}
\label{eq:infsup0}
\diver \HH = \psi_h \,, 
\qquad \qquad 
\Vert \bnabla \HH \Vert \leq C \Vert \psi_h \Vert \,.
\end{equation} 
Employing the interpolation operator $\PWWd$ introduced in Lemma \ref{lm:int-i} we infer
\begin{equation}
\label{eq:infsup1}
\begin{aligned}
\Vert \psi_h \Vert^2 &= 
(\diver \HH, \psi_h) =
(\diver (\HH - \HHi), \psi_h) + (\diver (\HHi), \psi_h)
\\
&=
(\HH - \HHi, \nabla \psi_h) + b(\HHi, \psi_h)
& \quad & \text{(int. by parts)}
\\
&=
(\HH - \HHi, (I - \intcip)(\nabla \psi_h)) + b(\HHi, \psi_h)
& \quad & \text{(by \eqref{eq:int-orth})}
\\
& =: T_1 + T_2 \,.
\end{aligned}
\end{equation}
For the addendum $T_1$ a Cauchy-Schwarz inequality, bound \eqref{eq:err-int}, Lemma \ref{lm:cip} combined with the definition of norm $\normaY{\cdot}$, and bound \eqref{eq:infsup0} imply
\begin{equation}
\label{eq:infsup2}
\begin{aligned}
T_1 & \leq
\biggl(\sum_{E \in \Omega_h} h_E^{-1} \Vert \HH - \HHi\Vert_E^2\biggr)^{1/2}
\biggl(\sum_{E \in \Omega_h} h_E \Vert  (I - \intcip)(\nabla \psi_h)\Vert_E^2\biggr)^{1/2}
\\
& \leq 
C h^{1/2} \Vert \bnabla \HH \Vert \, 
\biggl( 
\sum_{f \in \EdgesI} h_f^2 \Vert \jump{\nabla \psi_h}_f\Vert_f^2
\biggr)^{1/2}
\leq 
C \frac{h^{1/2}}{\bdmY^{1/2}}  \, \normaY{\psi_h} \, \Vert \psi_h \Vert \,.
\end{aligned}
\end{equation}
Concerning $T_2$, combining \eqref{eq:err-int} and \eqref{eq:infsup0} we obtain
\begin{equation}
\label{eq:infsup3}
\begin{aligned}
T_2 &
\leq C 
\frac{b(\HHi, \psi_h)}{\Vert \bnabla \HH\Vert} \Vert \psi_h\Vert
\leq C 
\frac{b(\HHi, \psi_h)}{\Vert \bnabla \HHi\Vert} \Vert \psi_h\Vert
\leq 
 C \sup_{\HH_h \in \WWd}
\frac{b(\HH_h, \psi_h)}{\Vert \bnabla \HH_h\Vert} \Vert \psi_h\Vert \,.
\end{aligned}
\end{equation}
The proof easily follows from \eqref{eq:infsup1}--\eqref{eq:infsup3}.
\end{proof}

\begin{proposition}[Well-posedness of \eqref{eq:fem-4f}]
\label{prp:well-4f}
Under the mesh assumption \cfan{(MA1)} 
if the parameter $\bdma$ in \eqref{eq:forme_d} is sufficiently large, Problem \eqref{eq:stab fem-4f} admits a unique solution $(\uu_h, \BB_h, \phi_h)$. 
Moreover the following bounds hold (cf. \eqref{eq:data})
\begin{align}
\label{eq:stab4f-1}
\Vert \uu_h \|_{L^\infty(0, T; \Ldue)}^2 + 
\Vert \BB_h \|_{L^\infty(0, T; \Ldue)}^2 
&\lesssim \data^2 \,,
\\
\label{eq:stab4f-2}
\Vert \uu_h(\cdot, T) \Vert^2 + \Vert \BB_h(\cdot, T) \Vert^2 +
\int_0^T \|\uu_h(\cdot, t)\|_{(\uu_h(\cdot, t), \BB_h(\cdot, t))}^2 \, {\rm d}t  &+ \nonumber
\\
+\int_0^T \|\BB_h(\cdot, t)\|_{4\rm f, \uu(\cdot, t)}^2 \, {\rm d}t  
+\int_0^T \normaY{\phi_h(\cdot, t)}^2 \, {\rm d}t  
&\lesssim \data^2 \,.
\end{align}
\end{proposition}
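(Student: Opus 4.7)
The plan is to follow the strategy of Proposition~\ref{prp:well-3f}, with the extra ingredient being the correct treatment of the Lagrange multiplier $\phi_h$. First, I would recast \eqref{eq:stab fem-4f} as a differential-algebraic finite-dimensional Cauchy problem and reduce it to a well-posed ODE for $(\uu_h, \BB_h)$. The key step here is showing that, at each time, $\phi_h$ is uniquely and Lipschitz-continuously determined by $(\uu_h, \BB_h)$ and the data. Indeed, the second equation of \eqref{eq:stab fem-4f} expresses $b(\HH_h, \phi_h)$ in terms of $(\partial_t \BB_h, \uu_h, \BB_h, \GG)$, while the fourth equation gives $b(\BB_h, \psi_h) = -Y_h(\phi_h, \psi_h)$. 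Feeding these relations into Proposition~\ref{prp:infsup} produces a uniquely solvable linear algebraic equation for $\phi_h$ with Lipschitz dependence on the remaining variables. The Lipschitz continuity of $\astabf$ and classical ODE theory (as in \cite{layton2008}) then furnish a unique global solution.

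Second, for \eqref{eq:stab4f-1} I would test \eqref{eq:stab fem-4f} with $(\vv_h, \HH_h, \psi_h) = (\uu_h, \BB_h, \phi_h)$. Two crucial cancellations occur: the coupling terms $-b(\BB_h, \phi_h)$ coming from the $\HH_h$-line and $+b(\BB_h, \phi_h)$ coming from the $\psi_h$-line cancel; and the skew-symmetric Lorentz terms $-d(\BB_h;\BB_h,\uu_h)+d(\BB_h;\BB_h,\uu_h)$ inside $\astabf$ also cancel. What remains is
\[
\tfrac{1}{2}\partial_t\|\uu_h\|^2 + \tfrac{1}{2}\partial_t\|\BB_h\|^2 + \astabf((\uu_h,\BB_h);(\uu_h,\BB_h),(\uu_h,\BB_h)) + Y_h(\phi_h,\phi_h) = (\ff,\uu_h)+(\GG,\BB_h).
\]
Proposition~\ref{prp:coef4} bounds the $\astabf$-term from below by $\ccoe(\normaV{\uu_h}^2+\normamfc{\BB_h}^2)\geq 0$, and $Y_h(\phi_h,\phi_h)=\normaY{\phi_h}^2\geq 0$. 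Thus I can repeat the argument in \eqref{eq:bound3f-1}: apply Cauchy--Schwarz to the right-hand side, divide through by $(\|\uu_h\|^2+\|\BB_h\|^2)^{1/2}$, integrate in time, and invoke \eqref{eq:initial-bound} to deduce \eqref{eq:stab4f-1}.

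Third, for \eqref{eq:stab4f-2} I would integrate the differential identity above over $(0,T)$ and use Proposition~\ref{prp:coef4} together with \eqref{eq:stab4f-1} (handling the right-hand side exactly as at the end of the proof of Proposition~\ref{prp:well-3f}) to dominate $\int_0^T (\normaV{\uu_h}^2 + \normamfc{\BB_h}^2 + \normaY{\phi_h}^2)\,{\rm d}t$ by a constant multiple of $\data^2$.

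The main obstacle I expect is the first step, where the DAE structure must be turned into a well-posed ODE. Because Proposition~\ref{prp:infsup} only controls $\|\phi_h\|$ up to the term $\normaY{\phi_h}$, the unique determination of $\phi_h$ at each time must combine the inf-sup with the positivity provided by $Y_h$. Once this elimination is in place, the energy arguments leading to \eqref{eq:stab4f-1}--\eqref{eq:stab4f-2} repeat those of the three-field scheme essentially verbatim, the two cancellations above being the key point that makes the multiplier disappear from the energy identity.
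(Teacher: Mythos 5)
Your treatment of the stability bounds is exactly the paper's intended argument: testing \eqref{eq:stab fem-4f} with $(\vv_h,\HH_h)=(\uu_h,\BB_h)$ and $\psi_h=\phi_h$, the two $b(\BB_h,\phi_h)$ contributions combine so that $Y_h(\phi_h,\phi_h)=\normaY{\phi_h}^2$ appears on the left, and the remaining chain repeats \eqref{eq:bound3f-1} from Proposition \ref{prp:well-3f} with Proposition \ref{prp:coef4} in place of Proposition \ref{prp:coe}; this is precisely what the paper means by ``analogous techniques'', so the derivation of \eqref{eq:stab4f-1}--\eqref{eq:stab4f-2} is fine.

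The shaky point is the well-posedness step. Your claim that feeding the induction equation and the constraint into Proposition \ref{prp:infsup} produces ``a uniquely solvable linear algebraic equation for $\phi_h$'', so that the DAE reduces to an ODE for $(\uu_h,\BB_h)$ by pointwise elimination, does not hold as stated. The form $Y_h$ is only positive semi-definite on $\Rd$: its kernel contains, for instance, all global polynomials of degree $\le k$ with zero mean, whose gradients have no interface jumps. Since the constraint equation contains no time derivative, the frozen-time linear system for $(\partial_t\uu_h,\partial_t\BB_h,\phi_h)$ given the current state has a singular block (mass matrix coupled to $-b$ in one row, and only $Y_h$ in the constraint row), so the $\ker Y_h$-component of $\phi_h$ is not fixed by any algebraic relation in the current state; like the pressure in the semi-discrete time-dependent Stokes system, it is an index-2 variable determined only by requiring the constraint to propagate in time (and one also needs the compatibility of the initial datum $\BB_{h,0}$ with the constraint). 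Proposition \ref{prp:infsup} does give what you use it for at the end, namely uniqueness of $\phi_h$ once the fields are known ($\normaY{\phi_1-\phi_2}=0$ from the constraint, $b(\HH_h,\phi_1-\phi_2)=0$ from the induction equation), and combining it with the positivity of $Y_h$ is the right ingredient; but turning existence into an ODE statement requires the more careful DAE/Galerkin reduction, which the paper does not spell out either: it delegates exactly this point to \cite[Theorem 1]{BF:2007}, invoking the Lipschitz continuity of $\astabf$ together with the inf-sup of Proposition \ref{prp:infsup}.
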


\begin{proof}
The existence of a unique solution $(\uu_h, \BB_h, \phi_h)$ to Cauchy problem \eqref{eq:stab fem-4f} can be derived 
using analogous arguments to that in \cite[Theorem 1]{BF:2007} and follows by the Lipschitz continuity of $\astabf$ and the inf-sup stability of Proposition \ref{prp:infsup}. 
The stability bounds can be proved using analogous techniques to that in the proof of Proposition \ref{prp:well-3f}. 

\end{proof}

\subsection{Error analysis}
\label{sub:err4}

We here derive the convergence estimates for the four-field problem.

Let $(\uu_h, \BB_h, \phi_h)$ be the solution of Problem \eqref{eq:stab fem-4f}, then for a.e. $t \in I$, we introduce the following shorthand notation 
(valid for all sufficiently regular vector fields ${\bf H}:\Omega\times [0,T] \rightarrow {\mathbb R}^3$)
\begin{equation}
\label{eq:normastab4}
\normamf{{\bf H}}(t) := \Vert {\bf H}(\cdot, t) \|_{{4\rm f}, \uu_h(\cdot, t)} \,.
\end{equation} 

We make the following assumption on the solution of the continuous problem.

\smallskip
\noindent
\textbf{(RA2-4f) Regularity assumptions on the exact solution (error analysis):} 
\begin{itemize}
\item[(i)] $\uu, \BB \in L^2(0, T; \WW^{k+1}_{\infty}(\Omega_h)) \cap  H^1(0, T; \HH^{k+1}(\Omega_h))$\,,
\item[(ii)] $\uu, \BB \in L^\infty(0, T; \WW^1_\infty(\Omega_h))$\,.
\end{itemize}
We also introduce the following useful quantity for the error analysis
\begin{equation}
\label{eq:quantities4f}
\lmf^2 :=  \max \{\nm\,,  h (\data^2 + 1) \}   \, ,
\end{equation}
which is associated to the convective or diffusive nature of the discrete problem.

%

\begin{proposition}[Interpolation error estimate for the magnetic field]
\label{prp:interpolationmf}
Let Assumption \cfan{(MA1)} hold.
Furthermore, if $k=1$ let also Assumption \cfan{(MA2)} hold. Then, under the regularity assumption  \cfan{(RA2-4f)}, referring to \eqref{eq:fquantities} and \eqref{eq:quantities4f} for a.e. $t \in I$ the following holds:
\[
 \normamf{\EEi}^2 \lesssim 
\lmf^2 \, \Vert \BB \Vert_{\WW^{k+1}_\infty(\Omega_h)}^2 \, h^{2k}  \,.
\]
\end{proposition}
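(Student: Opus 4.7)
The plan is to proceed in parallel with the computation for $\normacip{\eei}$ carried out in \eqref{eq:eicip}, but adapted to the magnetic interpolation error, to the gradient jump appearing in the $K_h$ stabilization and to the discrete velocity $\uu_h$ playing the role of the convective field. First, I would split
\[
\normamf{\EEi}^2 = \nm \Vert \bnabla \EEi \Vert^2 + \normaK{\EEi}^2\,,
\]
where the first term is immediately bounded by $\nm \, h^{2k} \vert \BB \vert_{k+1,\Omega_h}^2 \lesssim \nm \, h^{2k} \Vert \BB\Vert_{\WW^{k+1}_\infty(\Omega_h)}^2$ using \eqref{eq:err-int} (this contributes the $\nm$ portion of $\lmf^2$).

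The core of the proof is the estimate of $\normaK{\EEi}^2$. Starting from $\max\{\Vert \uu_h\Vert_{L^\infty(\omega_f)}^2,1\} \leq \Vert \uu_h\Vert_{L^\infty(\omega_f)}^2 + 1$, I would split the sum accordingly. The $+1$ contribution is
$\sum_{f \in \EdgesI} h_f^2 \Vert\jump{\bnabla \EEi}_f\Vert_f^2$, which I would bound by the trace-type estimate \eqref{eq:utile-jump1} (applied with $\mathbb{K}=I$, $\ww=\bnabla_h \EEi$ and $\alpha=2$), combined with the local approximation properties of $\PWWd$ (the $H^1$ and $H^2$ estimates on $\EEi$ available by Lemma \ref{lm:int-i} and the standard theory of quasi-interpolants on shape-regular meshes). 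This yields a bound of order $h^{2k+1}\vert \BB\vert_{k+1,\Omega_h}^2$, which contributes to the $h$ portion of $\lmf^2$. For the $\Vert \uu_h\Vert_{L^\infty(\omega_f)}^2$ contribution, I would pass from faces to elements using \textbf{(MP1)} and then use the inverse estimate of Lemma \ref{lm:inverse}, namely $\Vert \uu_h\Vert_{\LL^\infty(E)}^2 \lesssim h_E^{-3}\Vert \uu_h\Vert_E^2$, to trade the $L^\infty$ norm of the discrete velocity for its $L^2$ norm. In parallel, I would estimate $\Vert \bnabla \EEi\Vert_E^2 + h_E^2\Vert \bnabla^2 \EEi\Vert_E^2$ in $\LL^\infty(E)$-norm via H\"older's inequality (producing a factor $|E|\sim h_E^3$) and standard pointwise approximation properties, obtaining bounds proportional to $h_E^{2k+3}\Vert \BB\Vert_{\WW^{k+1}_\infty(E)}^2$. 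Assembling these ingredients and invoking the global stability bound \eqref{eq:stab4f-1} on $\uu_h$ produces a contribution of the form $h^{2k+1}\, \data^2\, \Vert \BB\Vert_{\WW^{k+1}_\infty(\Omega_h)}^2$, which also fits into the $h(\data^2+1)$ portion of $\lmf^2$.

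Collecting all pieces and recalling the definition \eqref{eq:quantities4f} of $\lmf^2 = \max\{\nm,h(\data^2+1)\}$, one obtains the claim. The main technical obstacle is the treatment of $\Vert \uu_h\Vert_{L^\infty}$, which cannot be controlled by any pointwise regularity assumption on the discrete solution; the trick is exactly the inverse estimate $+$ $L^2$ stability argument sketched above, mirroring the mechanism used in \eqref{eq:eiupw}--\eqref{eq:eicip} for the velocity interpolation estimate.
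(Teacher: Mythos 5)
Your proposal is correct and follows essentially the same route as the paper: split $\normamf{\EEi}^2$ into the $\nm\Vert\bnabla\EEi\Vert^2$ part (handled by \eqref{eq:err-int}) and the $\normaKc{\EEi}$ part, bound the gradient jumps by trace/approximation estimates as in \eqref{eq:utilejump2}, and treat the weight $\Vert\uu_h\Vert_{L^\infty}$ by the inverse estimate of Lemma \ref{lm:inverse} combined with H\"older (trading $\vert E\vert$ against $h_E^{-3}$) and the stability bound \eqref{eq:stab4f-1}, before absorbing everything into $\lmf^2$. The only cosmetic difference is that you split the $\max\{\cdot,1\}$ into two contributions and phrase the local bounds on $\EEi$ via $L^\infty$ approximation, whereas the paper keeps $\vert\BB\vert_{k+1,E}$ and applies H\"older afterwards; the ingredients and the resulting bound are identical.
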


\begin{proof}
Direct calculations yield (here $\omega_E$ denotes the union of the elements in $\Omega_h$ with an edge in common with $E$)
\begin{equation}
\label{eq:eiK}
\begin{aligned}
\normaK{\EEi}^2 = 
& \sum_{f \in \EdgesI} h_f^2 
\max \{\Vert \uu_h\Vert_{\LL^\infty(\omega_f)}^2 , 1 \}
\Vert \jump{\bnabla \EEi}_f \Vert_f^2 
\\
& \begin{aligned}
& \lesssim
\sum_{E \in \Omega_h} h_E^{2k+1}
(\Vert \uu_h\Vert_{\LL^\infty(\omega_E)}^2 + 1) \,
\vert \BB \vert_{k+1,E}^2 
& &\text{(similarly to \eqref{eq:utilejump2})}
\\
& \lesssim
h^{2k+1} \left(
\Vert \BB \Vert_{\WW^{k+1}_\infty(\Omega_h)}^2 \Vert \uu_h\Vert^2 +
 \vert \BB \vert_{k+1,\Omega_h}^2
\right)
& &\text{(Lm. \ref{lm:inverse} \& H\"{o}lder ineq.)}
\\
& \lesssim
h^{2k+1} 
\left(\Vert \BB \Vert_{\WW^{k+1}_\infty(\Omega_h)}^2 
\data^2 
+ \vert \BB \vert_{k+1,\Omega_h}^2
\right) \, .
& &\text{(by \eqref{eq:stab4f-1})}
\end{aligned}
\end{aligned}
\end{equation}
Whereas from \eqref{eq:err-int} we infer
$
\nm\Vert \bnabla \EEi\Vert^2 \lesssim
\nm  h^{2k} \vert \BB \vert_{k+1, \Omega_h}^2 \,.
$
The proof easily follows from the definition of $ \normamf{\cdot}$ in \eqref{eq:normastab4}.
\end{proof}

We now prove the following error estimation.
\begin{proposition}[Discretization error]
\label{prp:error equation 4}
Let Assumption \cfan{(MA1)} hold. Furthermore, if $k=1$ let also Assumption \cfan{(MA2)} hold.
Let $\ccoe$ be the uniform constant in Proposition \ref{prp:coef4}.
Then, under the consistency assumption \cfan{(RA1-4f)} and assuming that the parameter $\bdma$ (cf. \eqref{eq:forme_d}) is sufficiently large, referring to \eqref{eq:fquantities}, for a.e. $t \in I$ the following holds
\begin{equation}
\label{eq:error equation 4}
\frac{1}{2}\partial_t \Vert \eeh  \Vert^2 + 
\frac{1}{2}\partial_t \Vert \EEh \Vert^2 +
\ccoe \left(\normastab{\eeh}^2 +
\normamf{ \EEh}^2 \right) + \normaY{\phi_h}^2\leq 
\sum_{i=1}^4 T_i +  T_{5, 4\rm{f}} 
\end{equation}
where $T_i$ are defined in \eqref{eq:Ti-3f} and
\begin{equation}
\label{eq:Ti-4f}
T_{5, 4\rm{f}} := K_h(\uu_h; \EEi, \EEh) + b(\EEi, \phi_h) \,.
\end{equation}
\end{proposition}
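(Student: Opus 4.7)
The plan is to mirror the derivation of Proposition \ref{prp:error equation} for the three-field scheme, now accounting for the Lagrange multiplier and the extra stabilization $K_h$. Invoking the coercivity of $\astabf$ from Proposition \ref{prp:coef4},
\[
\ccoe\bigl(\normastab{\eeh}^2 + \normamf{\EEh}^2\bigr) \leq \astabf\bigl((\uu_h,\BB_h); (\eeh,\EEh), (\eeh,\EEh)\bigr),
\]
and adding $\normaY{\phi_h}^2 = Y_h(\phi_h,\phi_h)$ to both sides, it suffices to establish
\[
(\partial_t \eeh, \eeh) + (\partial_t \EEh, \EEh) + \astabf\bigl((\uu_h,\BB_h); (\eeh,\EEh), (\eeh,\EEh)\bigr) + Y_h(\phi_h,\phi_h) = \sum_{i=1}^4 T_i + T_{5, 4\rm{f}}.
\]

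The next step is to write $\eeh = \uu_h - \uui$, $\EEh = \BB_h - \BBi$, and to invoke the discrete problem \eqref{eq:stab fem-4f}. Testing the first and third equations with $(\eeh, \EEh)$ (the pressure coupling $b(\eeh,p_h)$ vanishes since $\eeh \in \ZZd$) yields
\[
(\partial_t \uu_h,\eeh) + (\partial_t \BB_h,\EEh) + \astabf((\uu_h,\BB_h); (\uu_h,\BB_h), (\eeh,\EEh)) = (\ff,\eeh) + (\GG,\EEh) + b(\EEh,\phi_h),
\]
while testing the fourth equation with $\phi_h$ gives $Y_h(\phi_h,\phi_h) = -b(\BB_h,\phi_h)$. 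Using $\BB_h = \BBi + \EEh$ together with the consistency $b(\BB,\phi_h) = 0$ from \eqref{eq:forme_con4f} (so $b(\BBi,\phi_h) = -b(\EEi,\phi_h)$), this becomes $Y_h(\phi_h,\phi_h) = b(\EEi,\phi_h) - b(\EEh,\phi_h)$. The crucial observation is that the two contributions $\pm b(\EEh,\phi_h)$ cancel exactly, leaving only $b(\EEi,\phi_h)$, which is the second summand of $T_{5,4\rm{f}}$.

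I then substitute $(\ff,\eeh) + (\GG,\EEh)$ using the continuous formulation \eqref{eq:variazionale} together with the consistency identities \eqref{eq:forme_con}, obtaining
\[
(\partial_t\uu,\eeh) + (\partial_t\BB,\EEh) + \ns\ash(\uu,\eeh) + \nm\am(\BB,\EEh) + c(\uu;\uu,\eeh) - d(\BB;\BB,\eeh) + d(\BB;\EEh,\uu).
\]
Combining the time derivatives with $-(\partial_t \uui,\eeh) - (\partial_t \BBi,\EEh)$ produces $(\partial_t \eei, \eeh) + (\partial_t \EEi, \EEh)$, and it then remains to expand $-\astabf((\uu_h,\BB_h); (\uui,\BBi), (\eeh,\EEh))$ term by term using $\uui = \uu - \eei$, $\BBi = \BB - \EEi$.

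For the diffusive contributions this gives $\ns \ash(\eei,\eeh) + \nm \am(\EEi,\EEh)$; the CIP term, using $J_h(\BB_h;\uu,\eeh) = 0$ from \eqref{eq:forme_con}, becomes $J_h(\BB_h;\eei,\eeh)$, together forming $T_1$. The trilinear coupling terms collapse into $T_2$, $T_3$, $T_4$ exactly as in the three-field case \eqref{eq:Ti-3f}. For the genuinely new term, the consistency $K_h(\uu_h;\BB,\EEh) = 0$ from \eqref{eq:forme_con4f} gives $-K_h(\uu_h;\BBi,\EEh) = K_h(\uu_h;\EEi,\EEh)$, the first summand of $T_{5,4\rm{f}}$. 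The step most prone to error is purely algebraic bookkeeping, in particular the $\pm b(\EEh,\phi_h)$ cancellation and the sign changes induced by $\uui = \uu - \eei$, $\BBi = \BB - \EEi$; once this is tracked carefully no further analytical estimates are required for this identity.
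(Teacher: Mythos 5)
Your proposal is correct and follows essentially the same route as the paper's proof: coercivity of $\astabf$ plus $Y_h(\phi_h,\phi_h)=\normaY{\phi_h}^2$, substitution of the discrete system \eqref{eq:stab fem-4f} (tested with $(\eeh,\EEh)$ and $\phi_h$) and of the continuous problem, and then the consistency identities \eqref{eq:forme_con}, \eqref{eq:forme_con4f} together with $\diver\BB=0$ giving $-b(\BBi,\phi_h)=b(\EEi,\phi_h)$. The only cosmetic difference is that you cancel the $\pm b(\EEh,\phi_h)$ contributions explicitly, whereas the paper keeps $b(\EEh,\phi_h)-b(\BB_h,\phi_h)=-b(\BBi,\phi_h)$; the algebra is identical.
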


\begin{proof}
Employing Proposition \ref{prp:coef4}, recalling definition of $\normaY{\cdot}$ in \eqref{eq:norme mag 4f}, and equations \eqref{eq:stab fem-4f} and \eqref{eq:variazionale}, we infer
\[
\begin{aligned}
&\frac{1}{2}\partial_t \Vert \eeh  \Vert^2 + 
\frac{1}{2}\partial_t \Vert \EEh \Vert^2 +
\ccoe \left(\normastab{\eeh}^2 +
\normamf{ \EEh}^2\right)  + \normaY{\phi_h}^2 \leq 
\\
&\leq
\left(\partial_t \eeh  \,, \eeh \right) + 
\left(\partial_t \EEh  \,, \EEh \right) +
\astabf((\uu_h, \BB_h); (\eeh, \EEh), (\eeh, \EEh)) 
+ Y_h(\phi_h, \phi_h)
\\
&=
(\ff, \eeh) + (\GG, \EEh) -
\left(\partial_t \uui  \,, \eeh \right) -
\left(\partial_t \BBi  \,, \EEh \right) +
\\
& \quad -
\astabf((\uu_h, \BB_h); (\uui, \BBi), (\eeh, \EEh))+
b(\EEh, \phi_h) - b(\BB_h, \phi_h) 
\\
&=
\left(\partial_t \eei  \,, \eeh \right) +
\left(\partial_t \EEi  \,, \EEh \right) +
\ns \as(\uu,  \eeh) + \nm \am(\BB, \EEh) + c(\uu; \uu, \eeh)  +
\\
& \quad  
-d(\BB; \BB,  \eeh) 
+d(\BB; \EEh, \uu)  -
\astabf((\uu_h, \BB_h); (\uui, \BBi), (\eeh, \EEh)) - b(\BBi, \phi_h) \,.
\end{aligned}
\]
The proof follows from \eqref{eq:forme_con} and  \eqref{eq:forme_con4f}, and observing that 
$\diver \BB =0$ implies
\[
\begin{aligned}
-b(\BBi, \phi_h)=
b(\EEi, \phi_h)\,.
\end{aligned}
\]
\end{proof}


In order to derive the convergence estimate, we make the following assumption on the parameter $\bdmY$ in \eqref{eq:Y}:
\begin{equation}
\label{eq:bdmY}
\bdmY \leq \frac{\ccoe}{32 \clemma}
\min_{f \in \EdgesI} \left\{1, \, \frac{\max\{\Vert \uu_h \Vert_{\LL^\infty(\omega_f)}^2, 1 \}}{\Vert \uu \Vert_{\LL^\infty(\omega_f)}^2} \right\}
\,,
\end{equation}
see also Remark \ref{rem:lite-farl}.

Since the bounds of the terms $T_1,T_2,T_3$ in \eqref{eq:error equation 4} derived in Section \ref{sub:err} are tailored also to the four-field scheme, 
we target directly the remaining ones.

\begin{lemma}[Estimate of $T_4$]
\label{lm:t4f}
Under the assumptions of Proposition \ref{prp:error equation 4}  and   the regularity assumption \cfan{(RA2-4f)},
and assuming that the parameter $\bdmY$ (cf. \eqref{eq:Y}) satisfies \eqref{eq:bdmY}, 
the following holds
\begin{equation}
\label{eq:T4f}
\begin{aligned}
T_4 &\leq 
C   \left(\gunoinf + h \gunoinf^2 + h \frac{\ccoe}{8}\right) \Vert \EEh \Vert^2
+ \frac{\ccoe}{4}  \normaK{\EEh}^2  +
\frac{1}{4}\normaY{\phi_h}^2 + 
\\
& \quad 
+ C \gunoinf^2 \vert \BB \vert^2_{k+1, \Omega_h} h^{2k+1} +
 \gunoinf (\vert \uu \vert^2_{k+1, \Omega_h} + \vert \BB \vert^2_{k+1, \Omega_h}) h^{2k+2} \,.
\end{aligned}
\end{equation}
\end{lemma}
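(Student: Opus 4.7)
The plan is to mirror the decomposition of $T_4$ used in the proof of Lemma \ref{lm:t4}, writing $T_4 = T_{4,1} + T_{4,2} + T_{4,3} + T_{4,4}$ as in \eqref{eq:T40}. The first two summands are estimated exactly as in \eqref{eq:T41}--\eqref{eq:T42}, producing a contribution of the form $C\gunoinf\bigl(\Vert \EEh\Vert^2 + h^{2k+2}(\vert \uu\vert_{k+1,\Omega_h}^2 + \vert \BB\vert_{k+1,\Omega_h}^2)\bigr)$, already compatible with \eqref{eq:T4f}.

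For $T_{4,3} = -(\EEi, (\bnabla_h \EEh)\uu)$, the crude estimate \eqref{eq:T43} is too loose to deliver the target $h^{2k+1}$ rate, but the newly added $K_h$ stabilization suffices if we argue in the spirit of the term $\alpha_4$ in \eqref{eq:T3alpha40}--\eqref{eq:T3alpha4}. Split $\uu = \Pi_0\uu + (I-\Pi_0)\uu$: the $(I-\Pi_0)\uu$ part is absorbed via Remark \ref{rm:support}, giving $C\gunoinf\Vert\EEh\Vert^2 + C\gunoinf h^{2k+2}\vert\BB\vert_{k+1,\Omega_h}^2$. For the principal part $(\EEi,\pp_{k-1})$ with $\pp_{k-1}:=(\bnabla_h \EEh)(\Pi_0\uu)\in[\Pk_{k-1}(\Omega_h)]^3$, the orthogonality \eqref{eq:int-orth} of $\PWWd$ lets us replace $\pp_{k-1}$ by $(I-\intcip)\pp_{k-1}$, and Lemma \ref{lm:cip} combined with \eqref{eq:err-int} reduces the bound to $Ch^{k+1/2}\vert\BB\vert_{k+1,\Omega_h}\bigl(\sum_{f\in\EdgesI}h_f^2\Vert\jump{\pp_{k-1}}_f\Vert_f^2\bigr)^{1/2}$. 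Decomposing once more $\jump{\pp_{k-1}}_f = \jump{(\bnabla_h \EEh)\uu}_f + \jump{(\bnabla_h \EEh)((\Pi_0-I)\uu)}_f$, the continuity of $\uu$ controls the first jump by $\gunoinf\Vert\jump{\bnabla_h \EEh}_f\Vert_f$ (hence by $\normaK{\EEh}$), while Lemmas \ref{lm:bramble} and \ref{lm:inverse} handle the second. Young's inequality then delivers $\tfrac{\ccoe}{8}\normaK{\EEh}^2 + \tfrac{C\gunoinf^2}{\ccoe}h^{2k+1}\vert\BB\vert_{k+1,\Omega_h}^2$.

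The genuinely new ingredient is $T_{4,4} = (\EEh\cdot\uu, \diver(\EEh-\EEi))$. Since $\diver\BB=0$ one has $\diver(\EEh-\EEi)=\diver\BB_h$, so $T_{4,4}=(\uu\cdot\EEh,\diver\BB_h)$. Choose $w_h:=\Pt(\uu\cdot\EEh)\in\Pkc_k(\Omega_h)$ and $\overline{w}_h:=w_h - |\Omega|^{-1}\int_\Omega w_h \in \Rd$; since $\BB_h\cdot\nn=0$ on $\partial\Omega$ implies $\int_\Omega \diver\BB_h=0$, the identity $T_{4,4}=(\uu\cdot\EEh - w_h,\diver\BB_h)+(\overline{w}_h,\diver\BB_h)$ holds, and the fourth equation of \eqref{eq:stab fem-4f} tested with $\overline{w}_h$ gives $(\overline{w}_h,\diver\BB_h) = -Y_h(\phi_h,\overline{w}_h)$. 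The residual $(\uu\cdot\EEh - w_h,\diver\BB_h)$ is bounded by Cauchy--Schwarz using \eqref{eq:Pt3} (contributing $\gunoinf\Vert\EEh\Vert$) together with the decomposition $\diver\BB_h=\diver\EEh - \diver\EEi$, an inverse estimate on $\Vert\diver\EEh\Vert_E$ and \eqref{eq:err-int} on $\Vert\diver\EEi\Vert$, yielding $C\gunoinf\Vert\EEh\Vert^2 + C\gunoinf h^{2k+2}\vert\BB\vert_{k+1,\Omega_h}^2$. The multiplier term is split by Young as $\tfrac{1}{4}\normaY{\phi_h}^2 + \normaY{\overline{w}_h}^2$, reducing everything to an estimate of $\normaY{w_h}$.

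The main obstacle is precisely this bound on $\normaY{w_h}$. The idea is to exploit that Lagrange interpolation gives $w_h=\Pt(\uu\cdot\EEh)=\Pt((\Pt\uu)\cdot\EEh)$ (both sides coincide at every Lagrange node), and then estimate $\jump{\nabla w_h}_f$ via the identity $w_h - (\Pt\uu)\cdot\EEh = \Pt((\Pt\uu)\cdot\EEh) - (\Pt\uu)\cdot\EEh$ together with the $L^\infty$ and $W^1_\infty$ stability of $\Pt$ in \eqref{eq:Pt2}, the commutator bound \eqref{eq:Pt4}, and an inverse trace inequality. This produces a principal contribution $\clemma\Vert\uu\Vert_{\LL^\infty(\omega_f)}\Vert\jump{\bnabla_h\EEh}_f\Vert_f$ plus a local remainder of order $h^{-1/2}\gunoinf\Vert\EEh\Vert_{\omega_f}$. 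The tight threshold \eqref{eq:bdmY} on $\bdmY$ (whose form is tailored precisely to this step, the appearance of $\clemma$ being exactly the stability constant of $\Pt$) then converts $\bdmY\sum_f h_f^2\clemma^2\Vert\uu\Vert_{\LL^\infty(\omega_f)}^2\Vert\jump{\bnabla_h\EEh}_f\Vert_f^2$ into $\tfrac{\ccoe}{8}\normaK{\EEh}^2$, while the remainder generates the $h\gunoinf^2\Vert\EEh\Vert^2$ contribution appearing in \eqref{eq:T4f}. Collecting all bounds gives the statement.
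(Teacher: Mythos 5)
Your proposal is correct and follows essentially the same route as the paper: the same splitting \eqref{eq:T40}, the same $\Pi_0$-splitting plus $\intcip$/Lemma \ref{lm:cip} argument absorbing into $\normaK{\EEh}$ for $T_{4,3}$, and for $T_{4,4}$ the same key mechanism of testing the fourth equation of \eqref{eq:stab fem-4f} with the mean-adjusted interpolant $\Pt(\uu\cdot\EEh)$, estimating the residual with \eqref{eq:Pt3}, and handling the multiplier term via the commutator bound \eqref{eq:Pt4}, the stability \eqref{eq:Pt2} and the threshold \eqref{eq:bdmY}. The only (cosmetic) difference is that you apply Young first and then bound $\normaY{\Pt(\uu\cdot\EEh)}$ directly via the nodal identity $\Pt(\uu\cdot\EEh)=\Pt((\Pt\uu)\cdot\EEh)$, whereas the paper splits the $Y_h$ pairing into the commutator part and the $(\Pt\uu)\cdot\EEh$ part before applying Young.
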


\begin{proof}
We employ the same splitting  of $T_4$ derived in \eqref{eq:T40}.
From \eqref{eq:T41} and \eqref{eq:T42} we obtain
\begin{equation}
\label{eq:T4-12-f}
T_{4,1} + T_{4,2} \leq
C \gunoinf \, \Vert \EEh \Vert^2 + 
C \gunoinf (\vert \uu\vert_{k+1, \Omega_h}^2 + \vert \BB\vert_{k+1, \Omega_h}^2) h^{2k+2} \,. 
\end{equation}
The term $T_{4,3}$ can be written as follows
\begin{equation}
\label{eq:T43f0}
\begin{aligned}
T_{4,3} & = 
- (\EEi, (\bnabla \EEh) \uu) = 
(\EEi, (\bnabla \EEh) (\Pzerok{0}\uu - \uu)) -
(\EEi, (\bnabla \EEh) \Pzerok{0}\uu) 
=: \alpha_1 + \alpha_2 \,.
\end{aligned}
\end{equation}
The term $\alpha_1$ can be bounded as follows:
\begin{equation}
\label{eq:T4alpha1}
\begin{aligned}
\alpha_1 &\leq 
\Vert  \bnabla \EEh (\uu - \Pzerok{0}\uu) \Vert  \, \Vert \EEi \Vert 
& \quad & \text{(Cau.-Sch. ineq.)}
\\
 &\leq 
C \Vert \EEh \Vert \, \Vert \uu \Vert_{\WW^1_\infty(\Omega_h)} \, 
 \Vert \EEi \Vert 
& \quad & \text{(by \eqref{eq:support})}
\\
 &\leq 
C \Vert \uu \Vert_{\WW^1_\infty(\Omega_h)} \, \Vert \EEh \Vert^2 + 
C h^{2k+2}\Vert \uu \Vert_{\WW^1_\infty(\Omega_h)} \, \vert \BB \vert_{k+1, \Omega_h}^2 \, .
& \quad & \text{(Young ineq. \& \eqref{eq:err-int})}
\end{aligned}
\end{equation}
Concerning $\alpha_2$, we set we set 
$-(\bnabla_h \EEh) (\Pzerok{0}\uu) := \pp_{k-1}  \in [\Pk_{k-1}(\Omega_h)]^3$, and we use the same calculations in  \eqref{eq:T3alpha40}--\eqref{eq:T3alpha4} (with $\theta=\ccoe/2$) obtaining  
\begin{equation}
\label{eq:T4alpha2}
\begin{aligned}
\alpha_2 &  \leq 
C \Vert \uu \Vert_{\WW^1_\infty(\Omega_h)}^2 h^{2k+1} \vert \BB\vert^2_{k+1, \Omega_h} +
\frac{\ccoe}{8}\normaK{\EEh}^2 + \frac{C \ccoe}{8} h \, \Vert  \EEh \Vert^2 \,.
\end{aligned}
\end{equation}
Collecting \eqref{eq:T4alpha1} and \eqref{eq:T4alpha2} in \eqref{eq:T43f0} we have
\begin{equation}
\label{eq:T4-3-f}
\begin{aligned}
T_{4,3} &\leq C\left(\gunoinf + \frac{\ccoe}{8} h\right)
\Vert \EEh \Vert^2 
+ \frac{\ccoe}{8}  \normaK{\EEh}^2 
+ C h^{2k+1} \gunoinf \vert \BB \vert_{k+1, \Omega_h}^2  
(\gunoinf + h) \,.
\end{aligned}
\end{equation}
We now estimate the term $T_{4,4}$. We preliminary observe the following:
let $p_k \in \Pkc_k(\Omega_h)$ and let $\widehat{p}$ be its mean value,
being $\BB_h \cdot \nn = 0$ on $\partial \Omega$ then from \eqref{eq:fem-4f}, we infer
\[
(\diver \BB_h, p_k) = 
b(\BB_h, p_k - \widehat{p}) = 
-Y_h(\phi_h, p_k - \widehat{p}) =  -Y_h(\phi_h, p_k) \,.
\]
Therefore, recalling the definition of $\Pt$ in Lemma \ref{lm:Pt} and that $\diver \BB=0$, we have 
\begin{equation}
\label{eq:T44f0}
\begin{aligned}
T_{4,4}  &= (\diver \BB_h, \, \EEh \cdot \uu) 
= 
(\diver \BB_h,\, (I - \Pt)(\EEh \cdot \uu)) +
(\diver \BB_h, \, \Pt(\EEh \cdot \uu))
\\
&= 
(\diver \BB_h,\, (I - \Pt)(\EEh \cdot \uu)) -
Y_h(\phi_h, \Pt(\EEh \cdot \uu))
\\
&= 
((I - \Pt)(\EEh \cdot \uu), \, \diver \BB_h) +
Y_h(\phi_h, (\Pt\uu) \cdot \EEh - \Pt(\EEh \cdot \uu)) -
Y_h(\phi_h, (\Pt\uu) \cdot \EEh)
\\
& =: \beta_1 + \beta_2 + \beta_3 \,. 
\end{aligned} 
\end{equation}
We estimate the terms $\beta_i$ in \eqref{eq:T44f0}. 
Being $\diver \BB =0$, the term $\beta_1$ can be bounded as follows
(using Lemmas \ref{lm:inverse} and \ref{lm:Pt}).
\begin{equation}
\label{eq:beta1}
\begin{aligned}
\beta_1 &\leq 
\sum_{E \in \Omega_h} \Vert (I - \Pt)(\EEh \cdot \uu) \Vert_E 
\left( \Vert \bnabla \EEh  \Vert_E + 
\Vert \bnabla \EEi \Vert_E \right)
\\
&\leq 
\biggl( \sum_{E \in \Omega_h} h_E^{-2}\Vert (I - \Pt)(\EEh \cdot \uu) \Vert_E^2 \biggr)^{1/2}
\biggl( 
\sum_{E \in \Omega_h} h_E^2\Vert \bnabla \EEh  \Vert_E^2 +
 \sum_{E \in \Omega_h} h_E^2
\Vert \bnabla \EEi \Vert_E^2 
 \biggr)^{1/2}
\\
& \leq
C \Vert  \uu \Vert_{\WW^1_\infty(\Omega_h)}  \,
\Vert \EEh \Vert 
( \Vert \EEh\Vert + h^{k+1}|\BB|_{k+1, \Omega_h})
\\
& \leq
C \Vert  \uu \Vert_{\WW^1_\infty(\Omega_h)}  \,
\Vert \EEh \Vert^2 +  
C h^{2k+2} \,\Vert  \uu \Vert_{\WW^1_\infty(\Omega_h)}  \,
|\BB|_{k+1, \Omega_h}^2 \,.
\end{aligned}
\end{equation}
For the term $\beta_2$ we have
\begin{equation}
\label{eq:beta2}
\begin{aligned}
\beta_2 & \leq 
\frac{1}{8} \normaY{\phi_h}^2 + 
C  \sum_{f \in \EdgesI} h_f^2 \bdmY
\Vert \jump{\nabla ((\Pt\uu) \cdot \EEh - \Pt(\EEh \cdot \uu))}_f \Vert_f^2
\\
& \leq 
\frac{1}{8} \normaY{\phi_h}^2 + 
C  \sum_{E \in \Omega_h} h_E 
\Vert \nabla ((\Pt\uu) \cdot \EEh - \Pt(\EEh \cdot \uu)) \Vert_E^2
& \quad & \text{(by \eqref{eq:utile-jump})} 
\\
& \leq 
\frac{1}{8} \normaY{\phi_h}^2 + 
C  \sum_{E \in \Omega_h} h_E^{-1} 
\Vert (\Pt\uu) \cdot \EEh - \Pt(\EEh \cdot \uu) \Vert_E^2
& \quad & \text{(Lm. \ref{lm:inverse})} 
\\
& \leq 
\frac{1}{8} \normaY{\phi_h}^2 + 
C  h \Vert  \uu \Vert_{\WW^1_\infty(\Omega_h)}^2  \,
\Vert \EEh \Vert^2 \, .
& \quad & \text{(Lm. \ref{lm:Pt})} 
\end{aligned}
\end{equation}
Finally for the term $\beta_3$ we have
\begin{equation}
\label{eq:beta3}
\begin{aligned}
\beta_3 & \leq 
\frac{1}{8} \normaY{\phi_h}^2 + 
2 \bdmY\sum_{f \in \EdgesI} h_f^2 
\Vert \jump{\nabla ((\Pt \uu) \cdot \EEh)}_f \Vert_f^2
\\
& \leq 
\frac{1}{8} \normaY{\phi_h}^2 + 
4\bdmY\sum_{f \in \EdgesI} h_f^2 \left(
\Vert \jump{\bnabla (\Pt\uu)}_f |\EEh| \Vert_f^2 + 
\Vert \jump{\bnabla \EEh}_f |\Pt \uu| \Vert_f^2
\right)
\\
& \leq 
\frac{1}{8} \normaY{\phi_h}^2 +
4 \bdmY\sum_{f \in \EdgesI} h_f^2 \left( 
\Vert\bnabla (\Pt\uu)\Vert_{\LL^\infty(\omega_f)}^2
\Vert \EEh \Vert_f^2 +
\Vert \Pt \uu\Vert_{\LL^\infty(\omega_f)}^2
\Vert \jump{\bnabla \EEh}_f \Vert_f^2 \right)
\\
& \leq 
\frac{1}{8} \normaY{\phi_h}^2 + 
4 \clemma \bdmY h
\Vert \uu\Vert_{\WW^1_\infty(\Omega_h)}^2
\Vert \EEh \Vert^2 +
4 \clemma \bdmY
\sum_{f \in \EdgesI} h_f^2
 \Vert \uu\Vert_{\LL^\infty(\omega_f)}^2 
\Vert \jump{\bnabla \EEh}_f \Vert_f^2
\\
& \leq 
\frac{1}{8} \normaY{\phi_h}^2 + 
C h
\Vert \uu\Vert_{\WW^1_\infty(\Omega_h)}^2
\Vert \EEh \Vert^2 +
\frac{\ccoe}{8} \normaK{ \EEh}^2
\end{aligned}
\end{equation}
where in the penultimate inequality we used  \eqref{eq:Pt2} and \eqref{eq:utile-jump}, whereas in the last inequality we employed  \eqref{eq:bdmY}.

Therefore from \eqref{eq:T44f0}--\eqref{eq:beta3} we infer
\begin{equation}
\label{eq:T44fF}
T_{4,4} \leq 
C \gunoinf (1 + h \gunoinf)\Vert \EEh \Vert^2 + 
\frac{1}{4}\normaY{\phi_h}^2 + 
\frac{\ccoe}{8} \normaK{ \EEh}^2 + 
C \gunoinf \vert \uu \vert_{k+1, \Omega_h}^2 \, h^{2k+2}  \,.
\end{equation}
The proof now follows from \eqref{eq:T4-12-f}, \eqref{eq:T4-3-f}, \eqref{eq:T44fF}.
\end{proof}

\begin{lemma}[Estimate of $T_{5, 4\rm{f}}$]
\label{lm:t5f}
Under the assumptions of Proposition \ref{prp:error equation}  and   the regularity assumption \cfan{(RA2-4f)}, the following holds
\begin{equation}
\label{eq:T5f}
T_{5, 4\rm{f}} \leq 
\frac{\ccoe}{4} \normaK{\EEh}^2 +
\frac{1}{4} \normaY{\phi_h}^2  + 
C h (1 + \gunoinf^2 +\data^2) \Vert \BB\Vert_{\WW^{k+1}_\infty(\Omega_h)}^2 h^{2k} \,.
\end{equation}
\end{lemma}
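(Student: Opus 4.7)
The plan is to bound the two addends in $T_{5,4\rm f} = K_h(\uu_h;\EEi,\EEh) + b(\EEi,\phi_h)$ separately, absorbing the $\EEh$- and $\phi_h$-parts into the left-hand side of the discretization error equation and leaving only interpolation-type terms on the right-hand side.

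For the first addend, $K_h$ is a bilinear form in its second and third arguments, so by the Cauchy--Schwarz and Young inequalities
\[
K_h(\uu_h;\EEi,\EEh) \leq \normaK{\EEi}\,\normaK{\EEh}
\leq \frac{\ccoe}{4}\normaK{\EEh}^2 + \frac{C}{\ccoe}\normaK{\EEi}^2,
\]
and $\normaK{\EEi}^2$ has already been controlled inside the proof of Proposition \ref{prp:interpolationmf} (display \eqref{eq:eiK}) by
$C h^{2k+1}\bigl(\Vert \BB \Vert_{\WW^{k+1}_\infty(\Omega_h)}^2 \data^2 + \vert \BB \vert_{k+1,\Omega_h}^2\bigr)$, which matches the target bound after factoring out $h$.

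For the second addend, the key is an orthogonality-plus-CIP argument analogous to the one used in the proof of Proposition \ref{prp:infsup}. Since $\phi_h \in \Rd \subset H^1(\Omega)$ and $\EEi \cdot \nn = 0$ on $\partial\Omega$ (because both $\BB$ and $\BBi$ belong to $\Hunoenne$), integration by parts yields $b(\EEi,\phi_h) = -(\EEi,\nabla\phi_h)$. As $\nabla\phi_h \in [\Pk_{k-1}(\Omega_h)]^3$ and $\intcip\nabla\phi_h \in [\mathbb{O}_{k-1}(\Omega_h)]^3$, the orthogonality property \eqref{eq:int-orth} of $\PWWd$ gives
\[
b(\EEi,\phi_h) = -\bigl(\EEi,\,(I-\intcip)\nabla\phi_h\bigr).
\]
Splitting the $L^2$ inner product with the weights $h_E^{-1/2}$ and $h_E^{1/2}$, Cauchy--Schwarz combined with the interpolation estimate \eqref{eq:err-int} (for $\alpha=1$) and Lemma \ref{lm:cip} applied to $\nabla\phi_h$ produce
\[
b(\EEi,\phi_h) \leq C\,h^{k+1/2}\,\vert \BB\vert_{k+1,\Omega_h}\,
\Bigl(\sum_{f \in \EdgesI} h_f^2\,\Vert\jump{\nabla\phi_h}_f\Vert_f^2\Bigr)^{1/2}
\leq C\,h^{k+1/2}\,\vert \BB\vert_{k+1,\Omega_h}\,\normaY{\phi_h},
\]
where the last inequality absorbs the fixed constant $\bdmY$. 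A final Young's inequality extracts $\tfrac{1}{4}\normaY{\phi_h}^2$ and leaves a remainder of order $h^{2k+1}\vert\BB\vert_{k+1,\Omega_h}^2$, again fitting inside the stated bound.

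No step is genuinely hard: the verification of orthogonality (checking $\nabla\phi_h \in \Pk_{k-1}(\Omega_h)^3$ so that Lemma \ref{lm:cip} applies, and that $\EEi \perp [\mathbb{O}_{k-1}(\Omega_h)]^3$ by Lemma \ref{lm:int-i}(i)) is the only subtlety; the rest is a direct application of Cauchy--Schwarz, Young, and the interpolation bounds already assembled in Sections \ref{sec:notations}--\ref{sub:spaces}. Combining the two pieces yields \eqref{eq:T5f}.
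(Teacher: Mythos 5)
Your proposal is correct and follows essentially the same route as the paper: Cauchy--Schwarz and Young on $K_h(\uu_h;\EEi,\EEh)$ combined with the bound \eqref{eq:eiK}, and integration by parts plus the orthogonality \eqref{eq:int-orth}, the weighted Cauchy--Schwarz with Lemma \ref{lm:cip} and \eqref{eq:err-int}, and a final Young inequality for $b(\EEi,\phi_h)$. The only cosmetic difference is how the parameter $\bdmY$ is absorbed into the constant (the paper hides $\bdmY^{-1}$ behind the factor $\gunoinf^2$), which does not affect the validity of the argument.
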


\begin{proof}
Recalling the definition of norm $\normaK{\cdot}$, employing the Cauchy-Schwarz inequality, the Young inequality, and bound \eqref{eq:eiK}, we have
\[
\begin{aligned}
K_h(\uu_h; \EEi, \EEh) &\leq 
\normaK{\EEi} \, \normaK{\EEh}
\\
&\leq 
\frac{\ccoe}{4} \normaK{\EEh}^2 + 
C \, h^{2k+1} 
(\Vert \BB\Vert_{\WW^{k+1}_\infty(\Omega_h)}^2 \data^2 + \vert \BB\vert_{\HH^{k+1}(\Omega_h)}^2) \,.
\end{aligned}
\]
The second add in $T_{5, 4\rm{f}}$ can be bounded as follows:
\[
\begin{aligned}
& b(\EEi, \phi_h)  = 
-(\EEi, \nabla \phi_h) = 
-(\EEi, (I - \intcip)\nabla \phi_h)
&\quad
& \text{(int. by parts \& \eqref{eq:int-orth})}
\\
& \leq 
\biggl(\sum_{E \in \Omega_h} h_E^{-1} \Vert \EEi \Vert_E^2 \biggr)^{1/2}
\biggl(\sum_{E \in \Omega_h} h_E \Vert (I - \intcip)\nabla \phi_h
 \Vert_E^2 \biggr)^{1/2}
 &\quad
& \text{(Cau.-Sch- ineq.)}
\\
& \leq
C \bdmY^{-1} h^{2k+1} \vert \BB \vert_{k+1, \Omega_h}^2 + 
\frac{1}{4} \bdmY\sum_{f \in \EdgesI} \Vert \jump{\nabla \phi_h}_h\Vert_f^2
 &\quad
& \text{(\eqref{eq:err-int} \& Young ineq.)}
\\
& \leq
C  h^{2k+1} \gunoinf^2
\vert \BB \vert_{k+1, \Omega_h}^2 + 
\frac{1}{4}\normaY{\phi_h}^2 \,.
\end{aligned}
\]

\end{proof}

\begin{proposition}[error estimate]
\label{prp:convf}
Let Assumption \cfan{(MA1)} hold. Furthermore, if $k=1$ let also Assumption \cfan{(MA2)} hold.
Then, under the consistency assumption \cfan{(RA1-4f)} and the regularity assumption \cfan{(RA2-4f)}
and assuming that the parameter $\bdma$ (cf. \eqref{eq:forme_d}) is sufficiently large, the parameter $\bdmY$ (cf. \eqref{eq:Y}) satisfies \eqref{eq:bdmY}, and $h \lesssim 1$,
referring to \eqref{eq:quantities} and to \eqref{eq:quantities4f},  the following holds
\begin{equation}
\label{eq:conv-4f}
\begin{aligned}
&\Vert (\uu - \uu_h)(\cdot, T) \Vert^2 + \Vert (\BB_h - \BB_h)(\cdot, T)\Vert^2 +
\\
& +
\int_0^T \normastab{(\uu - \uu_h)(t)}^2 \, {\rm d}t +
\int_0^T \normamf{(\BB - \BB_h)(\cdot, t)}^2 \, {\rm d}t +
\int_0^T \normaY{\phi_h(\cdot, t)}^2 \, {\rm d}t
\lesssim 
\\
&(\ls^2 \Vert \uu \Vert_{L^2(0,T; \WW^{k+1}_\infty(\Omega_h))}^2 + 
\lmf^2 \Vert \BB \Vert_{L^2(0,T; \WW^{k+1}(\Omega_h))}^2) h^{2k} +
\\
&+
( \Vert \uu \Vert_{H^1(0,T; \HH^{k+1}_\infty(\Omega_h))}^2 + 
 \Vert \BB \Vert_{H^1(0,T; \HH^{k+1}(\Omega_h))}^2) h^{2k+2}
 \,,
\end{aligned}
\end{equation}
where the hidden constant depends also on $\gunoinf$.
\end{proposition}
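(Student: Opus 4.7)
The plan is to mimic the argument used for the three-field estimate of Proposition \ref{prp:conv}, incorporating the additional ingredients tailored to the four-field scheme (the stabilization $K_h$ on the magnetic jumps and the Lagrange multiplier with its stabilization $Y_h$). First I would start from the energy inequality in Proposition \ref{prp:error equation 4}, which already separates the discretization error into the terms $T_1,\dots,T_4$ (inherited from the three-field analysis) and $T_{5,4\mathrm{f}}$ (new, coming from $K_h$ and from the multiplier). The crucial observation is that the left-hand side of that inequality now contains both $\ccoe\,\normamf{\EEh}^2$ (which includes the $\normaK{\cdot}$ contribution) and $\normaY{\phi_h}^2$, so these stronger norms are available for absorption.

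Next I would substitute the bounds from Lemmas \ref{lm:t1}, \ref{lm:t2}, \ref{lm:t3}, which hold verbatim in this context, together with the four-field specific Lemmas \ref{lm:t4f} and \ref{lm:t5f}. Setting the free parameter $\theta$ in Lemmas \ref{lm:t1}--\ref{lm:t3} equal to $\ccoe/2$, the contributions $\ns\normaunoh{\eeh}^2$, $\normaupw{\eeh}^2$, $\normacip{\eeh}^2$ and $\nm\Vert\bnabla \EEh\Vert^2$ appearing on the right-hand side get absorbed into the $\ccoe\normastab{\eeh}^2 + \ccoe\nm\Vert\bnabla\EEh\Vert^2$ part of the left-hand side. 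Similarly, the $(\ccoe/4)\normaK{\EEh}^2$ pieces produced by Lemmas \ref{lm:t4f} and \ref{lm:t5f} get absorbed into the $\normaK{\EEh}^2$ summand hidden in $\normamf{\EEh}^2$, and the $(1/4)\normaY{\phi_h}^2$ pieces are absorbed into $\normaY{\phi_h}^2$. The $O(h)\Vert\EEh\Vert^2$ pre-factors in those lemmas cause no trouble under the mild requirement $h\lesssim 1$.

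After these absorptions one is left with a Gronwall-type differential inequality of the form
\[
\partial_t(\Vert\eeh\Vert^2 + \Vert\EEh\Vert^2) + C\bigl(\normastab{\eeh}^2 + \normamf{\EEh}^2 + \normaY{\phi_h}^2\bigr)
\lesssim \gunoinf\bigl(\Vert\eeh\Vert^2 + \Vert\EEh\Vert^2\bigr) + \mathcal{R}(t),
\]
where $\mathcal{R}(t)$ collects the explicit interpolation remainders $\ls^2\Vert\uu\Vert^2_{\WW^{k+1}_\infty(\Omega_h)}h^{2k}$, $\lmf^2\vert\BB\vert^2_{k+1,\Omega_h}h^{2k}$ and the $h^{2k+2}$ contributions involving $\uu_t,\BB_t$. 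Since $\eeh(\cdot,0)=\EEh(\cdot,0)=0$ by \eqref{eq:ini cond-d}, integrating over $(0,T)$ and applying the classical Gronwall lemma yields the desired bound on $\Vert\eeh(\cdot,T)\Vert^2 + \Vert\EEh(\cdot,T)\Vert^2$ plus the time-integrals of the stabilization norms.

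Finally, I would recover the stated estimate on $\uu-\uu_h$ and $\BB-\BB_h$ by the triangle inequality, writing $\uu-\uu_h = \eei - \eeh$ and $\BB-\BB_h = \EEi - \EEh$, and bounding the interpolation parts using Proposition \ref{prp:interpolation} together with Lemma \ref{lm:int-v}\,(iii) for the velocity, and Proposition \ref{prp:interpolationmf} together with Lemma \ref{lm:int-i}\,(ii) for the magnetic field. The main obstacle in the argument is the bookkeeping required by Lemma \ref{lm:t4f}: one must verify that the constraint \eqref{eq:bdmY} on $\bdmY$ is compatible with the absorption step for $\beta_3$ in that lemma, and simultaneously small enough that the interpolation bound for the multiplier term $b(\EEi,\phi_h)$ in Lemma \ref{lm:t5f} remains of optimal order $h^{2k}$. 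Once the parameters $\theta$ and $\bdmY$ are tuned consistently, all remaining manipulations are routine Gronwall and triangle-inequality steps, mirroring the three-field case.
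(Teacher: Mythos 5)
Your proposal is correct and follows essentially the same route as the paper: the energy inequality of Proposition \ref{prp:error equation 4}, the bounds of Lemmas \ref{lm:t1}--\ref{lm:t3} with $\theta=\ccoe/2$ together with Lemmas \ref{lm:t4f} and \ref{lm:t5f}, absorption of the stabilization terms, Gronwall with the zero initial error from \eqref{eq:ini cond-d}, and a final triangle inequality using Propositions \ref{prp:interpolation} and \ref{prp:interpolationmf}. No gaps.
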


\begin{proof}
We start by noticing that from \eqref{eq:int-v}, \eqref{eq:err-int}, Proposition \ref{prp:interpolation} and Proposition \ref{prp:interpolationmf} we infer
\begin{equation}
\label{eq:conv-4f-ei}
\begin{aligned}
\Vert \eei(\cdot, T) \Vert^2 &+ \Vert \EEi(\cdot, T)\Vert^2 +
\int_0^T \normastab{\eei(\cdot, t)}^2 \, {\rm d}t  +
\int_0^T \normamf{\EEi(\cdot, t)}^2 \, {\rm d}t  
\\
&\lesssim 
h^{2k+2} \left(\vert \uu(\cdot, T) \vert_{k+1, \Omega_h}^2 + 
\vert \BB(\cdot, T)\vert_{k+1, \Omega_h}^2\right) +
\\
&+
(\ls^2 \Vert \uu \Vert_{L^2(0,T; \WW^{k+1}_\infty(\Omega_h))}^2 + 
\lmf^2 \Vert \BB \Vert_{L^2(0,T; \WW^{k+1}_\infty(\Omega_h))}^2) h^{2k} 
 \,.
\end{aligned}
\end{equation}
From Proposition \ref{prp:error equation 4}, using Lemmas \ref{lm:t1}--\ref{lm:t3} plus Lemmas \ref{lm:t4f} and \ref{lm:t5f}, further choosing $\theta = \frac{\ccoe}{2}$ in \eqref{eq:T1}, \eqref{eq:T2} and \eqref{eq:T3}, we obtain 
\[
\begin{aligned}
\partial_t \Vert \eeh  \Vert^2 &+ 
\partial_t \Vert \EEh \Vert^2 +
\left(\normastab{\eeh}^2 +
\normamt{\BB_h}^2 \right) 
\lesssim \\
& \left(\gunoinf + \gunoinf^2 h + \frac{\ccoe}{8}h\right)
 \left( \Vert \eeh \Vert^2 + \Vert \EEh \Vert^2 \right) + \\
& +
 \, (\ls^2 \, \Vert \uu \Vert_{\WW^{k+1}_\infty(\Omega_h)}^2 + \lmf^2 \, \Vert \BB \Vert_{\WW^{k+1}_\infty(\Omega_h)}^2) h^{2k} + 
\gunoinf^2 |\BB|_{k+1, \Omega_h}^2 \, h^{2k+1}+
\\
& +
\gunoinf  (
|\uu|_{k+1,\Omega_h}^2 + |\BB|_{k+1,\Omega_h}^2 + 
|\uu_t|_{k+1,\Omega_h}^2 + |\BB_t|_{k+1,\Omega_h}^2) h^{2k+2}
\end{aligned}
\]
with initial condition 
$\eeh(\cdot, 0) = \EEh(\cdot, 0) = 0$ (cf. \eqref{eq:ini cond-d}).
Therefore, employing the Gronwall lemma we finally have
\begin{equation}
\label{eq:conv-4f-eh}
\begin{aligned}
\Vert \eeh(\cdot, T) \Vert^2 &+ \Vert \EEh(\cdot, T)\Vert^2 +
\int_0^T \normastab{\eeh(\cdot, t)}^2 \, {\rm d}t  +
\int_0^T \normamt{\EEh(\cdot, t)}^2 \, {\rm d}t 
\\
&\lesssim 
(\ls^2 \Vert \uu \Vert_{L^2(0,T; \WW^{k+1}_\infty(\Omega_h))}^2 + 
\lmf^2 \Vert \BB \Vert_{L^2(0,T; \WW^{k+1}_\infty(\Omega_h))}^2) h^{2k} +
\\
&+
( \Vert \uu \Vert_{H^1(0,T; \HH^{k+1}(\Omega_h))}^2 + 
 \Vert \BB \Vert_{H^1(0,T; \HH^{k+1}(\Omega_h))}^2) h^{2k+2}
 \,,
\end{aligned}
\end{equation}
where the hidden constant depends also on $\gunoinf$.
The proof now follows by the triangular inequality.

\end{proof}
\begin{remark}
As already observed, similarly to Remark \ref{rem:4F-teaser}, the above result expresses the pressure robustness and the quasi-robustness of the four-field method. Furthermore, it is immediate to check that for small values of $\ns,\nm$ (compared to $h$, c.f. \eqref{eq:quantities}, \eqref{eq:quantities4f}) it also guarantees a quicker $O(h^{k+1/2})$ error reduction rate. \end{remark}

\begin{remark}\label{rem:lite-farl}
Note that, asymptotically for small $h$, the right hand side in condition \eqref{eq:bdmY} for the parameter $\bdmY$ is expected to behave as $\frac{\ccoe}{32 \clemma}$ (where we recall the constants $\ccoe$ and $\clemma$ appear in Proposition \ref{prp:coef4} and Lemma \ref{lm:Pt}, respectively).
Therefore, although such condition in principle depends on the velocity $\uu$, in practice it is expected to be fairly independent of the exact solution.
Note that a requirement of type \eqref{eq:bdmY} is already present in the literature of nonlinear fluidodynamics, for instance in \cite{BF:2007}.
\end{remark}

\section{A basic investigation for the pressure variable}
\label{sec:pressione}

Deriving optimal error estimates for the pressure variable (which is not a trivial consequence of the error bounds for velocity and magnetic fields) is outside the scopes of the present work. We here limit ourselves in providing a basic result and some comments in a companion remark. 

\begin{proposition}[pressure error estimate]
\label{prp:pressione}
Let $(\uu, p, \BB)$ be the solution of Problem \eqref{eq:primale} and let $(\uu_h, p_h, \BB_h)$ be the solution of Problem \eqref{eq:fem-3f} (resp. \eqref{eq:fem-4f}).
Then, under the assumptions of Proposition \ref{prp:conv} (resp. Proposition \ref{prp:convf}) and assuming 
$p \in L^2(0, T; H^k(\Omega_h))$
the following holds
\begin{equation}
\label{eq:conv-pressione}
\begin{aligned}
\int_0^T &\Vert (p - p_h)(t) \Vert^2 \, {\rm d}t 
\lesssim 
\Vert p \Vert_{L^2(0,T; H^{k}(\Omega_h))}^2  h^{2k} + 
\left \Vert \partial_t (\uu - \uu_h) \right\Vert_{L^2(0, T; {\VVd}')}^2  +
\\ 
&
+
(\Vert \uu \Vert_{L^2(0,T; \HH^{k+1}(\Omega_h))}^2 +
\Vert \BB \Vert_{L^2(0,T; \HH^{k+1}(\Omega_h))}^2) 
(\ns^2 + \gzhinf^2 h^2) h^{2k}+
\\
& + 
(\gunoinf^2 + \gunohinf^2 + \ns + \gzhinf^2 h) \times
\\
& \times \left(
\Vert \uu - \uu_h \Vert_{L^\infty(0, T; \LL^2(\Omega))}^2 + \Vert \BB - \BB_h \Vert_{L^\infty(0, T; \LL^2(\Omega))}^2 + 
\int_0^T \normastab{\uu -\uu_h}^2 \, {\rm d}t \right)
\end{aligned}
\end{equation}
where $\Gamma$ was defined in \eqref{eq:gunoinf} and
\[
\begin{aligned}
\gzhinf &:= 1 + \Vert \uu_h \Vert_{L^\infty(0, T; \LL^\infty(\Omega_h))} 
+ \Vert \BB_h \Vert_{L^\infty(0, T; \LL^\infty(\Omega_h))} 
\\
\gunohinf &:= \Vert \uu_h \Vert_{L^2(0, T; \LL^\infty(\Omega_h))} + \Vert \BB_h \Vert_{L^2(0, T; \WW^1_\infty(\Omega_h))}  \,.
\end{aligned}
\]
\end{proposition}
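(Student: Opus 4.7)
The plan is to follow the classical inf-sup route. First, using the Fortin-type inf-sup stability of the pair $(\VVd, \Qd)$ built on the BDM element, there exists a uniform constant $\beta>0$ such that for every $q_h \in \Qd$ one has $\beta \|q_h\| \leq \sup_{\vv_h \in \VVd} b(\vv_h, q_h)/\normaunoh{\vv_h}$. Writing $p - p_h = (p - \Pzerok{k-1} p) + (\Pzerok{k-1} p - p_h)$, the first summand is controlled by a standard Bramble--Hilbert estimate yielding an $h^k$ contribution proportional to $\|p\|_{H^k(\Omega_h)}$. For the second summand, I apply the inf-sup inequality and exploit the crucial property that $\diver \vv_h \in \Pk_{k-1}(\Omega_h)$ for $\vv_h \in \VVd$, which implies $b(\vv_h, p - \Pzerok{k-1} p) = 0$, so that the relevant quantity to bound reduces to $b(\vv_h, p - p_h)$.

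Next, I derive an expression for $b(\vv_h, p - p_h)$ by subtracting the discrete momentum equation (first row of \eqref{eq:fem-3f} or \eqref{eq:fem-4f}) from the corresponding continuous one tested against $\vv_h \in \VVd$. This yields
\[
b(\vv_h, p - p_h) = -(\partial_t(\uu - \uu_h), \vv_h) - \ns[\as(\uu, \vv_h) - \ash(\uu_h, \vv_h)] - [c(\uu;\uu,\vv_h) - c_h(\uu_h;\uu_h,\vv_h)] + [d(\BB;\BB,\vv_h) - d(\BB_h;\BB_h,\vv_h)] + J_h(\BB_h;\uu_h,\vv_h).
\]
Each term on the right must now be bounded by $C \normaunoh{\vv_h}$ multiplied by the appropriate error quantities. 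The time-derivative term directly produces the $\|\partial_t(\uu - \uu_h)\|_{\VVd'}$ contribution. The viscous part splits as $\ns \ash(\uu - \uui, \vv_h) + \ns \ash(\uui - \uu_h, \vv_h)$ (using consistency on the smooth part), which is controlled by $\ns h^k |\uu|_{k+1,\Omega_h}$ plus $\ns \normaunoh{\uu - \uu_h}$ (giving the $\ns^2 h^{2k}$ and $\ns \cdot \normastab{\uu-\uu_h}^2$ parts of the bound).

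The convective and Lorentz-force differences are the main obstacle, since they are trilinear. I plan to split them in the spirit of the terms $T_2, T_3, T_4$ already estimated in Lemmas \ref{lm:t2}--\ref{lm:t4}, writing for instance $c(\uu;\uu,\vv_h) - c_h(\uu_h;\uu_h,\vv_h) = c(\uu - \uu_h; \uu, \vv_h) + c_h(\uu_h; \uu - \uu_h, \vv_h) + (\text{jump contributions})$, and similarly for the Lorentz term. Using $L^\infty$ bounds on $\uu,\BB$ (producing $\gunoinf$) and on $\uu_h,\BB_h$ (producing $\gzhinf$ or $\gunohinf$) combined with inverse estimates to trade the $\boldsymbol{L}^\infty$ norm of $\vv_h$ for $h^{-3/2}\|\vv_h\|$ when needed, one obtains bounds of the shape $(\gunoinf + \gunohinf + \gzhinf h^{1/2})(\|\uu-\uu_h\| + \|\BB-\BB_h\| + \normastab{\uu-\uu_h})\normaunoh{\vv_h}$ plus interpolation residuals of order $\gzhinf h^{k+1}$. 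The stabilization term $J_h(\BB_h;\uu_h,\vv_h)$ is treated by Cauchy--Schwarz in the CIP seminorm and using $\normacip{\uu_h} \leq \normacip{\uu - \uu_h}$ (by consistency $\normacip{\uu}=0$) and an inverse/trace estimate to bound $\normacip{\vv_h} \lesssim \gzhinf \normaunoh{\vv_h}$.

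The final step assembles all pointwise-in-time bounds, squares them, integrates over $(0,T)$ and applies the triangle inequality for the pressure splitting. The main technical delicacy, besides the trilinear splittings above, is making sure that the inverse/trace estimates on $\vv_h$ do not spoil the $h^{2k}$ rate; this forces the factor $\gzhinf^2 h^2$ in front of the interpolation $h^{2k}$ terms and the factor $\gzhinf^2 h$ in front of the stabilization norm contribution, matching precisely the structure of the right-hand side of \eqref{eq:conv-pressione}. In the four-field case the additional term $b(\HH_h, \phi_h)$ does not affect this proof since the pressure equation itself is unchanged.
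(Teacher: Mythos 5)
Your plan follows essentially the same route as the paper's proof: BDM inf-sup (with a test function bounded in the $\normaunoh{\cdot}$ norm), the orthogonality $b(\vv_h, p-\Pzerok{k-1}p)=0$ from $\diver\VVd\subseteq\Pk_{k-1}(\Omega_h)$, the error representation obtained by subtracting the discrete from the continuous momentum equation, term-by-term bounds mimicking the $T_2$–$T_4$ splittings plus a CIP Cauchy--Schwarz bound for $J_h$, and a final integration in time. The only small imprecision is the inline claim $\normacip{\vv_h}\lesssim\gzhinf\normaunoh{\vv_h}$, which should carry an extra $h^{1/2}$ (as in the paper's bound $\normacip{\ww_h}^2\lesssim\max\{\Vert\BB_h\Vert_{\LL^\infty(\Omega)}^2,1\}\,h$), but your final assembly already accounts for the correct $\gzhinf^2 h$ factor, so this does not affect the argument.
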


\begin{proof}
%
Combining the classical inf-sup arguments for the BDM element in \cite{boffi-brezzi-fortin:book} with the Korn inequality in \cite{korn} and the Poincar\'e inequality for piecewise regular functions \cite[Corollary 5.4]{DiPietro}, for a.e. $t \in I$ there exists $\ww_h \in \VVd$ such that
\begin{equation}
\label{eq:p-infsup}
\Vert \ww_h \Vert_{1,h} \lesssim 1 \,, 
\qquad \text{and} \qquad
\Vert p_h - \Pi_{k-1} p\Vert 
\lesssim b(\ww_h \,, p_h - \Pi_{k-1} p) \,.
\end{equation}
Furthermore, recalling the definition of $L^2$-projection operator, being $\diver(\VVd) \subseteq \Pk_{k-1}(\Omega_h)$, we infer
\begin{equation}
\label{eq:p-orth}
b(\ww_h \,, \Pi_{k-1}p) =
b(\ww_h \,,  p) \,.
\end{equation}
Therefore, combining \eqref{eq:p-infsup} and \eqref{eq:p-orth} with Problems \eqref{eq:variazionale} and \eqref{eq:fem-3f} (equivalently \eqref{eq:fem-4f}),
 for a.e. $t \in I$ we infer
\begin{equation}
\label{eq:p-error}
\begin{aligned}
&\Vert p - p_h\Vert 
\lesssim
\Vert p_h -  \Pi_{k-1} p\Vert +  \Vert p -  \Pi_{k-1} p\Vert 
\lesssim
b(\ww_h \,, p_h - p)  +  \Vert p -  \Pi_{k-1} p\Vert 
\\
& =
\ns \bigl( \as(\uu, \ww_h) - \ash(\uu_h, \ww_h) \bigr) +
\bigl( c(\uu; \uu, \ww_h) - c_h(\uu_h; \uu_h, \ww_h) \bigr) + 
\\
&+ \bigl( d(\BB_h; \BB_h, \ww_h) - d(\BB; \BB, \ww_h) \bigr) 
- J_h(\BB_h; \uu_h, \ww_h) + \left(\partial_t (\uu - \uu_h), \ww_h \right) +  \Vert p -  \Pi_{k-1} p\Vert 
\\
& =: \sum_{i=1}^6 S_i \,.
\end{aligned}
\end{equation}
We preliminary notice that, employing again the Korn inequality  and the Poincar\'e inequality for piecewise regular functions, from bounds \eqref{eq:utile-jump} and \eqref{eq:p-infsup} we have
\begin{equation}
\label{eq:norme-wh}
\begin{aligned}
\Vert \ww_h \Vert^2 + 
\Vert \bnabla_h \ww_h \Vert^2 &\lesssim 1
\\
\normaupw{\ww_h}^2 &\lesssim 
 \Vert \uu_h\Vert_{\LL^\infty(\Omega)} \, h
\\
\normacip{\ww_h}^2 
& \lesssim 
\max \{\Vert \BB_h\Vert_{\LL^\infty(\Omega)}^2, 1\} \, h
\end{aligned}
\end{equation}
where we also used $\bdma^{-1} \lesssim 1$.
We now estimate separately each term in the sum \eqref{eq:p-error}.

\noindent
$\bullet$ \, Estimate of $S_1$: recalling the regularity assumptions \textbf{(RA1-3f)} and \textbf{(RA1-4f)} we infer 
\begin{equation*}
\label{eq:S-1-0}
\begin{aligned}
S_1 &=  
\ns \biggl( (\beps_h(\uu - \uu_h) ,\, \beps_h(\ww_h)) +
\bdma \sum_{f \in \Edges} h_f^{-1} (\jump{\uu - \uu_h}_f ,\,\jump{\ww_h}_f)_f \biggr) +
\\
& - \ns \sum_{f \in \Edges}  (\media{\beps_h(\uu - \uu_h)\nn_f}_f ,\, \jump{\ww_h}_f)_f   
- \ns \sum_{f \in \Edges}
(\jump{\uu - \uu_h}_f ,\, \media{\beps_h(\ww_h) \nn_f}_f)_f
\\
& =: S_{1,1} + S_{1,2}  + S_{1,3}  
\end{aligned}
\end{equation*}
The Cauchy-Schwarz inequality  and \eqref{eq:p-infsup} yield
\begin{equation*}
\label{eq:S-1-1}
S_{1,1} \leq \ns \normaunoh{\uu - \uu_h} \, \normaunoh{\ww_h} \lesssim
\ns \normaunoh{\uu - \uu_h} \,.
\end{equation*}
The term $S_{1,2}$ can be bounded as follows
\begin{equation}
\label{eq:S-1-2}
\begin{aligned}
& S_{1,2} \lesssim 
\ns 
\biggl( \sum_{f \in \Edges}h_f  \Vert \media{\beps_h(\uu - \uu_h) \nn_f}_f \Vert_f^2\biggr)^{1/2}
\biggl( \bdma \sum_{f \in \Edges} h_f^{-1}\Vert \jump{\ww_h}_f \Vert_f^2 \biggr)^{1/2}
\\
&\begin{aligned}
& \lesssim 
\ns  
\biggl( \sum_{E \in \Omega_h} \Vert \beps_h(\uu - \uu_h) \Vert_E^2 +
\sum_{E \in \Omega_h} h_E^2 \vert \beps_h(\uu - \uu_h) \vert_{1,E}^2
 \biggr)^{1/2}
& & \text{(\eqref{eq:utile-jump1} \& \eqref{eq:p-infsup})}
\\
& \lesssim 
\ns 
\biggl( \normaunoh{\uu - \uu_h}^2 +
\sum_{E \in \Omega_h} h_E^2 \vert \beps_h (\eeh) \vert_{1,E}^2 +
\sum_{E \in \Omega_h} h_E^2 \vert \beps_h (\eei) \vert_{1,E}^2
 \biggr)^{1/2}
& & \text{(tri. ineq.)}
 \\
 & \lesssim 
\ns 
\biggl( \normaunoh{\uu - \uu_h}^2 +
\sum_{E \in \Omega_h} \Vert \beps_h (\eeh) \Vert_{E}^2 +
\sum_{E \in \Omega_h} h_E^{2k} \vert \uu \vert_{k+1,E}^2
 \biggr)^{1/2}
& &\text{(Lm. \ref{lm:inverse} \& \eqref{eq:int-v})} 
 \\
 & \lesssim 
\ns  
\biggl( \normaunoh{\uu - \uu_h}^2 +
h^{2k} \vert \uu \vert_{k+1,\Omega_h}^2
\biggr)^{1/2} \, .
& & \text{(tri. ineq.)}
\end{aligned}
\end{aligned}
\end{equation}
For the term $S_{1,3}$, employing \eqref{eq:utile-jump} and bound \eqref{eq:p-infsup} we deduce
\begin{equation*}
\label{eq:S-1-3}
\begin{aligned}
S_{1,3}  &\lesssim 
\ns  \biggl( \bdma \sum_{f \in \Edges} h_f^{-1}\Vert \jump{\uu - \uu_h}_f \Vert_f^2 \biggr)^{1/2}
\biggl( \sum_{f \in \Edges}h_f  \Vert \media{\beps_h(\ww_h) \nn_f}_f \Vert_f^2\biggr)^{1/2}
\\
& \lesssim 
\ns \normaunoh{\uu - \uu_h}
\biggl( \sum_{E \in \Omega_h}  \Vert \beps_h(\ww_h)\Vert_E^2\biggr)^{1/2} 
\lesssim 
\ns  \normaunoh{\uu - \uu_h} \,.
\end{aligned}
\end{equation*}
Then we conclude
\begin{equation}
\label{eq:S-1}
S_1 \lesssim 
\ns  
\biggl( \normaunoh{\uu - \uu_h}^2 +
h^{2k} \vert \uu \vert_{k+1,\Omega_h}^2
\biggr)^{1/2} \,.
\end{equation}

\noindent
$\bullet$ \, Estimate of $S_2$: Using analogous argument to  that in \eqref{eq:T20} we infer
\begin{equation*}
\label{eq:S-2-0}
\begin{aligned}
S_2 
& = 
\biggl( (( \bnabla \uu ) \, (\uu - \uu_h), \, \ww_h )  
- (( \bnabla_h \ww_h ) \, \uu_h, \, \uu - \uu_h) \biggr) +
\\
& \quad  
+ \sum_{f \in \EdgesI} ( (\uu_h\cdot \nn_f) \jump{\ww_h}_f ,\, \media{\uu - \uu_h}_f)_f 
+ \sum_{f \in \EdgesI}
( |\uu_h\cdot \nn_f| \jump{\uu - \uu_h}_f ,\,  \jump{\ww_h}_f)_f 
\\
&=: 
S_{2,1} + S_{2,2}  + S_{2,3}\,. 
\end{aligned}
\end{equation*}
The Cauchy-Schwarz inequality and the first bound in \eqref{eq:norme-wh} imply
\begin{equation*}
\label{eq:S-2-1}
\begin{aligned}
S_{2,1} &\leq \Vert \uu - \uu_h\Vert 
( \Vert \uu \Vert_{\WW^1_\infty(\Omega_h)} \, \Vert  \ww_h\Vert +  
\Vert \uu_h \Vert_{\LL^\infty(\Omega_h)} \, \Vert \bnabla_h \ww_h\Vert )
\\
&\lesssim 
( \Vert \uu \Vert_{\WW^1_\infty(\Omega_h)} +  
\Vert \uu_h \Vert_{\LL^\infty(\Omega_h)} )
\Vert \uu - \uu_h\Vert \,. 
\end{aligned}
\end{equation*}
The term $S_{2,2}$ can be bounded using analogous arguments to that in \eqref{eq:S-1-2}:
\begin{equation*}
\label{eq:S-2-2}
\begin{aligned}
S_{2,2} &\lesssim
\Vert \uu_h\Vert_{\LL^\infty(\Omega)}  
\biggl( \sum_{f \in \EdgesI} h_f \Vert \media{\uu - \uu_h}_f\Vert_f^2 \biggr)^{1/2}
\biggl( \bdma \sum_{f \in \EdgesI} h_f^{-1}\Vert \jump{\ww_h}_f\Vert_f^2 \biggr)^{1/2}
\\
&\lesssim
\Vert \uu_h\Vert_{\LL^\infty(\Omega)}  
\biggl(\Vert \uu - \uu_h\Vert^2 +
h^{2k+2} \vert \uu \vert_{k+1,\Omega_h}^2
\biggr)^{1/2} \,.
\end{aligned}
\end{equation*}
Concerning the term $S_{2,3}$, employing \eqref{eq:norme-wh} we have
\begin{equation*}
\label{eq:S-2-3}
S_{2,3} \lesssim  
\normaupw{\ww_h} 
\, \normaupw{\uu - \uu_h}
\lesssim 
\Vert \uu_h\Vert_{\LL^\infty(\Omega)}^{1/2} \, h^{1/2}    \, \normaupw{\uu - \uu_h}  \,.
\end{equation*}
Hence we get
\begin{equation}
\label{eq:S-2}
\begin{aligned}
S_2 &\lesssim
( \Vert \uu \Vert_{\WW^1_\infty(\Omega_h)} +  
\Vert \uu_h \Vert_{\LL^\infty(\Omega_h)}) 
\Vert \uu - \uu_h\Vert 
+
\\
& + \Vert \uu_h\Vert_{\LL^\infty(\Omega)}^{1/2}  h^{1/2}  \,   \normaupw{\uu - \uu_h} +
\Vert \uu_h\Vert_{\LL^\infty(\Omega)}  
h^{k+1} \vert \uu \vert_{k+1,\Omega_h} \,. 
\end{aligned}
\end{equation}

\noindent
$\bullet$ \, Estimate of $S_3$: Employing similar computations to that in \eqref{eq:T30} we derive

\begin{equation*}
\label{eq:S-3-0}
\begin{aligned}
S_3 &= 
\bigl( d(\BB_h - \BB; \BB, \ww_h) + 
(\BB - \BB_h, \bcurl_h(\ww_h \times \BB_h)) \bigr) +
\\
& 
+ \sum_{f \in \EdgesI}(\jump{\ww_h}_f \times \BB_h, (\BB - \BB_h) \times \nn_f)_f 
=: S_{3,1} + S_{3,2} \,.
\end{aligned}
\end{equation*}
From the Cauchy-Schwarz inequality and \eqref{eq:norme-wh} we infer
\begin{equation*}
\label{eq:S-3-1}
\begin{aligned}
S_{3,1} &\leq 
\Vert \BB - \BB_h\Vert 
\biggl( \Vert \bnabla \BB \Vert_{\LL^\infty(\Omega_h)} \Vert \ww_h\Vert + 
\Vert \bnabla_h \ww_h \Vert \, \Vert \BB_h \Vert_{\LL^\infty(\Omega)} +
\Vert \bnabla \BB_h \Vert_{\LL^\infty(\Omega_h)} \Vert \ww_h\Vert
\biggr)
\\
&\lesssim 
(\Vert \BB \Vert_{\WW^1_\infty(\Omega_h)} + 
\Vert \BB_h \Vert_{\WW^1_\infty(\Omega_h)})
 \Vert \BB - \BB_h\Vert \,.
\end{aligned}
\end{equation*}
Using again the similar computations to that in \eqref{eq:S-1-2} we get
\begin{equation*}
\label{eq:S-3-2}
\begin{aligned}
S_{3,2} &\lesssim
\Vert \BB_h\Vert_{\LL^\infty(\Omega)}  
\biggl( \sum_{f \in \EdgesI} h_f \Vert \BB - \BB_h\Vert_f^2 \biggr)^{1/2}
\biggl( \bdma\sum_{f \in \EdgesI} h_f^{-1}\Vert \jump{\ww_h}_f\Vert_f^2 \biggr)^{1/2}
\\
&\lesssim
\Vert \BB_h\Vert_{\LL^\infty(\Omega)}  
\biggl(\Vert \BB - \BB_h\Vert^2 +
h^{2k+2} \vert \BB \vert_{k+1,\Omega_h}^2
\biggr)^{1/2} \,.
\end{aligned}
\end{equation*}
Therefore we obtain
\begin{equation}
\label{eq:S-3}
S_3 \lesssim 
(\Vert \BB \Vert_{\WW^1_\infty(\Omega_h)} + 
\Vert \BB_h \Vert_{\WW^1_\infty(\Omega_h)})
 \Vert \BB - \BB_h\Vert 
+
\Vert \BB_h\Vert_{\LL^\infty(\Omega)}  
h^{k+1}
\vert \BB \vert_{k+1,\Omega_h} \,.
\end{equation}

\noindent
$\bullet$ \, Estimate of $S_4$: employing the regularity assumptions \textbf{(RA2-3f)} and \textbf{(RA2-4f)} and \eqref{eq:norme-wh} we have
\begin{equation}
\label{eq:S-4}
\begin{aligned}
S_4 
& \leq 
\normacip{\uu - \uu_h} \, \normacip{\ww_h}
\lesssim 
\max \{\Vert \BB_h\Vert_{\LL^\infty(\Omega)}, 1\} \, h^{1/2} \, \normacip{\uu - \uu_h} 
\,.
\end{aligned}
\end{equation}

\noindent
$\bullet$ \, Estimate of $S_5$ and $S_6$: by definition of dual norm and Lemma \ref{lm:bramble} we get
\begin{equation}
\label{eq:S-5-6}
S_5 + S_6 \lesssim 
\left \Vert \partial_t (\uu - \uu_h) \right\Vert_{{\VVd}'} +  \vert p \vert_{k, \Omega_h} h^{k} \,.
\end{equation}
Combining equations \eqref{eq:S-1} --\eqref{eq:S-5-6} in \eqref{eq:p-error} we finally obtain 
\begin{equation*}
\label{eq:rho_h}
\begin{aligned}
&\Vert p - p_h \Vert^2 \lesssim
\\
&
(\Vert \uu\Vert_{\WW^1_\infty(\Omega_h)}^2 +\Vert \uu_h\Vert_{\LL^\infty(\Omega_h)}^2 ) 
\Vert \uu - \uu_h \Vert^2  +
(\Vert \BB\Vert_{\WW^1_\infty(\Omega_h)}^2 +\Vert \BB_h\Vert_{\WW^1_\infty(\Omega_h)}^2 )  \Vert \BB - \BB_h \Vert^2  +
\\
&
(\ns + \Vert \uu_h\Vert_{\LL^\infty(\Omega)} h + \max \{\Vert \BB_h\Vert_{\LL^\infty(\Omega)}^2, 1\} h) \normastab{\uu - \uu_h}^2 +
\Vert \partial_t (\uu - \uu_h) \Vert_{{\VVd}'}^2 + 
\\
&
+ \biggl( 
(\ns^2 + \Vert \uu_h\Vert_{\LL^\infty(\Omega)}^2 h^2) \vert \uu \vert_{k+1, \Omega_h}^2 +
\Vert \BB_h\Vert_{\LL^\infty(\Omega)}^2 h^2 \vert \BB \vert_{k+1, \Omega_h}^2 +
\vert p \vert_{k, \Omega_h}^2
\biggr) h^{2k}
\end{aligned}
\end{equation*}
The proof follows integrating the previous bound over $(0, T)$.
\end{proof}

\begin{remark} \label{rm:pressure}
We observe that bound \eqref{eq:conv-pressione}
is independent of  the inverse of the viscosity parameters $\ns$ and $\nm$
and hinges on four terms.
The first term represents the standard interpolation error.
The term $\left \Vert \partial_t (\uu - \uu_h) \right\Vert_{L^2(0, T; {\VVd}')}^2$ expresses the approximation error in the time derivative of the velocities. Deducing an explicit bound for that term is not trivial and beyond the scopes of this contribution.
A possible attempt to estimate the time derivative of the velocity can be found in \cite[Theorem 3]{BF:2007}. The proposed estimate  is suboptimal for $k\geq 2$ polynomial degree.
The third term, in the convection dominated regime, has higher asymptotic order, indeed using Lemma \ref{lm:inverse} and Lemma \ref{lm:bramble}, from Proposition \ref{prp:conv} and Proposition \ref{prp:convf}, it holds
\[
\begin{aligned}
\ns^2 + \gzhinf^2 h^2 & \lesssim 
\ns^2 + \gunoinf^2 h^2 + \left( \Vert \uu - \uu_h\Vert_{L^\infty(0, T; \LL^2(\Omega))}^2  + \Vert \BB - \BB_h\Vert_{L^\infty(0, T; \LL^2(\Omega))}^2 \right) h^{-1}
\lesssim 
 h \,. 
\end{aligned}
\]
The last term consists of the error of the velocities (cf. Proposition \ref{prp:conv} and Proposition \ref{prp:convf}) multiplied by a factor depending on the $\WW^1_\infty$ norm of the discrete solutions.
Notice that such factor can be easily bounded as follows
\[
\gunoinf^2 + \gunohinf^2 + \ns + \gzhinf^2 h
\lesssim 
\gunoinf^2 + \ns + \left( \Vert \uu - \uu_h\Vert_{L^\infty(0, T; \LL^2(\Omega))}^2 + \Vert \BB - \BB_h\Vert_{L^\infty(0, T; \LL^2(\Omega))}^2 \right) h^{-5} \,. 
\]
Employing Proposition \ref{prp:conv} and Proposition \ref{prp:convf}, we can conclude that the last term in \eqref{eq:conv-pressione} behaves as $h^{4k - 5}$  for the three-field scheme
and as $(h^2+\ns^2+\nm^2) h^{4k-5}$ for the four-field scheme. Therefore such last term recovers the $O(h^{2k})$ asymptotic behavior for $k \geq 3$.
\end{remark}


\section{Numerical experiments}\label{sec:num}

In this section we numerically analyze the behaviour of the proposed schemes, with particular focus on the robustness in convection-dominant regimes.
We denote the schemes of Sections~\ref{sub:scheme3f} and~\ref{sub:scheme4f} as \threeF{} and \fourF{}, respectively,
while \noStab{} is a scheme \emph{without} the stabilization forms $J_h$, $K_h$ and $Y_h$.

Since we would like to investigate specific aspects of the proposed schemes, we will consider different analytic solutions and error indicators, which will be properly specified in each subsection.
However, we will always consider the unit cube, $\Omega=(0,\,1)^3$, as a spacial domain, and $T=1$ as final time.

In the subsequent analysis, we will use a family of four Delaunay tetrahedral meshes with decreasing mesh size 
generated by~\texttt{tetgen}~\cite{Si:2015:TAD} to discretize the domain. 
We refer to these meshes as \texttt{mesh 1}, \texttt{mesh 2} (Figure~\ref{fig:mesh}), \texttt{mesh 3} and \texttt{mesh 4}.

\begin{figure}[!htb]
\centering 
\begin{tabular}{ccc}
\includegraphics[width=0.35\textwidth]{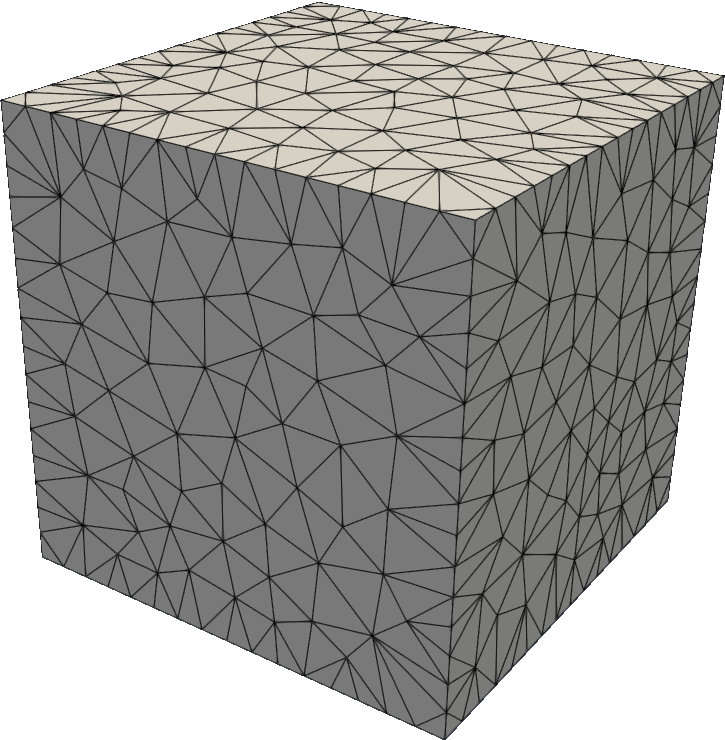}&
\qquad
\includegraphics[width=0.35\textwidth]{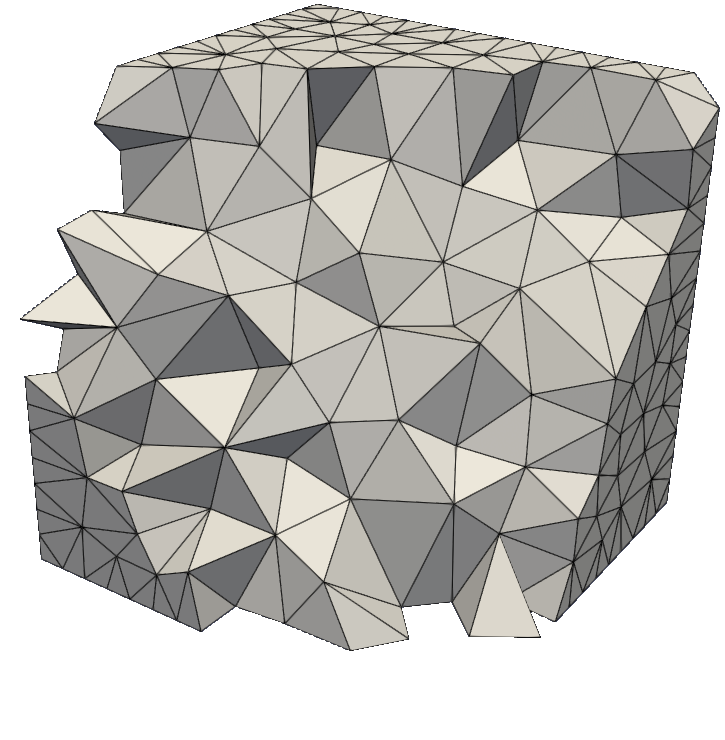}
\end{tabular}
\caption{One of the mesh used in the numerical experiments and a detail inside it.}
\label{fig:mesh}
\end{figure}

To deal with time discretization, we use an implicit Euler scheme.
In order to reduce as much as possible the error due to the integration in time,
we use a small starting time step size, $\tau$, for \texttt{mesh 1} (which will be specified case by case). 
Then, for the other meshes we halve such value, i.e., we use $0.5\tau$ for \texttt{mesh 2},
$0.25\tau$ for \texttt{mesh 3} and, finally,
$0.125\tau$ for \texttt{mesh 4}.

For each time step the nonlinear problem is solved using a fixed point strategy.
The velocity field and the magnetic field of the previous time step are used as a fist guess for these nonlinear iterations.

In the numerical tests, we always consider the lowest order case $k=1$.
Then, referring to Subsection \ref{sub:forms}, following~\cite{upwinding,DiPietro,MHD-linear}, we set $\bdma = 10$, $\bdmc = 1$, $\bdmJU = 5$ and $\bdmJD = 0.01$.
Additionally, since the forms $Y_h$ and $K_h$ have the same structure of the second term of the form $J_h$,
we set $\bdmY=\bdmK=0.01$.

\subsection{Example 1: a converge study}

In this subsection we are mainly interested in verifying that 
\threeF{} and \fourF{} exhibit the expected convergence rate under a convection-dominant regime.
To achieve this goal,
we fix $\nu=\nu_M=\nu_S$ and 
we consider the following set of values
$$
\nu = [\texttt{1.e-00}, \texttt{1.e-05}, \texttt{1.e-10}]\,.
$$
Notice that the first case, $\nu=\texttt{1.e+00}$, does not correspond to a convection dominant regime.
However, we consider also this case 
in order to ensure that the new stabilization forms do not affect the convergence of the solution in a diffusion dominant regime.

For all these values of $\nu$, we solve the MHD problem defined in Equation~\eqref{eq:primale},
where the right hand side and the boundary conditions are set in accordance with the exact solution 
\[
\begin{aligned}
\uu(x,\,y,\,z,\,t)&:= \left[\begin{array}{r}
                  \cos(0.25\pi t)\sin(\pi x)\cos(\pi y)\cos(\pi z)\\
                  \cos(0.25\pi t)\sin(\pi y)\cos(\pi z)\cos(\pi x)\\
                  -2\cos(0.25\pi t)\sin(\pi z)\cos(\pi x)\cos(\pi y)\\
                 \end{array}\right]\,,              \\
\BB(x,\,y,\,z,\,t)&:= \begin{bmatrix}
                  \cos(0.25\pi t)\sin(\pi y)\\
                  \cos(0.25\pi t)\sin(\pi z)\\
                  \cos(0.25\pi t)\sin(\pi x)
                 \end{bmatrix}\,,\\
\\                                  
p(x,\,y,\,z,\,t) &:= \cos(0.25\pi t)\left(\sin(x)+\sin(y)-2\sin(z)\right)\,.
\end{aligned}
\]

In this example, the coarser time step $\tau$ is set equal to $1/4$ which, due to the smoothness in time of the exact solution, is sufficiently small to ensure a negligible time discretization error.
For the schemes \threeF{} and \fourF{}, 
we will use the same error indicators for the velocity and pressure fields,
while the error indicator for the magnetic field will vary according to the scheme.
More specifically, we compute 
\[
\begin{aligned}
e_{\uu}&:=\|\uu(\cdot,T) - \uu_h(\cdot,T)\|_0 + \left(\int_0^T\|\uu(\cdot,t) - \uu_h(\cdot,t)\|_{\text{stab}}^2~\text{d}t\right)^{1/2}\,,\\
e_{p} &:=\left(\int_0^T\|p(\cdot,t)-p_h(\cdot,t)\|^2_0~\text{d}t\right)^{1/2}\,, \\
e_{\BB} &:= \|\BB(\cdot,T) - \BB_h(\cdot,T)\|_0 + \left(\int_0^T\|\BB(\cdot,t) - \BB_h(\cdot,t)\|_{M}^2~\text{d}t\right)^{1/2}\, , 
\end{aligned}
\]
for both \threeF{} and \fourF{}, where for the scheme \threeF{} we use 
\begin{equation*}
\begin{aligned}
& \|\ww\|_{M} := \nu_M \|\bnabla\ww\|^2 + \|\text{div}\ww\|^2 \,,
\end{aligned}
\end{equation*}
while for \fourF{} we adopt 
$$
\|\ww\|_{M} := \nu_M \|\bnabla\ww\|^2 + \mu_K\sum_{f \in \EdgesI} \text{max}\left\{1, \|\uu_h\|^2_{L^\infty(\omega_f)}\right\}h_f^2(\jump{\bnabla \ww}\,\jump{\bnabla \ww} \vert)_f\,.
$$
In Figure~\ref{fig:convExe1}, we collect the convergence lines for each scheme and error indicator.

Firstly, the stabilization forms added in the discretization of the problem do not affect the convergence rate of the discrete solution when $\nu=\texttt{1.e-00}$.
Indeed, in both schemes all the errors indicators decay as $O(h)$ as expected.

In a convective dominant regime, i.e., for $\nu=\texttt{1.e-05}$ and \texttt{1.e-10},
the schemes \threeF{} and \fourF{} exhibit exactly the same (pre-asymptotic) decay rates for the errors $e_p$ and $e_{\uu}$.
More specifically, for $e_{p}$ they both have a linear decay,
while we gain a factor of $1/2$ on the slope of the error $e_{\uu}$ for both schemes.
The trend of the error $e_{\BB}$ in a convective dominant regime is different.
Indeed, as it was predicted by the theory, if we consider the \threeF{} scheme, the decay is $O(h)$,
while the \fourF{} scheme gains a factor of $1/2$ also on the slope of $e_{\BB}$;
compare trend of the errors in the middle row in Figure~\ref{fig:convExe1}.

\begin{figure}[!htb]
\begin{center}
\begin{tabular}{cc}
\threeF &\fourF \\
\includegraphics[width=0.45\textwidth]{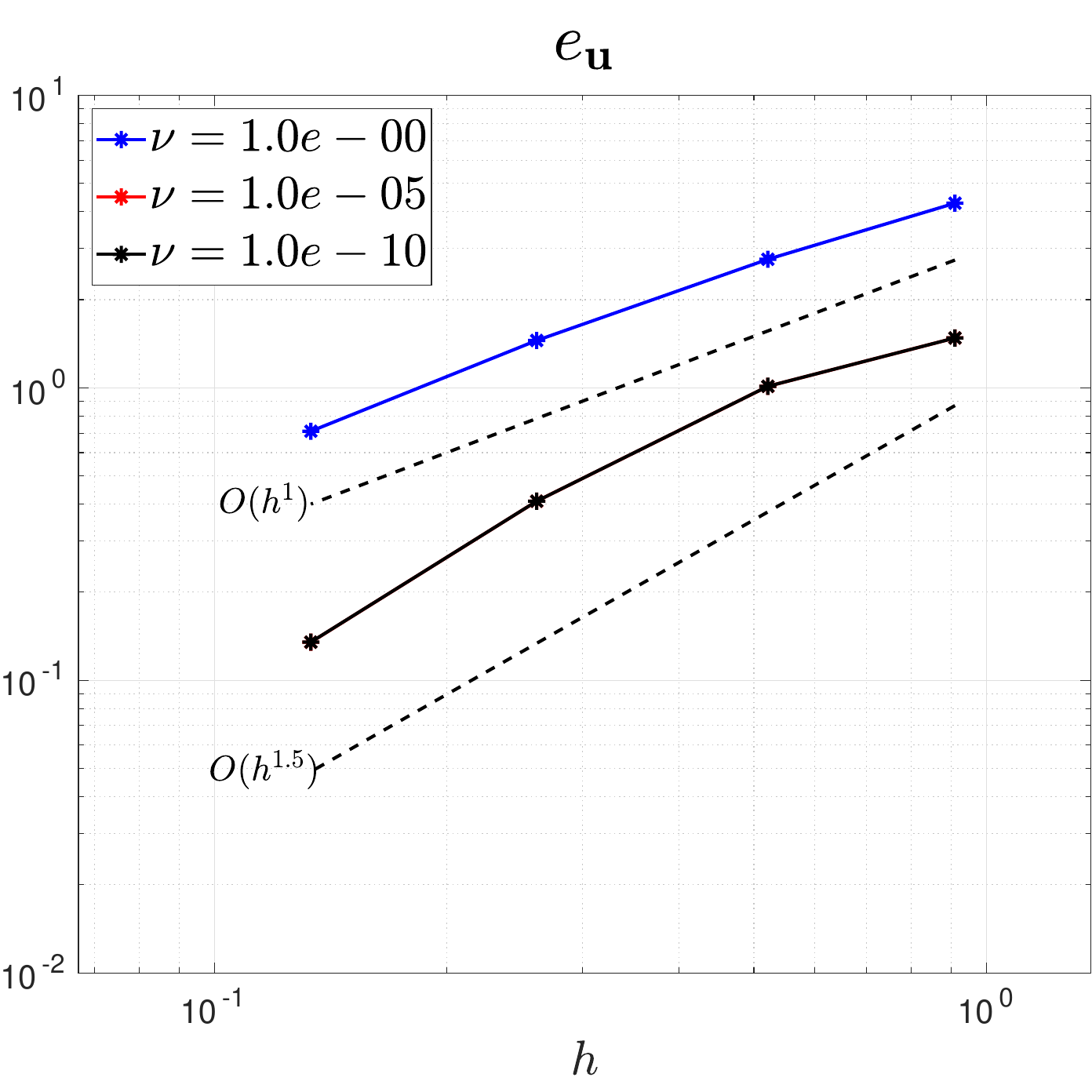} &\includegraphics[width=0.45\textwidth]{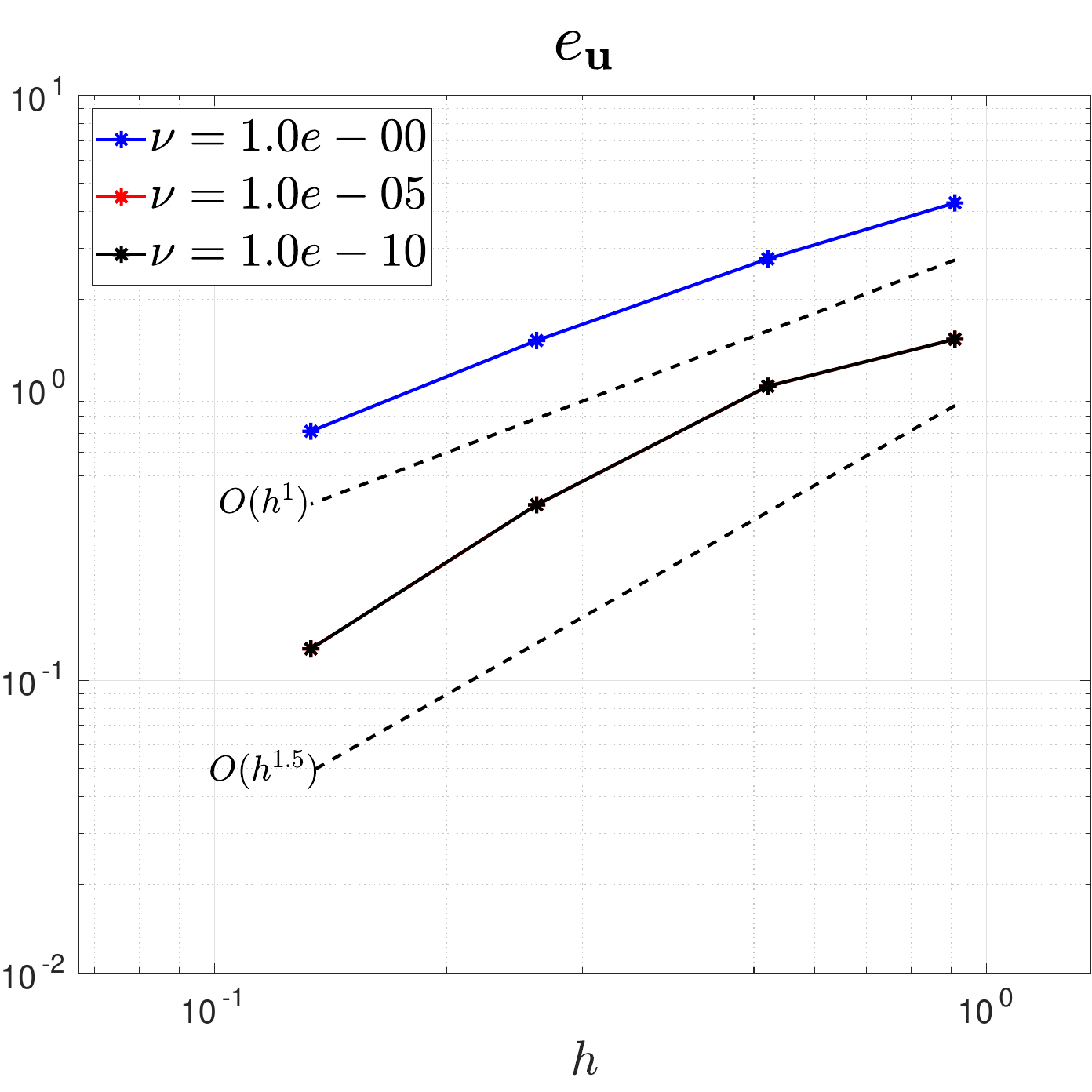}\\
\includegraphics[width=0.45\textwidth]{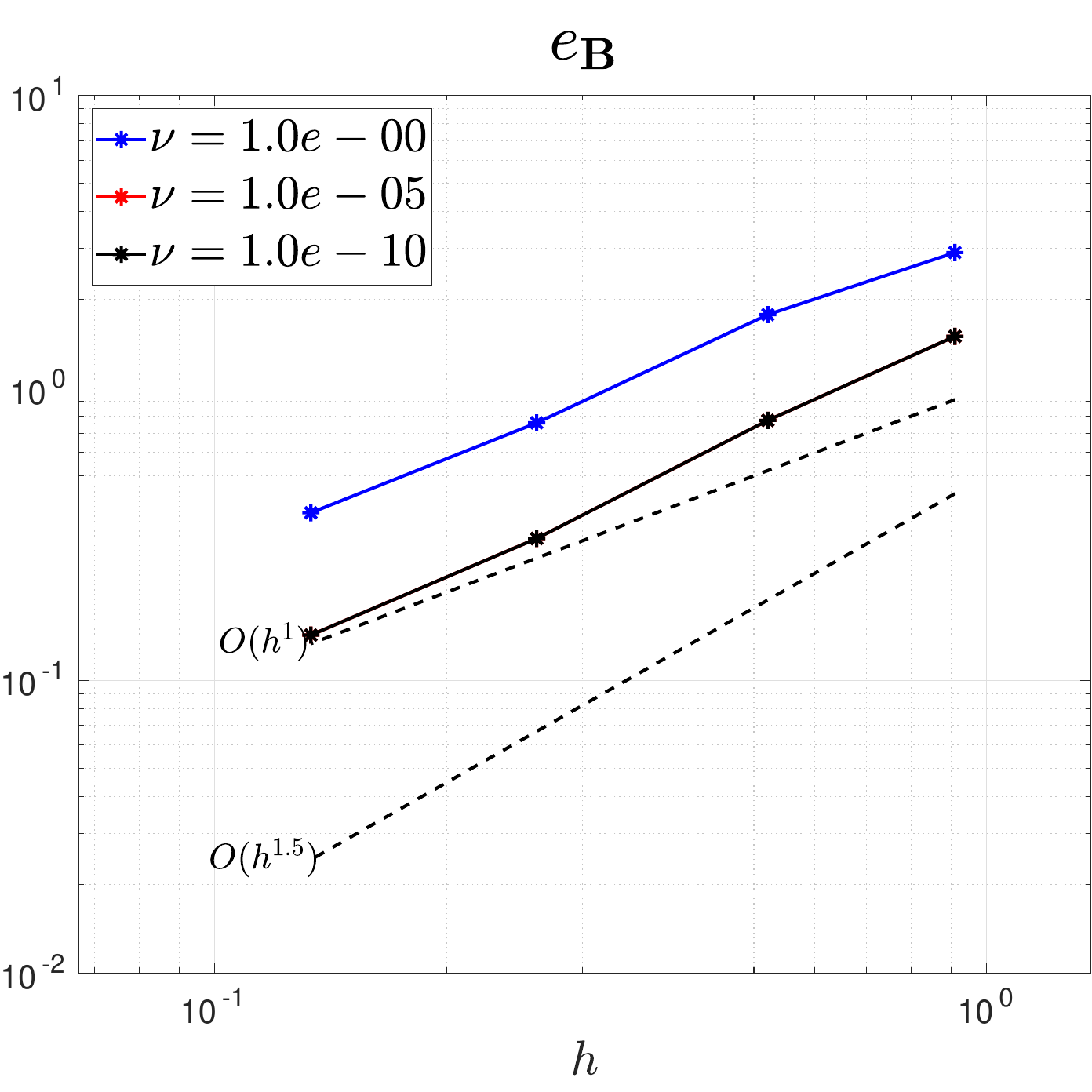} &\includegraphics[width=0.45\textwidth]{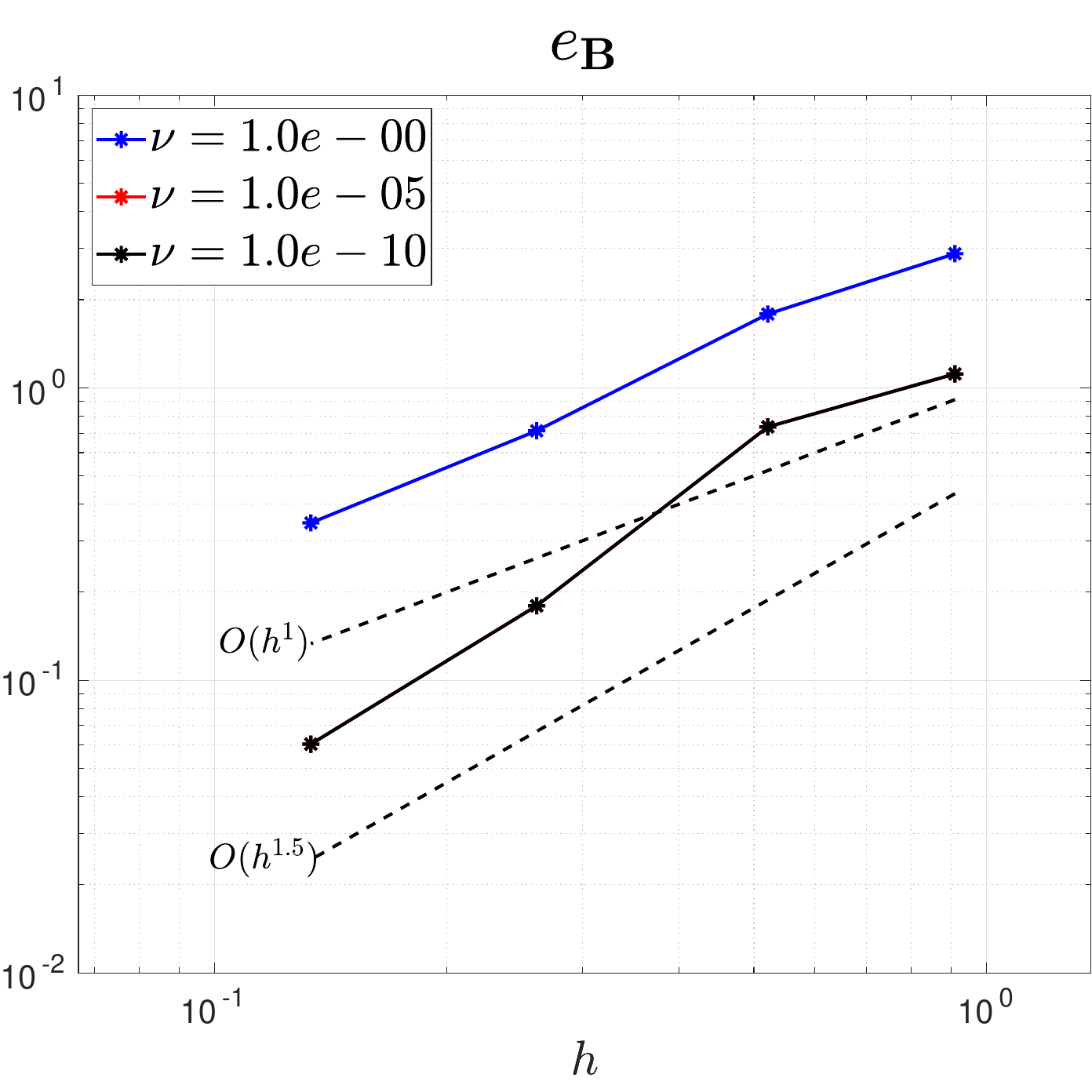}\\
\includegraphics[width=0.45\textwidth]{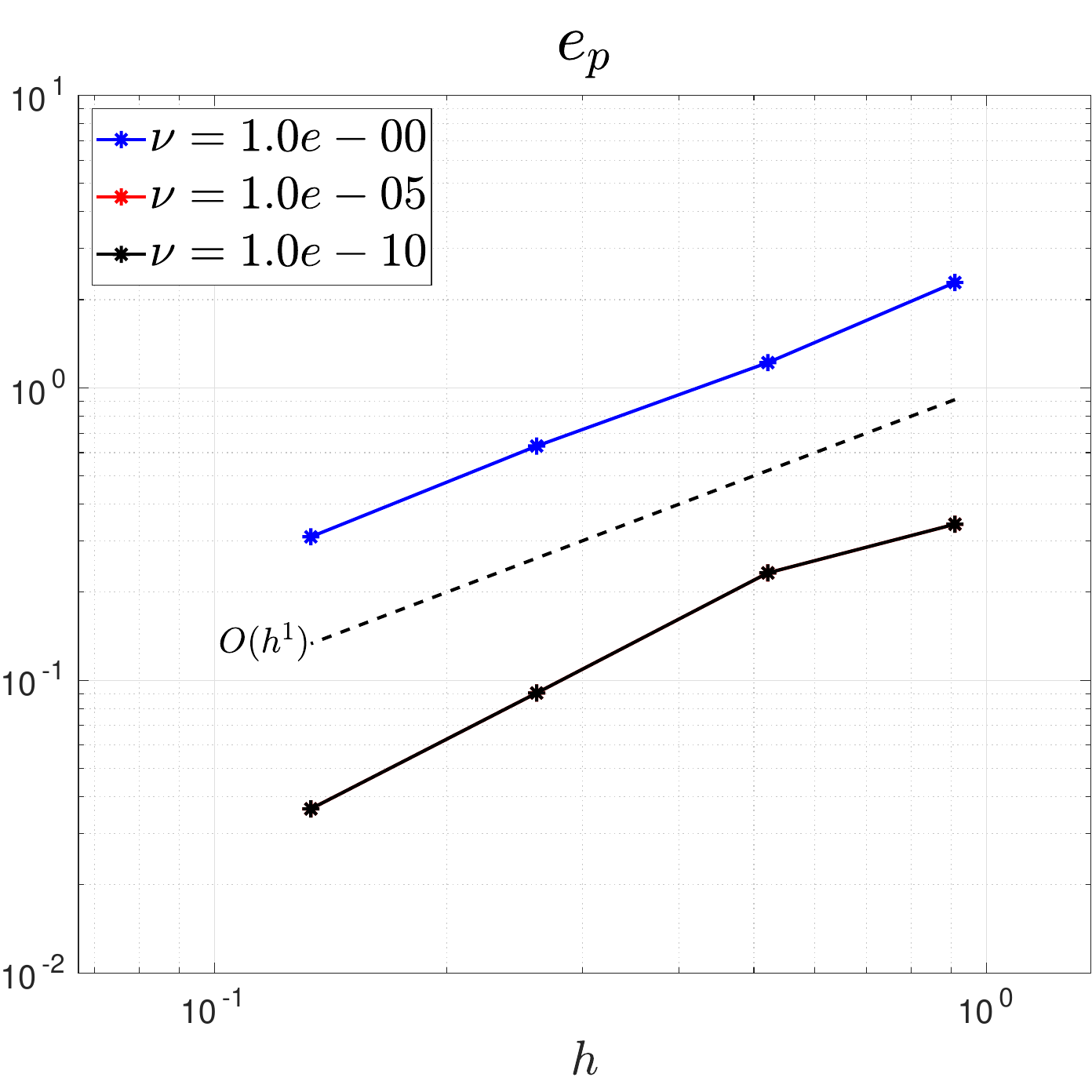} &\includegraphics[width=0.45\textwidth]{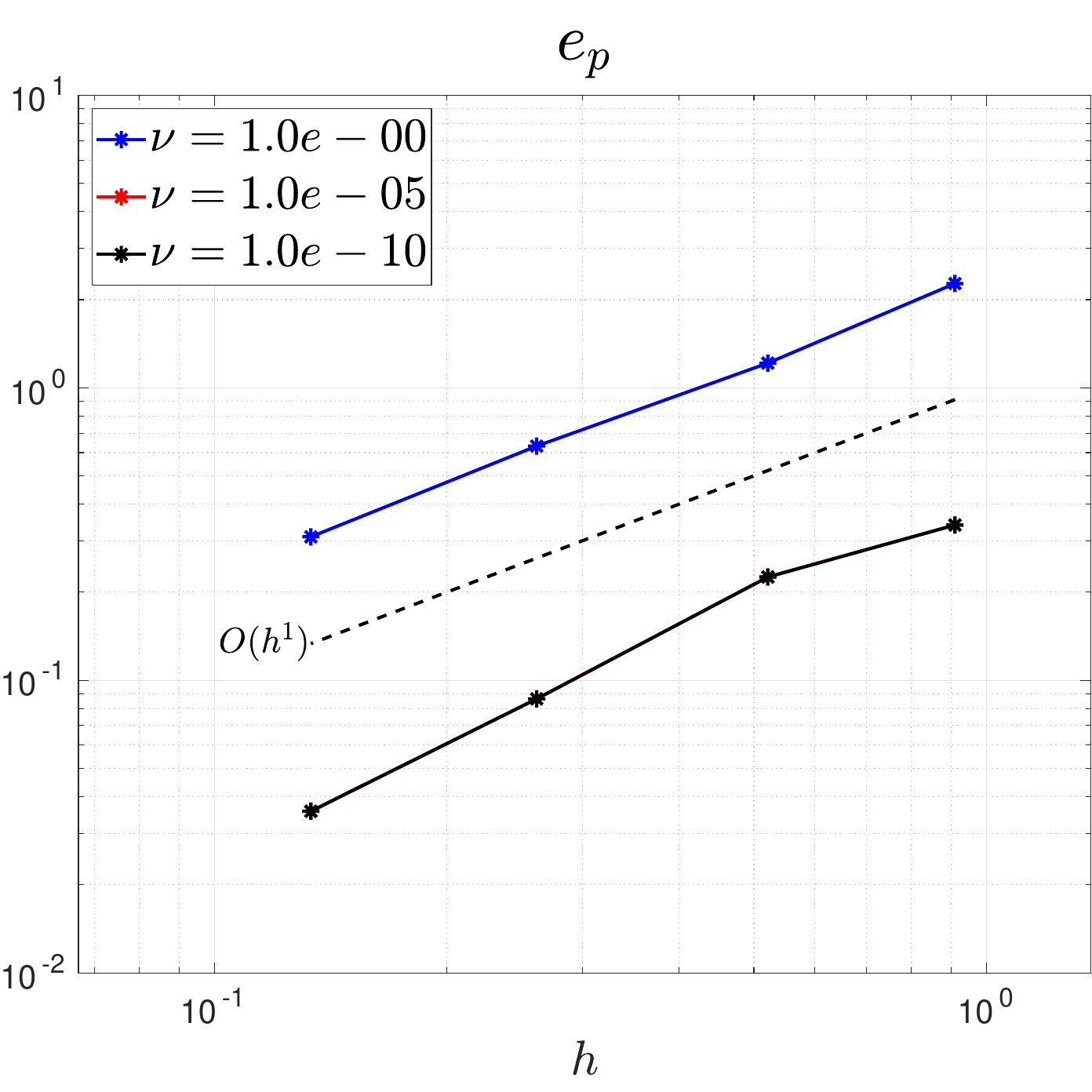}
\end{tabular}
\end{center}
\caption{Convergence histories of the schemes \eqref{eq:fem-3f} and \eqref{eq:fem-4f} with varying diffusive coefficients.}
\label{fig:convExe1}
\end{figure}

In order to have a clearer numerical evidence of the robustness of both schemes in a convection dominant regime, 
we compute all the error indicators keeping the same mesh and 
we vary the values of $\nu$ from \texttt{1.e-01} to \texttt{1.e-11}. 
In Figure~\ref{fig:varyNu}, we report these data for both \threeF{} and \fourF{} computed on \texttt{mesh 3}.
It is worth to notice that \emph{all} the error indicators remain nearly constant across different values of $\nu$.

\begin{figure}[!htb]
\begin{center}
\begin{tabular}{cc}
\includegraphics[width=0.4\textwidth]{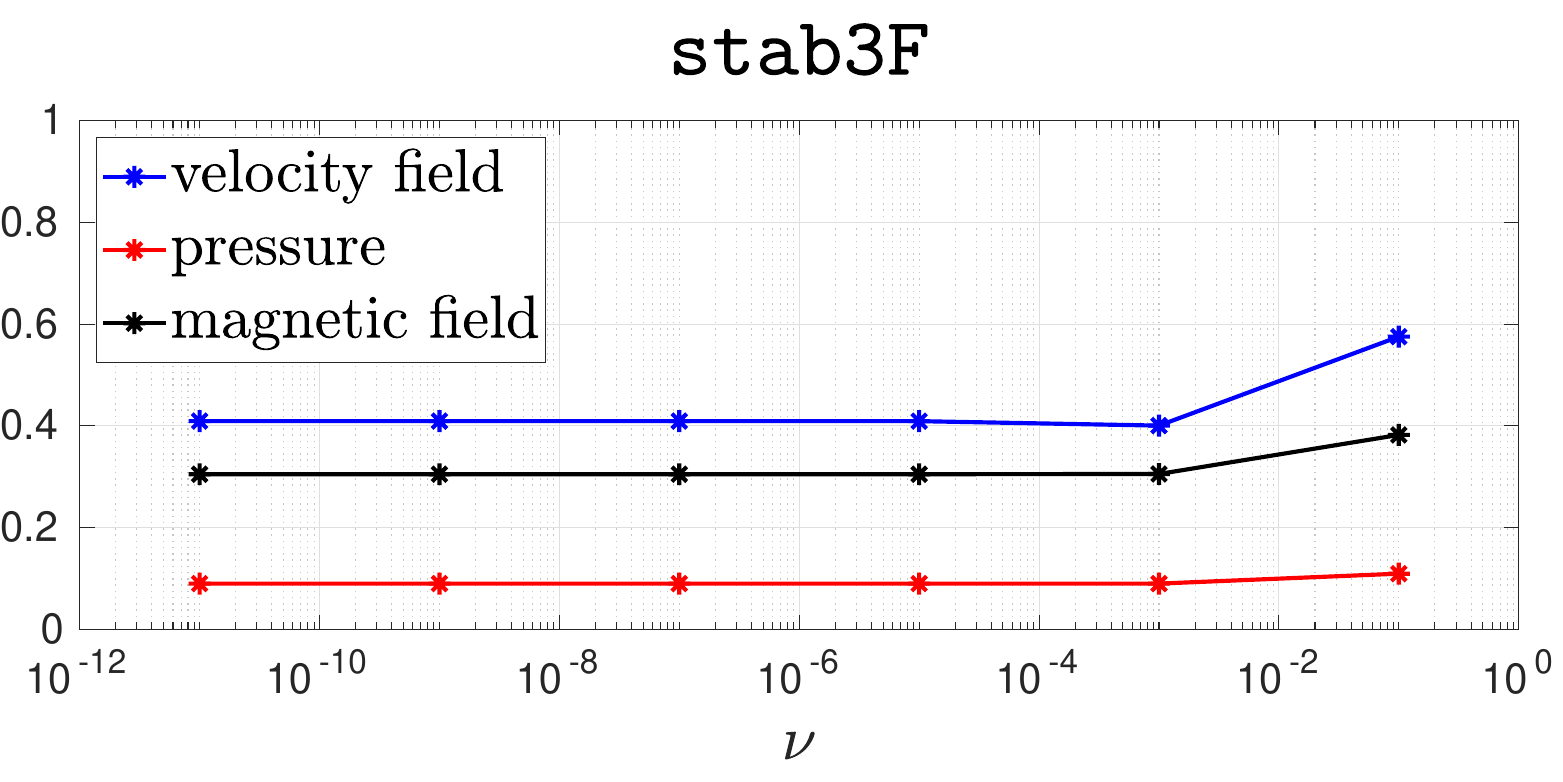} &\includegraphics[width=0.4\textwidth]{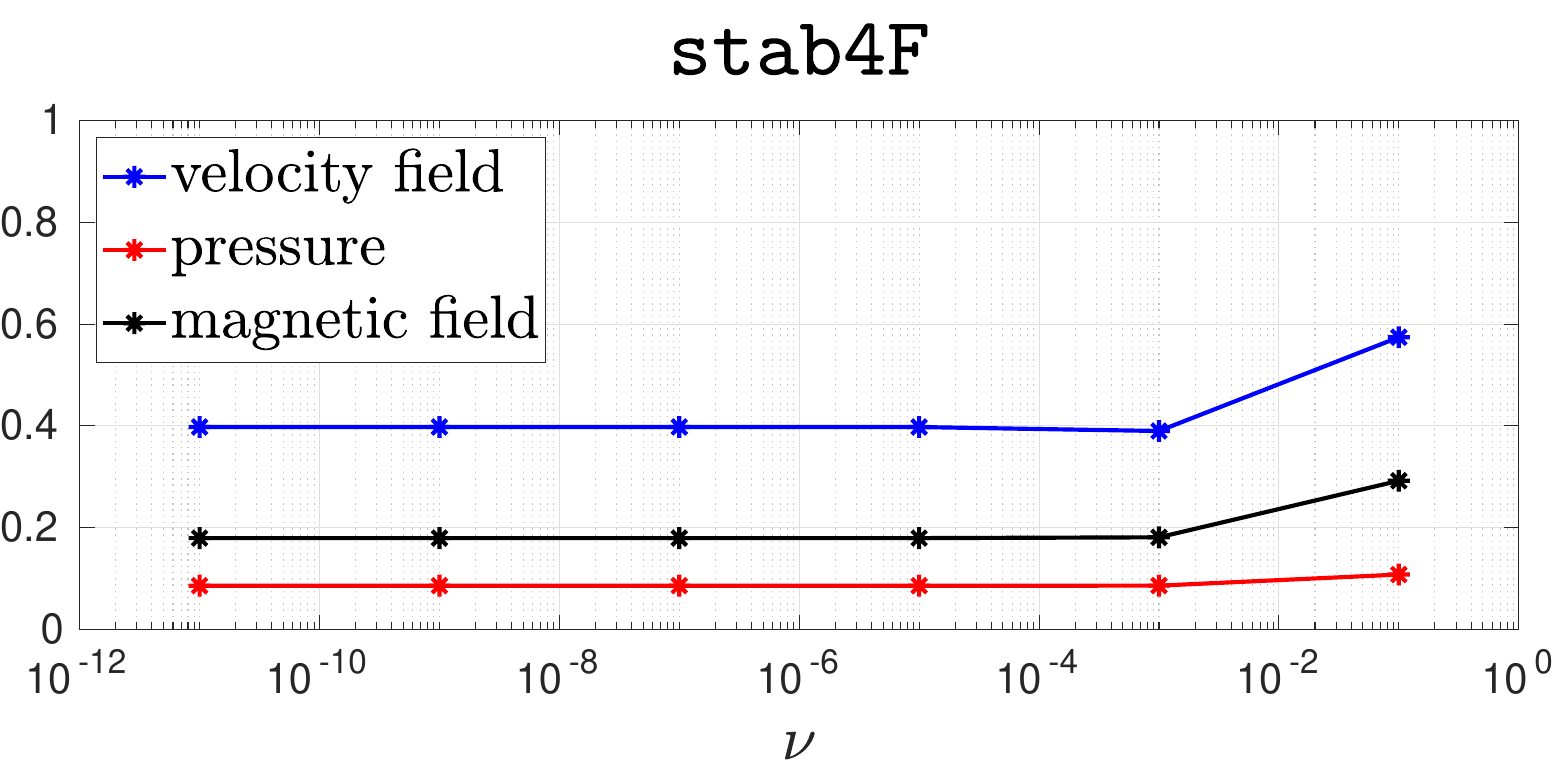}\\
\end{tabular}
\end{center}
\caption{Robustness analysis with respect to the diffusive coefficients.}
\label{fig:varyNu}
\end{figure}

\newpage

\subsection{Example 2: comparison among the three methods}

In this numerical section, we are going to compare the numerical solutions provided by the schemes \threeF{}, \fourF{}  and also a ``non stabilized'' method \noStab{}.
Such scheme \noStab{} corresponds to the \threeF{} method, but without the added stabilization form $J_h$ in \eqref{eq:J}.
Note that this \noStab{} method does benefit of the reliable upwind stabilization \eqref{eq:forme_d} for the fluid convective term, but lacks a specific convection stabilization related to the magnetic equations.

We solve the MHD equation in a convective dominant regime where $\nu_S=\texttt{1e-10}$ and $\nu_M=\texttt{1e-02}$,
with the right hand side and the boundary conditions set in accordance with the solution
\[
\begin{aligned}
\uu(x,\,y,\,z,\,t)&:= \left[\begin{array}{r}
                  t^6\sin(\pi x)\cos(\pi y)\cos(\pi z)\\
                  t^6\sin(\pi y)\cos(\pi z)\cos(\pi x)\\
                  -2t^6\sin(\pi z)\cos(\pi x)\cos(\pi y)\\
                 \end{array}\right]\,,              \\
\BB(x,\,y,\,z,\,t)&:= \begin{bmatrix}
                  \cos(0.25\pi t)\sin(\pi y)\\
                  \cos(0.25\pi t)\sin(\pi z)\\
                  \cos(0.25\pi t)\sin(\pi x)
                 \end{bmatrix}\,,\\
\\                                  
p(x,\,y,\,z,\,t) &:= \cos(0.25\pi t)\left(\sin(x)+\sin(y)-2\sin(z)\right)\,.
\end{aligned}
\]
Notice that pressure and magnetic fields are exactly the same as the previous example,
while the velocity field increases as a power of 6 in time, see the coefficient $t^6$ in front each component of $\uu$.
We made this choice of the exact solution in order to simulate the behaviour of a fluid flux 
that is initially at rest and rapidly accelerates towards the end of the simulation. From the mathematical standpoint, having a small velocity with respect to the magnetic field, at least initially, better helps to underline the usefulness of the magnetic stabilization (since we recall that also the \noStab{} scheme enjoys a jump stabilization endowed by the upwind discretization of the fluid convection).
To reduce as much as possible the error due to the time integration,
we set $\tau=1/64$.

In this numerical example, we will compute the following error indicators:
\[
\begin{aligned}
e_{\uu}&:=\|\uu(\cdot,T) - \uu_h(\cdot,T)\|_0 + \left(\int_0^T\ns \Vert \uu(\cdot,T)-\uu_h(\cdot,T)\Vert_{1,h}^2 + |\uu(\cdot,T)-\uu_h(\cdot,T)|_{\text{upw},\uu}^2 ~\text{d}t\right)^{1/2}\,,\\
e_{p} &:=\left(\int_0^T\|p(\cdot,t)-p_h(\cdot,t)\|^2_0~\text{d}t\right)^{1/2}\,,\\
e_{\BB}&:=\|\BB(\cdot,T) - \BB_h(\cdot,T)\|_0 + \left(\int_0^T\nu_M \|\nabla(\BB(\cdot,t) - \BB_h(\cdot,t))\|^2~\text{d}t\right)^{1/2}\,. 
\end{aligned}
\]
We have made this choice of error indicators because 
we aim to compute \emph{only} the norms and seminorms that are common to all the schemes: \noStab{}, \threeF{} and \fourF{}.

In Figure~\ref{fig:convExe2}, we present the convergence lines associated with the errors $e_{\uu}$ and $e_{\BB}$ for each scheme. As commented in \cite{Gerbeau} for a much simpler case, stabilizing the magnetic equations does lead to a more accurate velocity.
There is not much difference among the proposed schemes regarding the error $e_{p}$ so 
we do not show these converge lines.

\begin{figure}[!htb]
\begin{center}
\begin{tabular}{cc}
\includegraphics[width=0.45\textwidth]{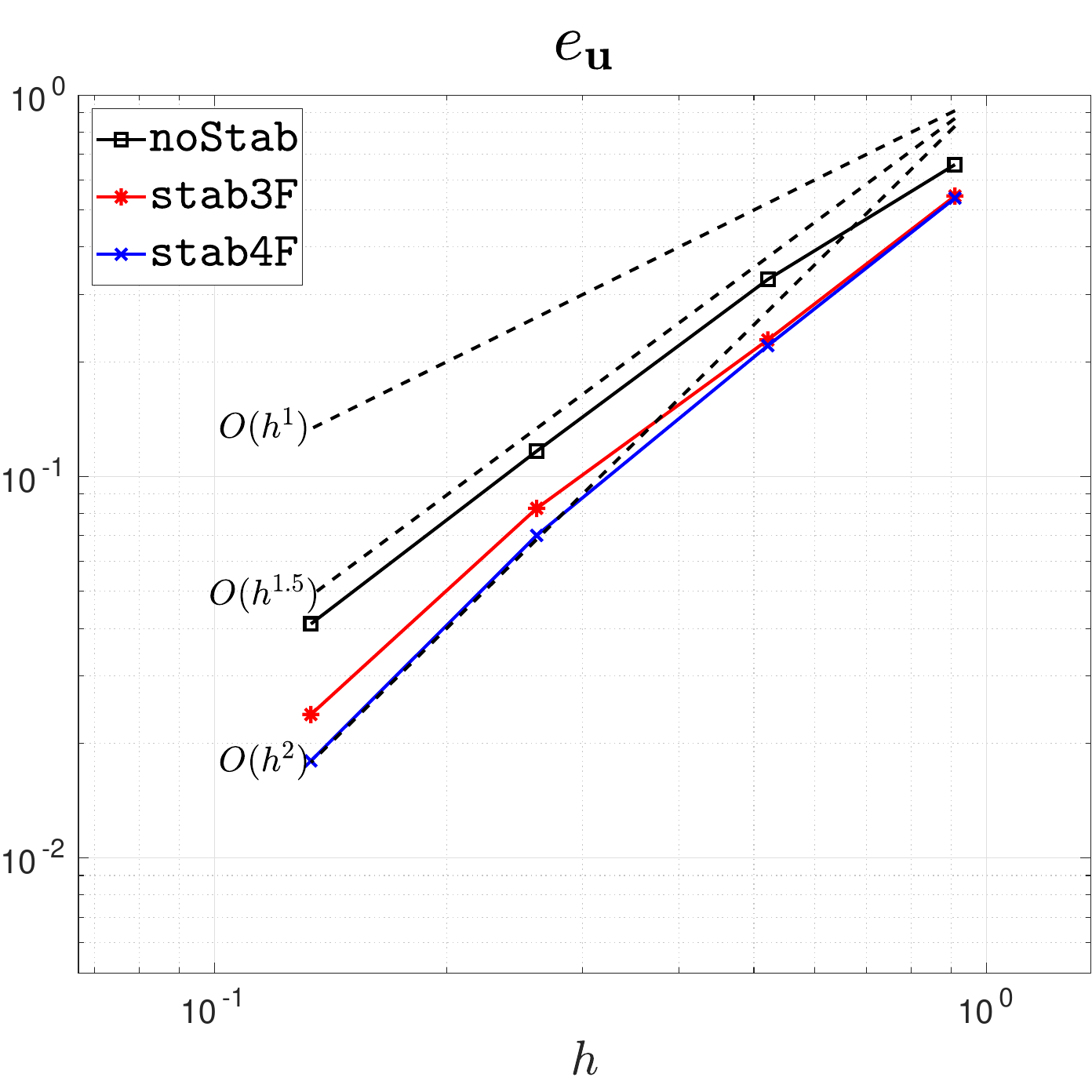} &\includegraphics[width=0.45\textwidth]{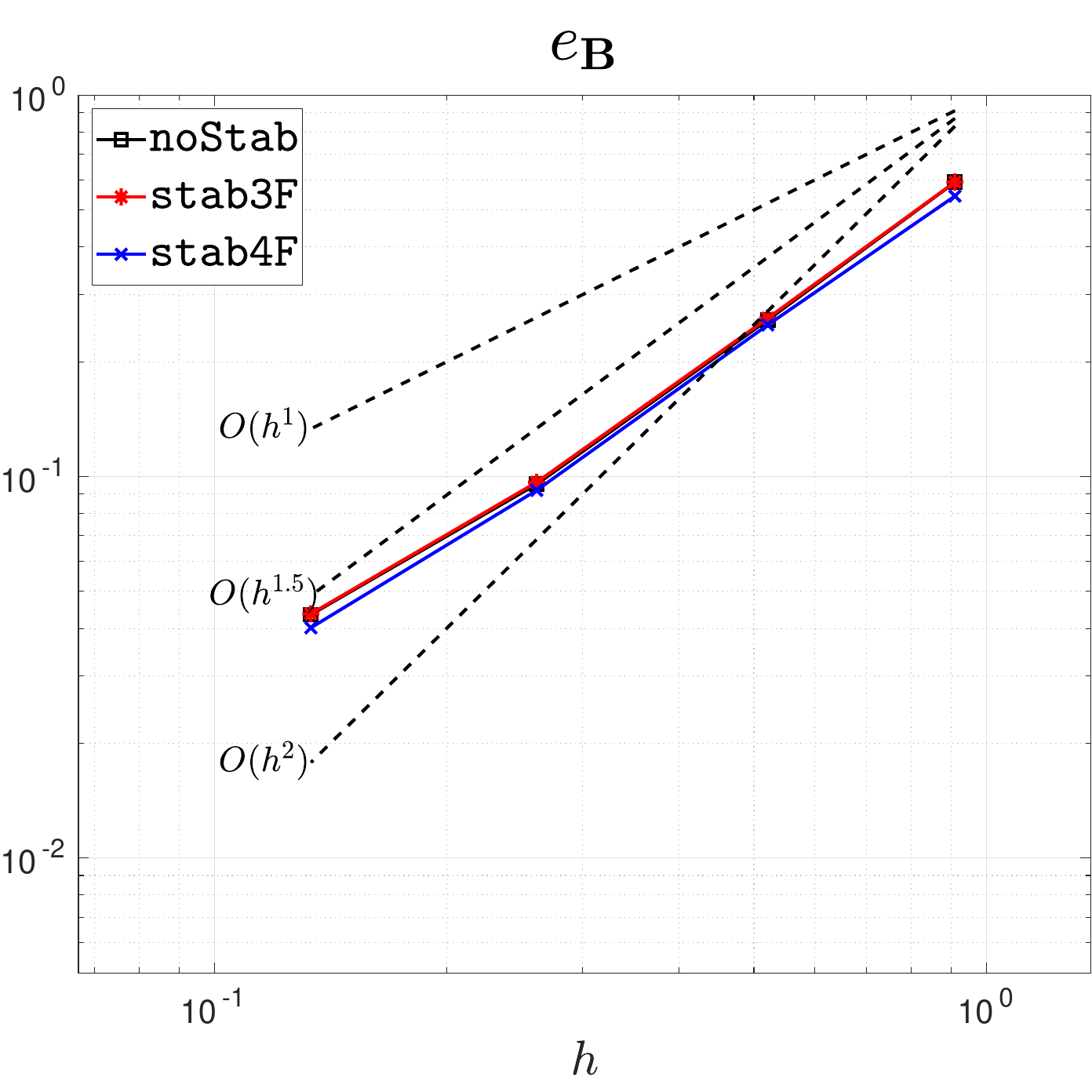}\\
\end{tabular}
\end{center}
\caption{
Convergence histories of the errors $e_{\uu}$ and $e_{\BB}$ with $\nu_S=\texttt{1e-10}$ and $\nu_M=\texttt{1e-02}$ varying the numerical scheme.}
\label{fig:convExe2}
\end{figure}
%


\bigskip\noindent
{\bf Acknowledgments}
\smallskip
LBdV and FD have been partially funded by the European Union (ERC, NEMESIS, project number 101115663). Views and opinions expressed are however those of the author(s) only and do not necessarily reflect those of the EU or the ERC Executive Agency.
All the three authors are members of the Gruppo Nazionale Calcolo Scientifico-Istituto Nazionale di Alta Matematica (GNCS-INdAM).

\newpage

\addcontentsline{toc}{section}{\refname}
\bibliographystyle{plain}
\bibliography{references}
\end{document}